\let\cal\mathcal
\def\Bscr{{\cal B}}
\def\Dscr{{\cal D}}
\def\Escr{{\cal E}}
\def\Fscr{{\cal F}}
\def\Gscr{{\cal G}}
\def\Kscr{{\cal K}}
\def\Lscr{{\cal L}}
\def\Mscr{{\cal M}}
\def\Nscr{{\cal N}}
\def\Oscr{{\cal O}}
\def\Tscr{{\cal T}}
\let\blb\mathbb
\def \ZZ{{\blb Z}}
\def \HH{{\blb H}}
\def\Dis{\operatorname{Dis}}
\def\Id{\operatorname{id}}
\def\pr{\mathop{\text{pr}}\nolimits}
\def\Der{\operatorname{Der}}
\def\ctimes{\mathbin{\hat{\otimes}}}
\def\Lotimes{\overset{L}{\otimes}}
\def\Mod{\operatorname{Mod}}
\def\gr{\operatorname{gr}}
\def\gr{\operatorname {gr}}
\def\Ext{\operatorname {Ext}}
\def\Hom{\operatorname {Hom}}
\def\uHom{\operatorname {\mathcal{H}\mathit{om}}}
\def\RHom{\operatorname {RHom}}
\def\uRHom{\operatorname {R\mathcal{H}\mathit{om}}}
\def\Tot{\operatorname {Tot}}
\def\r{\rightarrow}
\let\dirlim\injlim
\newtheorem{lemma}{Lemma}[section]
\newtheorem{proposition}[lemma]{Proposition}
\newtheorem{theorem}[lemma]{Theorem}
\theoremstyle{definition}
\theoremstyle{remark}
\newtheorem{remark}[lemma]{Remark}
\newdimen\uboxsep \uboxsep=1ex
\def\uboxn#1{\vtop to 0pt{\hrule height 0pt depth 0pt\vskip\uboxsep
\hbox to 0pt{\hss #1\hss}\vss}}
\def\uboxs#1{\vbox to 0pt{\vss\hbox to 0pt{\hss #1\hss}
\vskip\uboxsep\hrule height 0pt depth 0pt}}
\numberwithin{equation}{section}
\def\HH{\operatorname{HH}}
\def\poly{\operatorname{poly}}
\def\uExt{\mathop{\Escr\mathit{xt}}\nolimits}
\title[Hochschild (co)homology for Lie algebroids]{Hochschild (co)homology for Lie
algebroids}
\author{Damien Calaque}
\address[Damien Calaque]{Institut Camille Jordan,
Universit\'e Claude Bernard Lyon 1,
43 boulevard du 11 novembre 1918,
F-69622 Villeurbanne Cedex
France}
\email{damien.calaque@math.univ-lyon1.fr}
\author{Carlo A. Rossi}
\address[Carlo A. Rossi]{Department of mathematics, ETH Zurich, 8092 Zurich, Switzerland}
\email{carlo.rossi@math.ethz.ch}
\author{Michel van den Bergh}
\address[Michel Van den Bergh]{Departement WNI, Hasselt University, Agoralaan, 3590
Diepenbeek, Belgium}
\email{michel.vandenbergh@uhasselt.be}
\thanks{The research of the first
  author is partly
  supported by the french national agency through the ANR project G\'eSAQ (project number
$JC08\underline{~}320699$). }
\thanks{The third author is a director of research at the FWO.}
\keywords{Hochschild (co)homology, Lie algebroids}
\subjclass{Primary 14F99, 14D99}
\begin{document}
\begin{abstract}
  We define the Hochschild (co)homology of a ringed space relative to
  a locally free Lie algebroid.  Our definitions mimic those of Swan
  and Caldararu for an algebraic variety. We show that our
  (co)homology groups can be computed using suitable standard
  complexes.

Our formul\ae\ depend on certain natural structures on jetbundles over
Lie algebroids. In an appendix we explain this by showing that
such jetbundles are formal groupoids which serve as the formal
exponentiation of the Lie algebroid.
\end{abstract}
\maketitle
\section{Introduction}
This is a companion note to \cite{CRVdB}. Throughout $k$ is a base field
of characteristic zero. If $X$ is a smooth algebraic variety
over $k$ of dimension $d$ then Caldararu defines the Hochschild (co)homology
of $X$ as
\begin{equation}
\label{ref-1.1-0}
\begin{split}
\HH^n(X)&=\operatorname{Ext}^n_{\mathcal{O}_{X\times
X}}(\mathcal{O}_\Delta,\mathcal{O}_\Delta)\\
\HH_n(X)&=\operatorname{Ext}^{d-n}_{\mathcal{O}_{X\times X}}(\omega^{-1}_\Delta,
\Oscr_\Delta)
\end{split}
\end{equation}
where $\Delta\subset X\times X$ denotes the diagonal. The first of
these definitions is due to Swan~\cite{Swan}.

From these definitions
it is clear that  $\HH^\bullet(X)$ has a canonical algebra structure
(by the Yoneda product) and $\HH_\bullet(X)$ is a module over it (by the action
of $\HH^\bullet(X)$ on $\Oscr_\Delta$). As customary we refer below
to these algebra and module structures as ``cup'' and ``cap'' products.

Building on the work of a number of people (notably Kontsevich and
Shoikhet) we completed in \cite{CRVdB} the proof of a conjecture by
Caldararu which asserts that there is a certain Duflo type isomorphism
between the above Hochschild (co)homology groups and the cohomology
groups of poly-vector fields and differential forms which preserves
the natural algebra and module structures. We refer to
\cite{Caldararu2,DTT2,DTT1} for background information and additional
results.

One small issue was left open. Instead of using
\eqref{ref-1.1-0} directly we used explicit chain and cochain complexes
for the definition of Hochschild (co)homology.  As a result it is not
immediately obvious that our algebra and module structures are
precisely the same as Caldararu's. The fact that this is true
for the cup product was proved in \cite{Y1} by Yekutieli.

In \cite{CRVdB} we actually proved a version of Caldararu's conjecture
valid for locally free Lie algebroids. This yields in particular the
algebraic, analytic and $C^\infty$-setting as special cases. In this
paper we prove in the Lie algebroid setting an agreement property (see Theorem
\ref{ref-11.1-44})
between the Hochschild (co)homology defined by complexes and by
formul\ae\  similar to \eqref{ref-1.1-0} (see \eqref{ref-4.2-17}).

Our formul\ae\  depend on various
interesting structures on the sheaf of jet bundles of a Lie
algebroid. In Appendix \ref{ref-A-45} we clarify this by showing that
these structures make the sheaf of jet bundles into a formal groupoid
which serves as the formal exponentiation of the Lie algebroid (see
also \cite[Appendix]{Kapranov3} and \cite[\S3.4]{KP}).

\section{Acknowledgement}

The authors thank the referee for his thorough reading of the paper as well
as for pointing out Proposition \ref{new6}.

\section{Notation and conventions}

Unadorned tensor products are over $k$. We usually write $\otimes_X$
instead of $\otimes_{\Oscr_X}$ and we apply a similar convention for $\Hom$.
We often drop ``sheaf of''. For example we usually speak of an algebra
instead of a sheaf of algebras.
Lower indices denote homological grading. If we need to translate
between homological and cohomological grading we use the convention
$H_n(-)=H^{-n}(-)$.

Some objects below come with a natural topology which will be appropriately
specified. If an object is introduced without a specific topology then it
is assumed to have the discrete topology. This applies in particular to
structure sheaves.

\section{Preliminaries}
\label{ref-3-1}
\subsection{Sites}
\label{new7}
For the theory of sites we refer to \cite{SGA41}. We freely use sheaf theory over
(ringed) sites and in particular the fact
that the category of modules over a ringed site is a Grothendieck
category (see \cite[Prop.\ II.6.7]{SGA41}). By definition this is an
abelian category with a generator and exact filtered colimits. Such a
category automatically has enough injectives and arbitrary products \cite{Groth1}.

We will also use the fact that the category of complexes over a ringed
site has both K-flat resolutions \cite[Theorem 3.4]{Spaltenstein} and
K-injective resolutions \cite{TLS}. Hence we may freely use unbounded
$\Hom$'s and tensor products and the corresponding $\Hom$-tensor
identities.
\section{Lie algebroids, enveloping algebras, jet bundles and connections}
\subsection{Lie algebroids}
 Throughout
$(X,\Oscr_X)$ is a ringed site (or ringed space if the reader is not
interested in the utmost generality) and $\Lscr$ is a Lie algebroid on
$X$ locally free of rank $d$.
By definition $\Lscr$ is a sheaf of Lie algebras acting on $\Oscr_X$
which is also an $\Oscr_X$-module satisfying the following conditions
\begin{equation}
\label{ref-3.1-2}
\begin{aligned}
(f_1l)(f_2)&=f_1 l(f_2)\\
l(f_1f_2)&=l(f_1)f_2+f_1l(f_2)\\
[l_1,l_2](f)&=l_1(l_2(f))-l_2(l_1(f))\\
[l_1,fl_2]&=l_1(f)l_2+f[l_1,l_2]
\end{aligned}
\end{equation}
for sections $f,f_1,f_2$ of $\Oscr_X$ and sections $l,l_1,l_2$ of $\Lscr$.
\subsection{Universal enveloping algebras}
The universal enveloping algebra  (see \cite{Rine}) of $\Lscr$ is denoted by
$\mathrm{U}_X\Lscr$.   To define this object note that $\Oscr_X\oplus \Lscr$
also carries the structure of a sheaf of Lie algebras via $[(f_1,l_1),(f_2,l_2)]
=(l_1(f_2)-l_2(f_1),[l_1,l_2])$.
Then $\mathrm{U}_X\Lscr$ is the quotient of the universal enveloping
algebra of $\Oscr_X\oplus \Lscr$ subject to the additional relation
$f\cdot l=fl$, for $f$ in $\Oscr_X$ and $l$ in $\Oscr_X\oplus \Lscr$.

If $X$ is a smooth algebraic variety and $\Lscr=\Tscr_X$ then
$\mathrm{U}_X\Lscr$ equals $\Dscr_X$, the sheaf of differential
operators on $X$. In general the properties of $\mathrm{U}_X\Lscr$
mimic those of $\Dscr_X$. In particular giving $\Oscr_X$ degree zero
and $\Lscr$ degree one, $\mathrm{U}_X\Lscr$ becomes equipped with an
ascending filtration $F^\bullet$ such that
\begin{equation}
\label{ref-3.2-3}
\gr_F \mathrm{U}_X\Lscr=\mathrm S_X\Lscr
\end{equation}  The
action of $\Lscr$ on $\Oscr_X$ extends to an action of
$\mathrm{U}_X\Lscr$ on $\Oscr_X$ which makes $\Oscr_X$ into a left
$\mathrm{U}_X\Lscr$-module.

As $\mathrm{U}_X\Lscr$ contains $\Oscr_X$ it is equipped with a
natural left $\Oscr_X$-action. We view $\mathrm{U}_X\Lscr$
as a central(!) $\Oscr_X$-bimodule with the right $\Oscr_X$-action
defined to be equal to the left one. In this way
$\mathrm{U}_X\Lscr$ becomes a sheaf of cocommutative $\Oscr_X$-coalgebras. More
precisely there is a comultiplication $\Delta:\mathrm{U}_X\Lscr \r
\mathrm{U}_X\Lscr\otimes_X \mathrm{U}_X\Lscr$ and a counit
$\epsilon:\mathrm{U}_X\Lscr\r \Oscr_X$ which are locally given by the
following formul\ae\  (using the Sweedler convention)
\begin{equation}\label{ref-3.3-4}
\begin{aligned}
\Delta(f)&=f\otimes 1=1\otimes f\\
\Delta(l)&=l\otimes 1+1\otimes l\\
\Delta(DE)&=\sum_{D,E}D_{(1)}E_{(1)}\otimes D_{(2)}E_{(2)}\\
\epsilon(D)&=D(1)
\end{aligned}
\end{equation}
for $f$ a section of $\Oscr_X$, $l$ a section of $\Lscr$ and $D,E$
sections of $\mathrm{U}_X\Lscr$. Although $\mathrm{U}_X\Lscr\otimes_X
\mathrm{U}_X\Lscr$ is not a sheaf of algebras the third formula is
well defined as $\Delta$ takes values in a certain subsheaf
of $\mathrm{U}_X\Lscr\otimes_X
\mathrm{U}_X\Lscr$ which is an algebra (see e.g.\ \cite{Xu}).



\subsection{Jet bundles}
\label{ref-3.3-5}
The sheaf of $\Lscr$-jets on $X$ is defined as
\begin{equation}
\label{ref-3.4-6}
{\mathrm J}_X\Lscr=\uHom_{X}(\mathrm U_X\Lscr,\Oscr_X)
\end{equation}
(this is unambiguous, as the left and right $\Oscr_X$-modules structures on
$\mathrm U_X\Lscr$ are the same).  Being the dual of an
$\Oscr_X$-module ${\mathrm J}_X\Lscr$ is also an $\Oscr_X$-module (given that
$\Oscr_X$ is commutative).  Below we will sometimes use the
corresponding $\Oscr_X$-linear evaluation pairing
\begin{equation}
\label{ref-3.5-7}
\langle-,-\rangle: {\mathrm J}_X\Lscr\times \mathrm U_X\Lscr\r \Oscr_X
\end{equation}
The cocommutative coalgebra structure on
$\mathrm U_X\Lscr$ induces a commutative algebra structure on ${\mathrm J}_X\Lscr$ by the
usual formula
\begin{equation}
\label{ref-3.6-8}
(\alpha\beta)(D)=\sum_D \alpha(D_{(1)})\beta(E_{(2)})
\end{equation}
for $\alpha,\beta$ sections on ${\mathrm J}_X\Lscr$ and $D$ a section of
$\mathrm U_X\Lscr$. The unit ``1'' of ${\mathrm J}_X\Lscr$ is given by~$\epsilon$. One
verifies that $\Oscr_X\r {\mathrm J}_X\Lscr:f\mapsto f\cdot 1$ is an algebra
homomorphism. So ${\mathrm J}_X\Lscr$ is an $\Oscr_X$-algebra.

The natural ascending filtration $F^\bullet$ on $\mathrm U_X\Lscr$
introduced above induces a descending filtration $F_\bullet$ on
${\mathrm J}_X\Lscr$ where $F_n {\mathrm J}_X\Lscr$ is given by those sections of
${\mathrm J}_X\Lscr=\uHom_{X}(\mathrm U_X\Lscr,\Oscr_X)$ which vanish on $F^n \mathrm
U_X\Lscr$.

One checks by a local computation that $F_\bullet$ is the adic filtration
for the ideal $\mathrm J^c_X\Lscr=F_1{\mathrm J}_X\Lscr\subset {\mathrm J}_X\Lscr$. For
this adic filtration
${\mathrm J}_X\Lscr$ is complete and furthermore we have
\begin{equation}
\label{ref-3.7-9}
\gr {\mathrm J}_X\Lscr=\mathrm S_X\Lscr^\ast
\end{equation}
Locally we may lift a basis $x_1,\ldots,x_d$ for $\Lscr^\ast$ to $\mathrm J^c\Lscr$
and in this way one obtains a local isomorphism of sheaves of algebras
\begin{equation}
\label{new5}
{\mathrm J}_X\Lscr\cong\Oscr_X[[x_1,\ldots,x_d]]
\end{equation}
\begin{lemma}
\label{ref-3.1-10}
If we equip $\mathrm U_X\Lscr$ with the discrete topology
and ${\mathrm J}_X\Lscr$ with the ${\mathrm J}_X^c\Lscr$-adic topology then
\eqref{ref-3.5-7}
is a non-degenerate pairing of sheaves of topological $\Oscr_X$-modules in the sense that
it induces isomorphisms
\begin{align}
{\mathrm J}_X\Lscr&=\uHom_X(\mathrm U_X\Lscr,\Oscr_X)\\
\mathrm U_X\Lscr&=\uHom_X^{\text{\textrm{cont}}}({\mathrm
J}_X\Lscr,\Oscr_X)\label{ref-3.9-11}
\end{align}
\end{lemma}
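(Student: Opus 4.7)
The first asserted isomorphism is nothing but the definition \eqref{ref-3.4-6} of ${\mathrm J}_X\Lscr$, so there is nothing to do there. The content of the lemma lies in \eqref{ref-3.9-11}, which I would establish by the standard pro-finite duality between the (discretely topologized) filtered colimit $\mathrm U_X\Lscr=\varinjlim_n F^n\mathrm U_X\Lscr$ and its pro-finite dual ${\mathrm J}_X\Lscr$.

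\textbf{Step 1: local freeness of filtration pieces.} Using the PBW isomorphism \eqref{ref-3.2-3} together with the assumption that $\Lscr$ is locally free of rank $d$, I would argue by induction on $n$ along the short exact sequences
\begin{equation*}
0 \to F^{n-1}\mathrm U_X\Lscr \to F^n\mathrm U_X\Lscr \to S^n_X\Lscr \to 0
\end{equation*}
that each $F^n\mathrm U_X\Lscr$ is locally free of finite rank as an $\Oscr_X$-module and that the sequences above are locally split. In particular, $F^n\mathrm U_X\Lscr$ is locally a direct summand of $\mathrm U_X\Lscr$.

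\textbf{Step 2: duality at each level.} The filtration $F_\bullet$ is defined so that $F_n{\mathrm J}_X\Lscr$ is the kernel of the restriction map $\rho_n:{\mathrm J}_X\Lscr=\uHom_X(\mathrm U_X\Lscr,\Oscr_X) \to \uHom_X(F^n\mathrm U_X\Lscr,\Oscr_X)$. Combining this with the local splitting from Step 1, $\rho_n$ is surjective, and we get a natural isomorphism
\begin{equation*}
{\mathrm J}_X\Lscr / F_n {\mathrm J}_X\Lscr \;\cong\; \uHom_X(F^n\mathrm U_X\Lscr, \Oscr_X).
\end{equation*}

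\textbf{Step 3: continuous duality.} Since $\Oscr_X$ is discrete and ${\mathrm J}_X\Lscr$ carries the $F_\bullet$-topology, continuity is precisely local vanishing on some $F_n$, so
\begin{equation*}
\uHom_X^{\mathrm{cont}}({\mathrm J}_X\Lscr, \Oscr_X) \;=\; \varinjlim_n \uHom_X({\mathrm J}_X\Lscr / F_n{\mathrm J}_X\Lscr,\, \Oscr_X).
\end{equation*}
Substituting Step 2 and invoking the reflexivity $\uHom_X(\uHom_X(M,\Oscr_X),\Oscr_X) \cong M$ for locally free $M$ of finite rank, the right-hand side becomes $\varinjlim_n F^n\mathrm U_X\Lscr = \mathrm U_X\Lscr$. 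A direct check shows the resulting isomorphism is induced by the pairing \eqref{ref-3.5-7}.

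\textbf{Main obstacle.} The argument is essentially formal once one has the local freeness and local splitting in Step 1; that is the step where the local-freeness hypothesis on $\Lscr$ and the PBW theorem are indispensable, as double-dual reflexivity can only be applied to finitely generated projective modules. A secondary bookkeeping point is the precise indexing convention for $F_\bullet$ (whether $F_n$ annihilates $F^n$ or $F^{n-1}$), which is harmless and can be verified once and for all via a local computation using the explicit trivialization \eqref{new5}.
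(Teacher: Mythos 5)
Your proposal is correct and follows essentially the same route as the paper: both reduce \eqref{ref-3.9-11} to a perfect pairing between the finite-rank locally free modules ${\mathrm J}_X\Lscr/({\mathrm J}_X^c\Lscr)^n$ and $F^n\mathrm U_X\Lscr$ (the paper checks non-degeneracy by passing to the associated graded objects via \eqref{ref-3.2-3} and \eqref{ref-3.7-9}, which is the same content as your Steps 1--2) and then pass to the direct limit over $n$.
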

\begin{proof}
The first isomorphism is by definition so we concentrate on the second one.

Note that $\uHom_{X}^{\text{cont}}({\mathrm J}_X\Lscr,\Oscr_X)\subset \uHom_{X}({\mathrm
J}_X\Lscr,\Oscr_X)$
is given by those sections which vanish (locally) on some power of $\mathrm J^c_X\Lscr$.
The pairing \eqref{ref-3.5-7} induces a pairing of locally
free $\Oscr_X$-modules of finite rank
\[
\langle-,-\rangle: {\mathrm J}_X\Lscr/({\mathrm J}_X^c\Lscr)^n\times F^n \mathrm
U_X\Lscr\r \Oscr_X
\]
and from \eqref{ref-3.2-3} and \eqref{ref-3.7-9} it follows easily
that this pairing is non-degenerate.

Thus
\[
F^n\mathrm U_X\Lscr=\uHom_{X}({\mathrm J}_X\Lscr/(\mathrm J^c_X\Lscr)^n,\Oscr_X)
\]
Taking the direct limit yields \eqref{ref-3.9-11}
\end{proof}
As a slight generalization we consider the 
pairing
\begin{multline*}
\langle-,-\rangle: ({\mathrm J}_X\Lscr)^{\ctimes_X n}\times(\mathrm U_X\Lscr)^{\otimes_X
n} \r \Oscr_X:
\\(\alpha_1\otimes\cdots\otimes \alpha_n,D_1\otimes\cdots\otimes D_n)\mapsto
\langle \alpha_1,D_1\rangle\cdots
\langle \alpha_n,D_n\rangle
\end{multline*}
The filtrations $F^\bullet$ and $F_\bullet$ on $\mathrm U_X\Lscr$ and
${\mathrm J}_X\Lscr$ induce corresponding filtrations on
$(\mathrm U_X\Lscr)^{\otimes_X n}$ and
$({\mathrm J}_X\Lscr)^{\ctimes_X n}$ and the filtration on $({\mathrm
J}_X\Lscr)^{\ctimes_X n}$ is complete.
As in Lemma \ref{ref-3.1-10}  one shows that $\langle-,-\rangle$
is non-degenerate.
\subsection{Flat connections}
\label{ref-3.4-12}
If $\Mscr$ is an $\Oscr_X$-module then an $\Lscr$-connection on $\Mscr$ is a map
\[
\nabla:\Lscr\otimes_k \Mscr\r \Mscr
\]
with properties mimicking those of ordinary connections (which correspond
to $\Lscr=\Tscr_X$). Namely
\begin{align*}
\nabla_{fl}(m)&=f\nabla_l(m)\\
\nabla_{l}(fm)&=l(f)m+f\nabla_l(m)
\end{align*}
for sections $f$ of $\Oscr_X$, $l$ of $\Lscr$ and $m$ of $\Mscr$\footnote{Equivalently,
an $\Lscr$-connection on $\Mscr$ is determined by a map
${\rm d}^\nabla:\Mscr\to\Lscr^*\otimes_X\Mscr$ satisfying a Leibniz type identity (see
e.g.~\cite{vdbcalaque}).}. Here and below we make
use of the standard notation
$\nabla_l(m)=\nabla(l\otimes m)$. A
connection is flat if
$\nabla_{[l_1,l_2]}=\nabla_{l_1}\nabla_{l_2}-\nabla_{l_2}\nabla_{l_1}$. All
connections below are flat. A flat connection on $\Mscr$ extends to a left
$\mathrm U_X\Lscr$-module structure on $\Mscr$, and in fact this construction is
reversible yielding an equivalence between the two notions. If $D$ is
a section of $\mathrm U_X\Lscr$ then we sometimes denote its action
on a module with a flat connection by $\nabla_D$.

Clearly $\Oscr_X$ and $\mathrm U_X\Lscr$ are equipped with canonical
flat connections
\begin{align*}
{}^G\nabla_l f&=l(f)\\
{}^G\nabla_l D&=lD
\end{align*}
for sections $f$ of $\Oscr_X$, $l$ of $\Lscr$ and $D$ of $\mathrm U_X\Lscr$.

If $M,N$ are equipped with a flat $\Lscr$-connection then the same holds
for $M\otimes_X N$ and $\uHom_X(M,N)$. The formul\ae\  are the same as in
the case $\Lscr=\Tscr_X$.  This applies in particular to the
definition of ${\mathrm J}_X\Lscr$ \eqref{ref-3.4-6}.  Thus ${\mathrm J}_X\Lscr$ is also
equipped with a canonical flat connection which we denote by ${}^G\nabla$
as well.\footnote{The ``G'' stands for Grothendieck, as this connection
is often referred to as the ``Grothendieck connection.''} Explicitly for a section $l$ of
$\Lscr$, a section $\alpha$
of ${\mathrm J}_X\Lscr$ and a section $D$ of $\mathrm U_X\Lscr$ we have
\begin{align*}
{}^G\nabla_l(\alpha)(D)&=l(\alpha(D))-\alpha(lD)
\end{align*}
One verifies in particular
\begin{equation}
\label{ref-3.10-13}
{}^G\nabla_l(\alpha\beta)={}^G\nabla_l(\alpha)\beta+\alpha {}^G\nabla_l(\beta)
\end{equation}
Besides the left $\mathrm U_X\Lscr$-module on ${\mathrm J}_X\Lscr$ induced by ${}^G
\nabla$
there is another left $\mathrm U_X\Lscr$-action on ${\mathrm J}_X\Lscr$ which we
denote by ${}^2\nabla$. For sections $D,E$ of $\mathrm U_X\Lscr$ and $\alpha$ of
${\mathrm J}_X\Lscr$ we put
\[
({}^2\nabla_E \alpha)(D)=\alpha(DE)
\]
It is an easy verification that ${}^G\nabla$ and ${}^2\nabla$ commute.
See Appendix \ref{ref-A-45} for more details.

\medskip

If $X$ is a smooth algebraic variety and $\Lscr=\Tscr_X$ then we can
make the above definitions more concrete. As already mentioned above $\mathrm
U_X\Lscr$ is the sheaf of differential operators $\Dscr_X$ on $X$. We
also have ${\mathrm J}_X\Lscr=\pr_{1\ast}\widehat{\Oscr}_{X\times X,\Delta}$ and
\begin{equation}
\label{ref-3.11-14}
\begin{aligned}
\langle f\boxtimes g,D\rangle&=f D(g)\\
{}^G\nabla_D(f\boxtimes g)&=D(f)\boxtimes g\\
{}^2\nabla_D(f\boxtimes g)&=f\boxtimes D(g)
\end{aligned}
\end{equation}
for sections $f,g$ of $\Oscr_X$ and $D$ of $\Dscr_X$. The first line
refers to the pairing between ${\mathrm J}_X\Lscr$ and $\mathrm U_X\Lscr$ as
in \eqref{ref-3.5-7}.
\begin{remark}
This example is a special case of the following one: consider a smooth
groupoid scheme $\mathcal G=\mathcal G(G,X,s,t,e,\mu)$ over $X$ where
$s,t:G\rightarrow X$ are respectively the source and target maps,
$e:X\rightarrow G$ is the unit map and $\mu:G\times_{s,X,t} G\rightarrow G$
is the composition.

If $x\in X$, $g\in t^{-1}x$ and $u$ is a section of $\mathcal
O_{t^{-1}x}$ then we put $(L_gu)(h)=u(gh)$. This definition is such
that $(L_gu)(h)$ is defined when $t(h)=s(g)$. In other words
$L_gu$ is a function on $t^{-1}s(g)$. Thus $L_g$ maps sections of
$\mathcal O_{t^{-1}t(g)}$ to sections of $\mathcal O_{t^{-1}s(g)}$.

Let us write $\mathcal T_t\subset \mathcal T_{G}$ for the relative
tangent bundle of $t:G\rightarrow X$. The vector fields in $\mathcal
T_t$ act by derivations on $\mathcal O_{t^{-1}x}$ for any $x\in X$.
We say that a vector field $\xi$ in $\mathcal T_t$ is \emph{left
  invariant} if for any $g\in G$ and for any section $u$ of $\mathcal
O_{t^{-1}t(g)}$ we have $\xi(L_gu)=L_g\xi(u)$. It is easy to see that
the left invariant sections of $s_\ast\mathcal T_t$ are closed under
Lie brackets of vector fields and hence they form a Lie algebroid
on~$X$. By definition this is the Lie algebroid associated to
$\mathcal G$ and it is denoted by~$\mathcal L_{\mathcal G}$.


In this setting $\mathrm J_X\mathcal L=s_\ast\widehat{\mathcal
    O}_{G,X}$ where $X$ is regarded as a subscheme of $G$ via
the unit map $e$.  Vector fields on $G$ act on
$s_\ast\widehat{\mathcal O}_{G,X}$ by derivations. The Grothendieck
connection ${}^G\nabla$ is the restriction of
this action to the left invariant vector fields.


If we put $G=X\times X$, $s(x,y)=x$, $t(x,y)=y$, $e(x)=(x,x)$ and $
\mu((w,y),(x,w))=(x,y) $ then the data $(G,X,s,t,e,\mu)$ form a
groupoid on $X$. One verifies that the left invariant vector fields are
precisely those vector fields which are obtained by pullback from the
first projection $X\times X\rightarrow X$. This gives an expression
for the Grothendieck connection which agrees with \eqref{ref-3.11-14}.
\end{remark}
\section{Hochschild (co)homology for Lie algebroids}
\label{ref-4-15}
 We need a fragment of the groupoid structure on ${\mathrm J}_X\Lscr$ (see
Appendix \ref{ref-A-45})
namely the counit
\begin{equation}
\label{ref-4.1-16}
\epsilon:{\mathrm J}_X\Lscr\r \Oscr_X:\alpha\mapsto\alpha(1)
\end{equation}
where the $1$ is the unit of $\mathrm U_X\Lscr$. The kernel
of $\epsilon$ is the sheaf of ideals $\mathrm J^c_X\Lscr$ introduced
above.

We use $\epsilon$ to make any $\Oscr_X$-module into a ${\mathrm J}_X\Lscr$-module. We
define the Hochschild (co)homology for $(X,\Oscr_X,\Lscr)$ as
\begin{equation}
\label{ref-4.2-17}
\begin{split}
\HH^n_{\Lscr}(X)&=\operatorname{Ext}^n_{{\mathrm J}_X\Lscr}(\mathcal{O}_X,\mathcal{O}_X)\\
\HH_n^{\Lscr}(X)&=\operatorname{Ext}^{d-n}_{{\mathrm
J}_X\Lscr}(\wedge^d\Lscr,\mathcal{O}_X)
\end{split}
\end{equation}
This definition is motivated by the following proposition
\begin{proposition} Assume that $X$ is a smooth
  algebraic variety of dimension $d$ and $\Lscr=\Tscr_X$. Then we have
  an isomorphism
\[
(\HH^n_{\Lscr}(X),\HH_n^{\Lscr}(X))\cong (\HH^n(X),\HH_n(X))
\]
compatible with the obvious algebra and module structures.
\end{proposition}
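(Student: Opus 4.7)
The plan is to compare $\uExt$'s computed over three successive base sheaves: $\Oscr_{X\times X}$, the structure sheaf $\widehat{\Oscr}_{X\times X,\Delta}$ of the formal neighborhood of the diagonal, and ${\mathrm J}_X\Lscr = \pr_{1*}\widehat{\Oscr}_{X\times X,\Delta}$. Under the identification recalled just above \eqref{ref-3.11-14}, the augmentation $\epsilon:{\mathrm J}_X\Lscr\to\Oscr_X$ is $\pr_{1*}$ of the restriction map $\widehat{\Oscr}_{X\times X,\Delta}\to\Oscr_\Delta$; in particular, $\Oscr_X$ (with ${\mathrm J}_X\Lscr$-action via $\epsilon$) corresponds to $\pr_{1*}\Oscr_\Delta$, and $\wedge^d\Lscr = \wedge^d\Tscr_X$ corresponds to $\pr_{1*}\omega_\Delta^{-1}$.

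Step (i): for $\Oscr_{X\times X}$-modules $\Fscr,\Gscr$ set-theoretically supported on $\Delta$, I claim
\[
\uExt^\bullet_{\Oscr_{X\times X}}(\Fscr,\Gscr) \;\cong\; \uExt^\bullet_{\widehat{\Oscr}_{X\times X,\Delta}}(\Fscr,\Gscr).
\]
This uses that $\widehat{\Oscr}_{X\times X,\Delta}$ is flat over $\Oscr_{X\times X}$, so a Koszul-type resolution of $\Oscr_\Delta$ along $\Iscr_\Delta$ remains a resolution after completion, while any $\Oscr_{X\times X}$-linear map into $\Gscr$ is automatically $\Iscr_\Delta$-adically continuous and so extends uniquely to the completion. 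Step (ii): push forward by $\pr_1$. The restriction of $\pr_1$ to $\widehat{X\times X}_\Delta$ is locally modelled on the projection $X\times \widehat{\AA}^d_0 \to X$, hence $\pr_{1*}$ is exact and induces an equivalence between quasi-coherent sheaves on $\widehat{X\times X}_\Delta$ and modules over $\pr_{1*}\widehat{\Oscr}_{X\times X,\Delta} = {\mathrm J}_X\Lscr$, so it preserves $\uExt$. Applying (i) and (ii) to $\Fscr = \Oscr_\Delta$ and $\Gscr \in \{\Oscr_\Delta,\omega_\Delta^{-1}\}$, and then passing to global sections and re-indexing the homological degree by $d-n$, yields the required isomorphism of (co)homology groups.

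For compatibility with the algebra and module structures: on both sides the cup product is the Yoneda composition of $\Ext$'s and the cap product is the Yoneda action on the second argument, and both are defined intrinsically in the ambient abelian category. The comparison isomorphisms in (i)--(ii) come from exact functors that transport $\Oscr_\Delta \leftrightarrow \Oscr_X$ and $\omega_\Delta^{-1} \leftrightarrow \wedge^d\Lscr$ in a way compatible with composition, so Yoneda products and actions transport correctly. The main obstacle I anticipate is the derived-category bookkeeping in step (ii) for unbounded complexes, namely verifying that $\pr_{1*}$ (together with the formal completion functor in (i)) preserves $K$-injective resolutions of the relevant objects; once this is in hand, the compatibility of the algebra and module structures in the last step is essentially formal.
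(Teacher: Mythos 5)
Your argument is correct and follows essentially the same route as the paper: the only real content is the comparison of $\Ext$'s over $\Oscr_{X\times X}$ and over the completion $\widehat{\Oscr}_{X\times X,\Delta}$, which the paper obtains from the flatness of the completion via the derived change-of-rings isomorphism $\RHom_{\Oscr_{X\times X}}(\Fscr,\Gscr)\cong\RHom_{\widehat{\Oscr}_{X\times X,\Delta}}(\widehat{\Oscr}_{X\times X,\Delta}\Lotimes_{\Oscr_{X\times X}}\Fscr,\Gscr)$ rather than your local Koszul resolutions, and the compatibility with cup and cap products is likewise immediate because the comparison maps are induced by the ring morphism $\Oscr_{X\times X}\to\widehat{\Oscr}_{X\times X,\Delta}$. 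Your step (ii) is left implicit in the paper, since the formal neighbourhood and $X$ share the same underlying space; just make sure that ``passing to global sections'' means applying $R\Gamma$ to $\uRHom$ rather than $\Gamma$ to the individual $\uExt^n$ sheaves.
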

\begin{proof}
  From \eqref{ref-3.11-14}\eqref{ref-4.1-16} we obtain that $\epsilon$ is given by
  $\epsilon(f\boxtimes g)=fg$. Thus we get (taking into account
$\wedge^d\Tscr_X=\omega_X^{-1}$).
\begin{align*}
\HH^n_{\Lscr}(X)&=\Ext^n_{\widehat{\Oscr}_{X\times X,\Delta}}(\Oscr_\Delta,\Oscr_\Delta)\\
\HH_n^{\Lscr}(X)&=\Ext^n_{\widehat{\Oscr}_{X\times
X,\Delta}}(\omega^{-1}_\Delta,\Oscr_\Delta)
\end{align*}
The inclusion map $\Oscr_{X\times X}\r \widehat{\Oscr}_{X\times X,\Delta}$
induces maps
\begin{align*}
p:\Ext^n_{\widehat{\Oscr}_{X\times X,\Delta}}(\Oscr_\Delta,\Oscr_\Delta)
&\r \Ext^n_{\Oscr_{X\times X}}(\Oscr_\Delta,\Oscr_\Delta)\\
q:\Ext^n_{\widehat{\Oscr}_{X\times X,\Delta}}(\omega^{-1}_\Delta,\Oscr_\Delta)
&\r \Ext^n_{\Oscr_{X\times X}}(\omega^{-1}_\Delta,\Oscr_\Delta)
\end{align*}
Which are obviously compatible with algebra and module structures.
We will prove that $p,q$ are isomorphisms. The flatness
of $\widehat{\Oscr}_{X\times X,\Delta}$ over $\Oscr_{X\times X}$
implies that there are isomorphisms
\begin{align*}
\widehat{\Oscr}_{X\times X,\Delta} \Lotimes_{\Oscr_{X\times X}}\Oscr_{\Delta}&\cong
\Oscr_{\Delta}\\
\widehat{\Oscr}_{X\times X,\Delta} \Lotimes_{\Oscr_{X\times X}}\omega^{-1}_{\Delta}&\cong
\omega^{-1}_{\Delta}
\end{align*}
in $D(\Mod(\widehat{\Oscr}_{X\times X,\Delta}))$.
Hence we obtain using change of rings
\begin{align*}
\Ext_{\Oscr_{X\times X}}^n(\Oscr_\Delta,\Oscr_\Delta)
&=\Ext_{\widehat{\Oscr}_{X\times X,\Delta}}^n(
\widehat{\Oscr}_{X\times X,\Delta}\Lotimes_{\Oscr_{X\times X}}\Oscr_\Delta,\Oscr_\Delta)\\
&\cong \Ext_{\widehat{\Oscr}_{X\times X,\Delta}}^n(
\Oscr_\Delta,\Oscr_\Delta)
\end{align*}
and one easily checks that this isomorphism is the inverse of $p$. The morphism
$q$ is treated similarly.
\end{proof}
For the sequel the above definition of Hochschild homology is not so
convenient. We will modify it.
\begin{lemma} There is a canonical isomorphism in $D(\Mod({\mathrm J}_X\Lscr))$.
\begin{equation}
\label{ref-4.3-18}
\uRHom_{{\mathrm J}_X\Lscr}(\Oscr_X,{\mathrm J}_X\Lscr)=\wedge^d\Lscr[-d]
\end{equation}
\end{lemma}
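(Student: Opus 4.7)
The strategy is to build a Koszul-type resolution of $\Oscr_X$ as a ${\mathrm J}_X\Lscr$-module and then dualize. The key local input is \eqref{new5}, which presents ${\mathrm J}_X\Lscr$ as a power series ring $\Oscr_X[[x_1,\ldots,x_d]]$ with $\Oscr_X$ the quotient by the maximal ideal $\mathrm{J}_X^c\Lscr=\ker\epsilon$; in particular $\mathrm{J}_X^c\Lscr$ is locally generated by a regular sequence of length $d$. Moreover, \eqref{ref-3.7-9} yields a canonical $\Oscr_X$-linear identification of the conormal bundle
\[
\mathrm{J}_X^c\Lscr/(\mathrm{J}_X^c\Lscr)^2\cong\Lscr^{\ast}.
\]

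First I would produce a free resolution of $\Oscr_X$ as a ${\mathrm J}_X\Lscr$-module. Since $\Lscr^\ast$ is locally free, there exist local $\Oscr_X$-linear splittings $\sigma:\Lscr^{\ast}\to\mathrm{J}_X^c\Lscr$ of the canonical surjection, and any such $\sigma$ gives rise to a Koszul complex
\[
K_\bullet:={\mathrm J}_X\Lscr\otimes_X\wedge^{\bullet}\Lscr^{\ast}
\]
with the usual Koszul differential $\alpha\otimes(v_1\wedge\cdots\wedge v_k)\mapsto \sum_i(-1)^{i-1}\alpha\sigma(v_i)\otimes(v_1\wedge\cdots\widehat{v_i}\cdots\wedge v_k)$. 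The regular-sequence computation in $\Oscr_X[[x_1,\ldots,x_d]]$ shows that $K_\bullet$ resolves $\Oscr_X$. Different choices of $\sigma$ yield canonically quasi-isomorphic complexes, so $K_\bullet$ represents a well-defined object of $D(\Mod({\mathrm J}_X\Lscr))$.

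Next I would apply $\uHom_{{\mathrm J}_X\Lscr}(-,{\mathrm J}_X\Lscr)$. Since $\Lscr$ is locally free of finite rank and ${\mathrm J}_X\Lscr$ is commutative, this produces the cochain complex
\[
K^\bullet={\mathrm J}_X\Lscr\otimes_X\wedge^{\bullet}\Lscr,
\]
concentrated in degrees $0,\ldots,d$, with differential of Koszul type. A final local computation under \eqref{new5} shows that $K^\bullet$ is the standard Koszul co-complex on the regular sequence $x_1,\ldots,x_d$, whose cohomology is well known to be concentrated in top degree where it equals the residue module. Tracking the identifications, the top cohomology is canonically
\[
\bigl({\mathrm J}_X\Lscr/\mathrm{J}_X^c\Lscr\bigr)\otimes_X\wedge^{d}\Lscr=\wedge^{d}\Lscr,
\]
yielding the claimed isomorphism in $D(\Mod({\mathrm J}_X\Lscr))$.

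The only genuinely delicate point is canonicity: the local Koszul complex depends on the choice of splitting $\sigma$, and one must check that the resulting object of $D(\Mod({\mathrm J}_X\Lscr))$, as well as the identification of the top cohomology with $\wedge^d\Lscr$, are independent of this choice and therefore glue globally. This is essentially formal, because $\mathrm{J}_X^c\Lscr$, its conormal $\Lscr^\ast$, and $\wedge^d\Lscr$ are all intrinsic to $(\Lscr,{\mathrm J}_X\Lscr,\epsilon)$, and uniqueness of Ext then forces the local isomorphisms to agree on overlaps.
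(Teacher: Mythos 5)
Your proposal is correct and follows essentially the same route as the paper: a local Koszul resolution of $\Oscr_X$ over ${\mathrm J}_X\Lscr\cong\Oscr_X[[x_1,\ldots,x_d]]$ built from a lift of a basis of $\Lscr^\ast$ to $\mathrm{J}^c_X\Lscr$, dualization to identify $\uExt^i_{{\mathrm J}_X\Lscr}(\Oscr_X,{\mathrm J}_X\Lscr)$ as $\wedge^d\Lscr$ in degree $d$ and zero otherwise, and a check that the resulting identification is independent of the choice so that it globalizes. Your intrinsic packaging of the lift as a splitting $\sigma:\Lscr^\ast\to\mathrm{J}^c_X\Lscr$ is a mild cosmetic refinement of the paper's choice of coordinates, not a different argument.
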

\begin{proof}  We need to show
\[
\uExt^i_{{\mathrm J}_X\Lscr}(\Oscr_X,{\mathrm J}_X\Lscr)
=
\begin{cases}
\wedge^d\Lscr &\text{if $i=d$}\\
0&\text{otherwise}
\end{cases}
\]
First we establish this locally in the case that
${\mathrm J}_X\Lscr=\Oscr_X[[x_1,\ldots,x_d]]$. Let~$K_\bullet$ be the Koszul
resolution of $\Oscr_X$ as ${\mathrm J}_X\Lscr$ module with respect to the regular
sequence $(x_1,\ldots,x_d)$. Thus
$K_\bullet=\Oscr_X[[x_1,\ldots,x_d]][\xi_1,\ldots,\xi_d]$ where $(\xi_i)$ are variables
of degree $-1$ such that
$d\xi_i=x_i$. One computes
\[
\uExt^i_{{\mathrm J}_X\Lscr}(\Oscr_X,{\mathrm J}_X\Lscr)
=
\begin{cases}
\Oscr_X\xi_1^\ast\cdots \xi_d^\ast=\wedge^d \Lscr &\text{if $i=d$}\\
0&\text{otherwise}
\end{cases}
\]
One verifies that the resulting isomorphism
$ \uExt^d_{{\mathrm J}_X\Lscr}(\Oscr_X,{\mathrm J}_X\Lscr)\cong \wedge^d \Lscr$ is
independent of the choice of $(x_1,\ldots,x_d)$ and hence it globalizes.
\end{proof}
\begin{proposition}
We have a canonical isomorphism
\begin{equation}
\label{ref-4.4-19}
\HH_n^\Lscr(X)=\Ext^{d-n}_{{\mathrm J}_X\Lscr}(\wedge^d\Lscr,\Oscr_X)\cong
R^{-n}\Gamma(X,\Oscr_X\Lotimes_{{\mathrm J}_X\Lscr}\Oscr_X)
\end{equation}
compatible with the $\HH^\bullet_\Lscr(X)$ actions on the rightmost
copies of $\Oscr_X$.
\end{proposition}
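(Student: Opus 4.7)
The strategy is to convert the $\Ext$ into a derived tensor product by exploiting the dualizing-type identity from Lemma \ref{ref-3.1-10}-style computation (namely the previous lemma, giving $\uRHom_{{\mathrm J}_X\Lscr}(\Oscr_X,{\mathrm J}_X\Lscr)=\wedge^d\Lscr[-d]$). First I would write
\[
\Ext^{d-n}_{{\mathrm J}_X\Lscr}(\wedge^d\Lscr,\Oscr_X)=R^{d-n}\Gamma\bigl(X,\uRHom_{{\mathrm J}_X\Lscr}(\wedge^d\Lscr,\Oscr_X)\bigr),
\]
reducing the problem to constructing a quasi-isomorphism of complexes of sheaves
\[
\uRHom_{{\mathrm J}_X\Lscr}(\wedge^d\Lscr,\Oscr_X)\;\cong\;\bigl(\Oscr_X\Lotimes_{{\mathrm J}_X\Lscr}\Oscr_X\bigr)[-d].
\]

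The proof of this quasi-isomorphism factors into two moves. The first move uses that $\wedge^d\Lscr$ is an invertible $\Oscr_X$-module whose ${\mathrm J}_X\Lscr$-action factors through $\epsilon$; since invertible $\Oscr_X$-modules are flat and tensoring with them is an exact functor that commutes with $\uHom_{{\mathrm J}_X\Lscr}(-,-)$, I obtain
\[
\uRHom_{{\mathrm J}_X\Lscr}(\wedge^d\Lscr,\Oscr_X)\;\cong\;(\wedge^d\Lscr)^{-1}\otimes_X\uRHom_{{\mathrm J}_X\Lscr}(\Oscr_X,\Oscr_X).
\]
The second move is a perfect-module duality: because $\Oscr_X$ is locally resolved by the finite Koszul complex $K_\bullet$ of free ${\mathrm J}_X\Lscr$-modules of finite rank (cf.\ the proof of the previous lemma), the natural morphism
\[
\uRHom_{{\mathrm J}_X\Lscr}(\Oscr_X,{\mathrm J}_X\Lscr)\Lotimes_{{\mathrm J}_X\Lscr}\Oscr_X\;\longrightarrow\;\uRHom_{{\mathrm J}_X\Lscr}(\Oscr_X,\Oscr_X)
\]
is a quasi-isomorphism, as one checks term-by-term on the free terms $K_i={\mathrm J}_X\Lscr\otimes_X V_i$ where both sides reduce to $V_i^\vee\otimes_X\Oscr_X$. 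Applying the previous lemma and then pulling $\wedge^d\Lscr$ back through the derived tensor product (again using the triviality of its ${\mathrm J}_X\Lscr$-action), the factor $(\wedge^d\Lscr)^{-1}\otimes_X\wedge^d\Lscr$ collapses to $\Oscr_X$ and the claimed quasi-isomorphism drops out.

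For the compatibility of module structures, I note that $\HH^\bullet_\Lscr(X)=\Ext^\bullet_{{\mathrm J}_X\Lscr}(\Oscr_X,\Oscr_X)$ acts on the left-hand side by Yoneda composition on the rightmost $\Oscr_X$, and on the right-hand side by the natural map $\uRHom_{{\mathrm J}_X\Lscr}(\Oscr_X,\Oscr_X)\Lotimes_{{\mathrm J}_X\Lscr}\bigl(\Oscr_X\Lotimes_{{\mathrm J}_X\Lscr}\Oscr_X\bigr)\to\Oscr_X\Lotimes_{{\mathrm J}_X\Lscr}\Oscr_X$ acting on the rightmost factor. Each of the isomorphisms above is natural in the target $\Oscr_X$, and the perfect-module duality is compatible with this naturality, so tracing through gives compatibility of the two actions.

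The main obstacle I expect is the careful verification of the perfect-module duality in the sheafified setting: although it holds locally by explicit computation with the Koszul resolution, one must ensure the natural transformation between the two functors globalizes, which rests on the fact that the identification of the dualizing module $\uRHom_{{\mathrm J}_X\Lscr}(\Oscr_X,{\mathrm J}_X\Lscr)$ with $\wedge^d\Lscr[-d]$ is canonical (independent of the choice of local coordinates $x_1,\ldots,x_d$), precisely what was asserted at the end of the proof of the previous lemma. A secondary, more bookkeeping-style obstacle is making sure the twist by $(\wedge^d\Lscr)^{-1}$ and the twist inside the derived tensor product cancel compatibly with the ${\mathrm J}_X\Lscr$-module structures, which ultimately reduces to the fact that the ${\mathrm J}_X\Lscr$-action on $\wedge^d\Lscr$ is purely through $\epsilon$.
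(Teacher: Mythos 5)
Your proposal is correct and is essentially the paper's argument: both rest on the identification $\uRHom_{{\mathrm J}_X\Lscr}(\Oscr_X,{\mathrm J}_X\Lscr)=\wedge^d\Lscr[-d]$ from the preceding lemma together with the perfect-complex duality justified locally by the Koszul resolution, the only difference being that you first untwist by $(\wedge^d\Lscr)^{-1}$ and then dualize, whereas the paper substitutes $\wedge^d\Lscr[-d]$ into the first argument and applies the biduality $\uRHom_{{\mathrm J}_X\Lscr}(\uRHom_{{\mathrm J}_X\Lscr}(\Oscr_X,{\mathrm J}_X\Lscr)[d],\Oscr_X)\cong\Oscr_X\Lotimes_{{\mathrm J}_X\Lscr}\Oscr_X[-d]$ in one step. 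Your naturality argument for the $\HH^\bullet_\Lscr(X)$-action is likewise the paper's observation that the rightmost copy of $\Oscr_X$ is never touched.
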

\begin{proof}
We compute
\begin{align*}
\Ext^{d-n}_{{\mathrm J}_X\Lscr}(\wedge^d\Lscr,\Oscr_X)&=
R^{d-n}\Gamma(X,\uRHom_{{\mathrm J}_X\Lscr}(\uRHom_{{\mathrm J}_X\Lscr}(\Oscr_X,{\mathrm
J}_X\Lscr)[d],\Oscr_X))\\
&=R^{d-n}\Gamma(X,\Oscr_X\Lotimes_{{\mathrm J}_X\Lscr}\Oscr_X[-d])\\
&=R^{-n}\Gamma(X,\Oscr_X\Lotimes_{{\mathrm J}_X\Lscr}\Oscr_X)\qed
\end{align*}

As we have not touched the rightmost copy of $\Oscr_X$ on both sides
of \eqref{ref-4.4-19} it follows that this isomorphism is compatible
with the $\HH_\Lscr^\bullet(X)$-action.
\def\qed{}\end{proof}
\section{The Hochschild cochain complex}
The Hochschild cochain complex of $\Lscr$ (also called the sheaf of
$\Lscr$-poly-differential operators) $\mathrm{HC}^{\bullet}_{\Lscr,X}$ is defined as the
tensor algebra\footnote{In
  \cite{CRVdB} we used a shifted version of this complex (denoted by
$D_{\poly,X}^\Lscr$) to make the Lie
  bracket degree zero. Since here we emphasize the cup product we drop
  the shift.}  $T_X(\mathrm U_X\Lscr)$ with
differential
\[
\mathrm d_\mathrm H(D)=\begin{cases}
0,& p=0 \\
D\otimes 1-\Delta_p(D)+\Delta_{p-1}(D)-\cdots+(-1)^{p+1} 1\otimes D & p>0
\end{cases}
\]
where
$
D=D_1\otimes\cdots\otimes D_p
$
is a section of $T^p_X(\mathrm U_X\Lscr)$ and $\Delta_i$ is $\Delta$
applied to the $i$-th factor.  The Hochschild cochain complex
is naturally a DG-algebra with the product being derived from the standard
product in the tensor algebra $T_X(\mathrm U_X\Lscr)$. We refer
to this product as the ``cup product'' and denote it by $\cup$.
Explicitly we have
\[
(D_1\otimes\cdots\otimes D_p)\cup (E_1\otimes \cdots\otimes E_q)
=(-1)^{pq} D_1\otimes\cdots\otimes D_p\otimes E_1\otimes \cdots\otimes E_q
\]
\section{The Hochschild chain complex}
  The complex of $\Lscr$-poly-jets over $X$ is defined as
\[
\widehat{\mathrm{HC}}_{X,\bullet}({\mathrm J}_X\Lscr)=\bigoplus_{p\geq 0} ({\mathrm
J}_X\Lscr)^{\widehat{\otimes}_{X} p+1}
\]
equipped with the usual Hochschild differential
\[
\mathrm b_H(\alpha_0\otimes\alpha_1\otimes\cdots \otimes \alpha_p)
=\alpha_0\alpha_1\otimes\cdots\otimes\alpha_p-\alpha_0\otimes
\alpha_1\alpha_2\otimes\cdots\otimes\alpha_p+\cdots+
(-1)^p\alpha_p\alpha_0\otimes\cdots\otimes \alpha_{p-1}
\]
In other words, as implied by the notation,
$\widehat{\mathrm{HC}}_{X,\bullet}({\mathrm J}_X\Lscr)$ is simply the
(completed) relative Hochschild chain complex of the $\Oscr_X$-algebra ${\mathrm
J}_X\Lscr$.

By the usual Leibniz rule ${}^G\nabla$ acts on
$\widehat{\mathrm{HC}}_{X,\bullet}({\mathrm J}_X\!L)$ and one easily
verifies that the action of ${}^G\nabla$ commutes with $\mathrm b_H$.
In \cite{CRVdB} (following \cite{CDH}) we defined the Hochschild
chain complex\footnote{In \cite{CRVdB} we used the notation
  $\mathrm{HC}^\Lscr_{\poly,X}$ for $\mathrm{HC}^{\Lscr}_{X,\bullet}$}
$\mathrm{HC}^{\Lscr}_{X,\bullet}$ of $(X,\Oscr_X,\Lscr)$ as the
invariants of $\widehat{\mathrm{HC}}_{X,\bullet}({\mathrm J}_X\!L)$
under ${}^G\nabla$.  Explicitly for an object $U\r X$ of the site
\begin{align*}
\mathrm{HC}^{\Lscr}_{X,p}(U)&=\widehat{\mathrm{HC}}_{X,p}({\mathrm J}_X
\Lscr)(U)^{{}^G\nabla}\\
&=\{\alpha\in \widehat{\mathrm{HC}}_{X,p}({\mathrm J}_X \Lscr)(U)\mid \forall l\in
\Lscr(U):
{}^G\nabla_l(\alpha)=0\}
\end{align*}
The reason for this somewhat roundabout way of defining the Hochschild
chain complex is technical. The idea is that the complicated formul\ae\ of
\cite{CR2}, valid for the ordinary Hochschild chain complex of
an algebra,  can be applied verbatim to
$\widehat{\mathrm{HC}}_{X,\bullet}({\mathrm J}_X\Lscr)$ which is also just an ordinary
(relative) Hochschild chain complex. We may then use the fact that these
formul\ae\  are invariant under ${}^G\nabla$ to descend them to
$\mathrm{HC}^{\Lscr}_{X,\bullet}$. This is a major work saving compared
to working directly with $\mathrm{HC}^{\Lscr}_{X,\bullet}$.

For use in the sequel we give a more direct description of
$\mathrm{HC}_{X,\bullet}^\Lscr$.
\begin{proposition}\label{ref-6.1-20}
We have as complexes
\begin{equation}
\label{ref-6.1-21}
\mathrm{HC}_{X,\bullet}^\Lscr\cong \bigoplus_{p\ge 0} ({\mathrm
J}_X\Lscr)^{\widehat\otimes_X p}
\end{equation}
with the differential on the right-hand side being given by
\begin{multline*}
\mathrm{b}_H(\alpha_1\otimes\cdots\otimes \alpha_p)=
\epsilon(\alpha_1)\alpha_2\otimes\cdots\otimes \alpha_p
-\alpha_1\alpha_2\otimes\cdots\alpha_p+\cdots \\\cdots+(-1)^{p-1}
\alpha_1\otimes\cdots\otimes\alpha_{p-1}\alpha_p+(-1)^{p}
\alpha_1\otimes\cdots\otimes\alpha_{p-1}\epsilon(\alpha_p)
\end{multline*}
The isomorphism \eqref{ref-6.1-21} is the restriction to
$\widehat{\mathrm{HC}}_{X,\bullet}({\mathrm J}_X
\Lscr)^{{}^G\nabla}=\mathrm{HC}^{\Lscr}_{X,\bullet}$ of the map
\begin{equation}
\label{ref-6.2-22}
\widehat{\mathrm{HC}}_{X,\bullet}({\mathrm J}_XL)\rightarrow \bigoplus_{p\ge 0}({\mathrm
J}_X\Lscr)^{\widehat\otimes_X p}
\end{equation}
which sends
\[
\alpha_0\otimes\alpha_1\otimes \cdots\otimes \alpha_p\in
\widehat{\mathrm{HC}}_{X,p}({\mathrm J}_XL)
\]
to
\[
\epsilon(\alpha_0)\alpha_1\otimes \cdots\otimes \alpha_p \in
\mathrm{HC}_{X,p}^\Lscr
\]
The map \eqref{ref-6.2-22} commutes with differentials.
\end{proposition}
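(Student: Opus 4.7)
The plan is to establish the proposition in two logically independent parts: (i) the map \eqref{ref-6.2-22} is a chain map, and (ii) its restriction to ${}^G\nabla$-invariants is a bijection onto the right-hand side of \eqref{ref-6.1-21}.

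For part (i), denote the map \eqref{ref-6.2-22} by $\phi$ and compute $\phi\circ\mathrm{b}_H$ term by term. The interior terms of $\mathrm{b}_H$, of the form $(-1)^i\alpha_0\otimes\cdots\otimes\alpha_i\alpha_{i+1}\otimes\cdots\otimes\alpha_p$ with $1\le i\le p-1$, map immediately to the matching interior terms of the target differential. For the extremal terms $\alpha_0\alpha_1\otimes\alpha_2\otimes\cdots\otimes\alpha_p$ and $(-1)^p\alpha_p\alpha_0\otimes\alpha_1\otimes\cdots\otimes\alpha_{p-1}$, one uses that $\epsilon$ is an $\Oscr_X$-algebra homomorphism so that $\epsilon(\alpha_0\alpha_1)=\epsilon(\alpha_0)\epsilon(\alpha_1)$ and $\epsilon(\alpha_p\alpha_0)=\epsilon(\alpha_p)\epsilon(\alpha_0)$. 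After these substitutions the two terms reproduce respectively $\epsilon(\beta_1)\beta_2\otimes\cdots\otimes\beta_p$ and $(-1)^p\beta_1\otimes\cdots\otimes\beta_{p-1}\epsilon(\beta_p)$ on the target side, with $\beta_1=\epsilon(\alpha_0)\alpha_1$ and $\beta_i=\alpha_i$ for $i\ge 2$. This step is a routine verification.

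For part (ii), I would rewrite $\widehat{\mathrm{HC}}_{X,p}({\mathrm J}_X\Lscr)={\mathrm J}_X\Lscr\ctimes_X M_p$ with $M_p:=({\mathrm J}_X\Lscr)^{\ctimes p}$ carrying its diagonal ${}^G\nabla$-connection, so that $\phi$ becomes $\epsilon\otimes\id_{M_p}$ and the ${}^G\nabla$-action on the source is the tensor-product connection ${}^G\nabla\otimes 1+1\otimes{}^G\nabla$. The proposition then reduces to the following general statement: \emph{for any $\Oscr_X$-module $M$ with a flat $\Lscr$-connection $\nabla_M$, the map $\epsilon\otimes\id_M$ restricts to an isomorphism from the $({}^G\nabla\otimes 1+1\otimes\nabla_M)$-invariants inside ${\mathrm J}_X\Lscr\ctimes_X M$ onto $M$.} Since horizontal sections form a subsheaf of the target, this is a local statement, so one may assume \eqref{new5} and the existence of a basis $\partial_1,\ldots,\partial_d$ of $\Lscr$ dual to the $x_i$. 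In such coordinates every element of ${\mathrm J}_X\Lscr\ctimes_X M$ has an expansion $s=\sum_I x^I\otimes m_I$ with $m_0=(\epsilon\otimes\id_M)(s)$, and the horizontality condition combined with the explicit local form of ${}^G\nabla$ on the $x_i$ forces the remaining coefficients $m_I$ to be uniquely and recursively determined by $m_0$ and the iterated covariant derivatives $\nabla_\partial^J(m_0)$; this yields both injectivity (a horizontal $s$ with $m_0=0$ must vanish) and surjectivity (given $m$, the explicit recursion defines the unique horizontal lift).

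The main obstacle is the sublemma in the previous paragraph, specifically the recursive determination of the $m_I$ from $m_0$, which is a formal Poincar\'e lemma for the Grothendieck connection. Carrying this out requires the explicit local expression of ${}^G\nabla$ on the lifts $x_1,\ldots,x_d$ of the dual basis of $\Lscr^*$, as provided by the structural description of jet bundles in the preliminaries and the appendix. Once the sublemma is established, the isomorphism \eqref{ref-6.1-21} is obtained by combining it with part (i).
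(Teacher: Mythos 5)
Your proposal is correct in substance but takes a more self-contained route than the paper. The paper disposes of the proposition in two lines: the chain-map property of \eqref{ref-6.2-22} is declared an easy verification (your part (i) is exactly that verification, and it is right --- the only points needing care are the multiplicativity of $\epsilon$ at the two extremal terms and the fact that $\epsilon(\alpha_p)\in\Oscr_X$ may be slid across the $\Oscr_X$-linear tensor product, both of which you use), while the bijectivity of the restriction to the ${}^G\nabla$-invariants is simply cited from \cite[Prop.~1.11]{CDH}. Your part (ii) is in effect a reconstruction of that cited result via the formal Poincar\'e lemma for the Grothendieck connection in the local coordinates \eqref{new5}; this is essentially how the result is proved in the literature, so nothing is lost, and the reader gains an actual argument where the paper offers only a pointer. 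Two small points deserve attention if you write it out in full. First, in the recursion determining the coefficients $m_I$ from $m_0$, the leading term of ${}^G\nabla_{\partial_i}(x_j)$ is $-\delta_{ij}$ only modulo higher order in the $x$'s (cf.\ Lemma \ref{ref-A.3-48}), so the recursion must proceed by total degree with the correction terms absorbed into the lower-order data; and for surjectivity you must check that the recursively built series is horizontal in \emph{all} directions simultaneously, which is where the flatness (integrability) of ${}^G\nabla\otimes 1+1\otimes\nabla_M$ enters --- you state flatness as a hypothesis of your sublemma but never flag where it is used. Second, a cleaner, coordinate-free inverse is available: under the identification ${\mathrm J}_X\Lscr\ctimes_X M\cong\uHom_X(\mathrm U_X\Lscr,M)$ the horizontal lift of $m$ is the $M$-valued jet $D\mapsto\nabla_D(m)$, whose horizontality and compatibility with $\epsilon\otimes\id_M$ are immediate. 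Neither point is a genuine gap, only a place where the write-up should be tightened.
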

\begin{proof} That the restriction of \eqref{ref-6.2-22} is an isomorphism
is proved in \cite[Prop.\ 1.11]{CDH}. That \eqref{ref-6.2-22} commutes with differentials
is an easy verification.
\end{proof}
The cap product of a
section $D=D_1\otimes\cdots\otimes D_p$ of $\mathrm{HC}_{\Lscr,X}^{p}$
and a section $\alpha=\alpha_0\otimes\cdots\otimes \alpha_q$ of
$\widehat{\mathrm{HC}}_{X,q}({\mathrm J}_X \Lscr)$ was in \cite[\S3.4]{CRVdB} defined as
\[
D\cap \alpha=\alpha_0{}^2\nabla_{D_1}\alpha_1\cdots {}^2\nabla_{D_p}\alpha_p\otimes
\alpha_{p+1}\otimes\cdots
\otimes\alpha_q
\]
and for $f\in \mathrm{HC}0_{\Lscr,X}=\Oscr_X$:
\[
f\cap \alpha=f\alpha_0\otimes\cdots
\otimes\alpha_q.
\]
One verifies that this cap product is compatible with differentials.
\[
\mathrm b_H(D\cap \alpha)=\mathrm d_H D\cap \alpha+(-1)^{|D|} D\cap \mathrm b_H\alpha
\]
The fact that ${}^G\nabla$ and ${}^2\nabla$ commute yields immediately
\[
{}^G\nabla_l(D\cap \alpha)=D\cap {}^G\nabla_l(\alpha)
\]
Hence $\cap$ descends to a cap product
\begin{equation}
\label{ref-6.3-23}
\cap:\mathrm{HC}_{\Lscr,X}^{\bullet}\times
\mathrm{HC}^\Lscr_{X,\bullet}\r\mathrm{HC}^\Lscr_{X,\bullet}
\end{equation}
compatible with the differentials.
\begin{proposition} For a section $D=D_1\otimes\cdots\otimes D_p$ of
$\mathrm{HC}_{\Lscr,X}^{p}$ and a section $\alpha=\alpha_1\otimes\cdots\otimes \alpha_q$
of
$\mathrm{HC}^{\Lscr}_{X,q}$ (using the identification \eqref{ref-6.1-21})
we have
\begin{equation}
\label{ref-6.4-24}
D\cap\alpha=\alpha_1(D_1)\cdots \alpha_p(D_p)\alpha_{p+1}\otimes\cdots\otimes \alpha_q
\end{equation}
and for $f\in \mathrm{HC}0_{\Lscr,X}=\Oscr_X$:
\[
f\cap \alpha=f\alpha_1\otimes\cdots
\otimes\alpha_q
\]
\end{proposition}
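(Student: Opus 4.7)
The plan is to compute $D\cap\alpha$ in the new form by lifting $\alpha$ to its unique $^G\nabla$-invariant preimage under \eqref{ref-6.2-22} and pushing it back through the cap product formula on $\widehat{\mathrm{HC}}_{X,\bullet}({\mathrm J}_X\Lscr)$. Since the cap product \eqref{ref-6.3-23} was defined precisely by taking the original formula on $\widehat{\mathrm{HC}}_{X,\bullet}({\mathrm J}_X\Lscr)$ and descending to invariants, everything reduces to a direct calculation once one has the pushdown under \eqref{ref-6.2-22}.

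Concretely, I would start by picking a lift $\beta\in\widehat{\mathrm{HC}}_{X,q}({\mathrm J}_X\Lscr)^{{}^G\nabla}$ of $\alpha=\alpha_1\otimes\cdots\otimes\alpha_q$, and write $\beta$ as a (completed) sum $\sum_i\beta_{i,0}\otimes\cdots\otimes\beta_{i,q}$. By definition of \eqref{ref-6.2-22},
\[
\sum_i \epsilon(\beta_{i,0})\,\beta_{i,1}\otimes\cdots\otimes\beta_{i,q}=\alpha_1\otimes\cdots\otimes\alpha_q.
\]
Applying the original cap product and then \eqref{ref-6.2-22} gives
\[
\sum_i \epsilon\bigl(\beta_{i,0}\,{}^2\nabla_{D_1}\beta_{i,1}\cdots{}^2\nabla_{D_p}\beta_{i,p}\bigr)\,\beta_{i,p+1}\otimes\cdots\otimes\beta_{i,q}.
\]
The two key observations to finish are: (i) $\epsilon:{\mathrm J}_X\Lscr\to\Oscr_X$ is an $\Oscr_X$-algebra homomorphism (immediate from $\Delta(1)=1\otimes 1$ together with \eqref{ref-3.6-8}), so the $\epsilon$ distributes across the product; (ii) $\epsilon({}^2\nabla_{D_j}\beta_{i,j})=({}^2\nabla_{D_j}\beta_{i,j})(1)=\beta_{i,j}(D_j)$, directly from the definition of ${}^2\nabla$. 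Substituting these, the above expression becomes
\[
\sum_i \epsilon(\beta_{i,0})\,\beta_{i,1}(D_1)\cdots\beta_{i,p}(D_p)\,\beta_{i,p+1}\otimes\cdots\otimes\beta_{i,q}.
\]

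The final step is to recognise this as the image of the relation $\sum_i\epsilon(\beta_{i,0})\,\beta_{i,1}\otimes\cdots\otimes\beta_{i,q}=\alpha_1\otimes\cdots\otimes\alpha_q$ under the continuous $\Oscr_X$-linear map $\mathrm{ev}_{D_1}\otimes\cdots\otimes\mathrm{ev}_{D_p}\otimes\mathrm{id}^{\otimes(q-p)}:({\mathrm J}_X\Lscr)^{\widehat\otimes_X q}\to({\mathrm J}_X\Lscr)^{\widehat\otimes_X(q-p)}$, yielding $\alpha_1(D_1)\cdots\alpha_p(D_p)\,\alpha_{p+1}\otimes\cdots\otimes\alpha_q$ as required. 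The formula for $f\cap\alpha$ with $f\in\Oscr_X$ is even easier: the cap product simply acts by multiplication on the zeroth slot, which commutes with $\epsilon$. No genuine obstacle arises; the only subtle point is ensuring that passing through a possibly infinite (completed) sum in $\beta$ is legitimate, but this is clear since all operations involved ($\epsilon$, evaluation at a fixed $D_j$, multiplication in ${\mathrm J}_X\Lscr$) are continuous $\Oscr_X$-linear in the ${\mathrm J}^c_X\Lscr$-adic topology.
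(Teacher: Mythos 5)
Your computation is correct and is precisely the ``straightforward verification'' that the paper leaves to the reader: lift $\alpha$ through \eqref{ref-6.2-22}, apply the cap product upstairs, push back down, and use that $\epsilon$ is an algebra map with $\epsilon({}^2\nabla_{D}\beta)=\beta(D)$. Nothing further is needed.
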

\begin{proof} This is a straightforward verification.
\end{proof}
\section{A digression}
The Hochschild cohomology as we have defined it is computed in
the\def\Dis{\operatorname{Dis}} category $\Mod({\mathrm J}_X\Lscr)$. Inside
$\Mod({\mathrm J}_X\Lscr)$ we have the full subcategory $\Dis({\mathrm J}_X\Lscr)$ of
modules whose sections are locally annihilated by powers of
$\mathrm J^c_X\Lscr$.
\begin{lemma}
\label{new1}
$\Dis({\mathrm J}_X\Lscr)$ is a Grothendieck subcategory of $\Mod(\mathrm{J}_X\Lscr)$.
\end{lemma}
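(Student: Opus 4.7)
The plan is to verify the three defining features of a Grothendieck subcategory in turn: that $\Dis({\mathrm J}_X\Lscr)$ is abelian, that it is cocomplete with exact filtered colimits, and that it admits a generator. The guiding observation is that $\Dis({\mathrm J}_X\Lscr)$ is the ``torsion subcategory'' for the ideal $\mathrm J^c_X\Lscr$, i.e.\ a localizing subcategory of $\Mod({\mathrm J}_X\Lscr)$, so the result is of the standard sort.

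First I would record the closure properties. The defining condition---every local section is locally annihilated by some power of $\mathrm J^c_X\Lscr$---is manifestly stable under passage to subobjects (the annihilator can only grow), quotients (the annihilator of an image contains that of a preimage), extensions (multiply annihilating exponents) and arbitrary direct sums (a section of $\bigoplus_i M_i$ locally involves only finitely many nonzero summands, each annihilated by a power of $\mathrm J^c_X\Lscr$). Thus $\Dis({\mathrm J}_X\Lscr)$ is a Serre subcategory of $\Mod({\mathrm J}_X\Lscr)$ closed under arbitrary coproducts.

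From these closure properties the remaining structural statements follow formally. Closure under kernels, cokernels and finite direct sums gives an abelian subcategory whose abelian structure is the one inherited from the ambient category. Filtered colimits in $\Mod({\mathrm J}_X\Lscr)$ are computed as quotients of direct sums, and both operations preserve $\Dis({\mathrm J}_X\Lscr)$; hence filtered colimits in the subcategory exist, coincide with those in $\Mod({\mathrm J}_X\Lscr)$, and are exact because they are so in the ambient Grothendieck category.

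Finally I would exhibit a generator. For each object $U\to X$ of the site, let $j_{U,!}$ denote extension by zero (left adjoint to restriction), and set $G_{U,n}:=j_{U,!}\bigl({\mathrm J}_X\Lscr/(\mathrm J^c_X\Lscr)^n|_U\bigr)$; this lies in $\Dis({\mathrm J}_X\Lscr)$ by step one. Take $G:=\bigoplus_{U,n} G_{U,n}$. If $M\in\Dis({\mathrm J}_X\Lscr)$ and $m\in M(U)$ is a section locally annihilated by $(\mathrm J^c_X\Lscr)^n$, then after refining the cover and using the adjunction one obtains a morphism $G_{U,n}\to M$ hitting $m$, so $G$ detects nonzero sections and hence is a generator. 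No step is genuinely hard; the only item demanding a little care is the construction of the building blocks $G_{U,n}$ on a ringed site, for which one follows the recipe of \cite[Prop.\ II.6.7]{SGA41} applied to the ringed site $(X,{\mathrm J}_X\Lscr)$ and then cuts down by a power of $\mathrm J^c_X\Lscr$.
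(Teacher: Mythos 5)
Your proposal is correct and follows essentially the same route as the paper: the paper's proof simply notes that $\Dis({\mathrm J}_X\Lscr)$ is an abelian subcategory closed under colimits and then exhibits the generators $j_{!}(\mathrm{J}_U\Lscr_U)/(\mathrm{J}^c_U\Lscr_U)^n$, which are exactly your objects $G_{U,n}$; you have merely spelled out the closure properties and the adjunction argument that the paper leaves implicit.
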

\begin{proof} $\Dis({\mathrm J}_X\Lscr)$ is clearly an abelian
  subcategory of $\Mod(\mathrm{J}_X\Lscr)$ which is closed under
  colimits. Hence it remains to construct a set of generators. The
  objects $j_{!}(\mathrm{J}_U\Lscr_U)/(\mathrm{J}^c_U\Lscr_U)^n$ where $j:U\r X$ runs
through the
  objects of the site and $n$ is arbitrary, do the job.
\end{proof}
Since  $\Oscr_X\in \Dis({\mathrm J}_X\Lscr)$ this suggests the following
alternative definition for Hochschild cohomology
\[
\HH^n_{\Lscr,\mathrm{dis}}(X)=\operatorname{Ext}^n_{\Dis({\mathrm
J}_X\Lscr)}(\mathcal{O}_X,\mathcal{O}_X)\\
\]
We show below that this yields in fact the same result as before. Along
the way we will prove some technical results needed later.

For $\Kscr\in \Dis({\mathrm J}_X\Lscr)$ let $
\uRHom_{\Dis({\mathrm J}_X\Lscr)}(\Kscr,-)
$
be the right derived functor of
$
\uHom_{\Dis({\mathrm J}_X\Lscr)}(\Kscr,-)
$
which sends $\Fscr\in \Dis({\mathrm J}_X\Lscr)$ to the sheaf $U\mapsto\Hom_{{\mathrm
J}_U\Lscr}(\Kscr{|}U,\Fscr{\mid} U)$.
The exactness of $j_!$ implies that injectives in
$\Dis(\mathrm{J}_X\Lscr)$ are preserved under restriction.  This
implies that $ \uRHom_{\Dis({\mathrm J}_X\Lscr)}(\Kscr,-) $ is
compatible with restriction.

\begin{lemma}
\label{ref-7.1-25}
 Let $\Mscr\in \Dis({\mathrm J}_X\Oscr_X)$. The natural map
\begin{equation}
\label{ref-7.1-26}
\uRHom_{\Dis({\mathrm J}_X\Lscr)}(\mathcal{O}_X,\Mscr)\r
\uRHom_{{\mathrm J}_X\Lscr}(\mathcal{O}_X,\Mscr)
\end{equation}
is an isomorphism.
\end{lemma}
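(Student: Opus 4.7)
The plan is to deduce the statement from a vanishing result for the higher right-derived functors of a torsion functor applied to modules in $\Dis$.

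By Lemma~\ref{new1}, the inclusion $\iota : \Dis({\mathrm J}_X\Lscr) \hookrightarrow \Mod({\mathrm J}_X\Lscr)$ is an exact, fully faithful functor, and it admits a right adjoint
\[
\Gamma : \Mod({\mathrm J}_X\Lscr) \longrightarrow \Dis({\mathrm J}_X\Lscr)
\]
sending $\Nscr$ to the subsheaf of sections locally annihilated by some power of $\mathrm J^c_X\Lscr$. Since $\iota$ is exact, $\Gamma$ preserves injectives, and deriving the adjunction in the second variable yields
\[
\uRHom_{{\mathrm J}_X\Lscr}(\iota\Kscr,\Nscr) \;\simeq\; \uRHom_{\Dis({\mathrm J}_X\Lscr)}\bigl(\Kscr,R\Gamma(\Nscr)\bigr)
\]
for $\Kscr \in \Dis({\mathrm J}_X\Lscr)$ and $\Nscr \in D(\Mod({\mathrm J}_X\Lscr))$. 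Applied to $\Kscr = \Oscr_X$ and $\Nscr = \Mscr \in \Dis$, the map \eqref{ref-7.1-26} is identified with the map induced by the canonical morphism $\Mscr \to R\Gamma(\Mscr)$ in $D(\Dis({\mathrm J}_X\Lscr))$. Thus the lemma is equivalent to the vanishing $R^i\Gamma(\Mscr) = 0$ for $i > 0$ and $\Mscr \in \Dis$.

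This vanishing is local on $X$ (since $\Gamma$ and restriction along $U \to X$ commute), and by \eqref{new5} we may reduce to the case ${\mathrm J}_X\Lscr = \Oscr_X[[x_1,\ldots,x_d]]$ with $\mathrm J^c_X\Lscr = (x_1,\ldots,x_d)$. In this affine situation $R\Gamma$ is ordinary local cohomology at the finitely generated ideal $(x_1,\ldots,x_d)$, computed by the \v{C}ech complex
\[
\check{C}^\bullet(\Mscr) :\quad \Mscr \to \bigoplus_i \Mscr[x_i^{-1}] \to \bigoplus_{i<j} \Mscr[x_i^{-1},x_j^{-1}] \to \cdots \to \Mscr[x_1^{-1},\ldots,x_d^{-1}].
\]
For $\Mscr \in \Dis$ every section is locally killed by some power of each $x_i$, so each localization $\Mscr[x_i^{-1}]$ (and hence every further localization) vanishes. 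Therefore $\check{C}^\bullet(\Mscr)$ collapses to $\Mscr$ in degree~$0$, giving $R^i\Gamma(\Mscr) = 0$ for $i > 0$.

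The main technical point---and likely obstacle---is to verify that the \v{C}ech complex correctly computes $R\Gamma$ in the present ringed-site setting, rather than merely over a Noetherian commutative ring. This can be handled via the general observation that the localization functors $(-)[x_i^{-1}]$ are exact and vanish identically on $\Dis$, so that the local cohomology spectral sequence associated to the \v{C}ech complex degenerates trivially.
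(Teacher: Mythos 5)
Your reduction to a statement about derived torsion is sound: the inclusion $\iota:\Dis({\mathrm J}_X\Lscr)\hookrightarrow\Mod({\mathrm J}_X\Lscr)$ is exact with right adjoint the torsion functor $\Gamma$, so $\Gamma$ preserves injectives and the derived adjunction identifies \eqref{ref-7.1-26} with the map induced by $\Mscr\to R\Gamma(\Mscr)$ (this makes the vanishing $R^{i}\Gamma(\Mscr)=0$ for $i>0$ \emph{sufficient} rather than equivalent, but sufficiency is all you use). The genuine gap is exactly the step you flag at the end: the claim that $R\Gamma$ is computed by the \v{C}ech complex. Observing that the localizations $\Mscr[x_i^{-1}]$ vanish for $\Mscr\in\Dis({\mathrm J}_X\Lscr)$ computes the cohomology of $\check{C}^\bullet(\Mscr)$; it says nothing about whether that cohomology agrees with $R^\bullet\Gamma(\Mscr)$, which is the whole point. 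There is no ``spectral sequence associated to the \v{C}ech complex'' whose degeneration settles this: the comparison $\check{C}^\bullet\otimes(-)\simeq R\Gamma(-)$ is a theorem requiring a hypothesis on the generating sequence (weak proregularity), and here you are not over a Noetherian ring, nor even over a ring --- ${\mathrm J}_X\Lscr$ is a sheaf of rings on a site, locally $\Oscr_X[[x_1,\ldots,x_d]]$ with $\Oscr_X$ arbitrary. As written, the central step is asserted, not proved.

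The gap can be closed, but doing so brings you back to Koszul complexes. Since $(x_1,\ldots,x_d)$ is a regular sequence in $\Oscr_X[[x_1,\ldots,x_d]]$ and powers of a regular sequence are again regular, each Koszul complex $K_\bullet(x_1^n,\ldots,x_d^n)$ is a finite free resolution of ${\mathrm J}_X\Lscr/(x_1^n,\ldots,x_d^n)$. Writing $\check{C}^\bullet=\dirlim_n\uHom_{{\mathrm J}_X\Lscr}(K_\bullet(x^n),{\mathrm J}_X\Lscr)$ and using exactness of $\uHom_{{\mathrm J}_X\Lscr}(-,E)$ for $E$ injective, one finds that $H^i(\check{C}^\bullet\otimes E)=\dirlim_n\uHom_{{\mathrm J}_X\Lscr}(H_i(K_\bullet(x^n)),E)$ vanishes for $i>0$ and equals $\Gamma(E)$ for $i=0$; flatness of the terms $\check{C}^p$ then identifies $\check{C}^\bullet\otimes(-)$ with $R\Gamma(-)$. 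With that input your argument goes through and is a genuinely different route from the paper's, which instead proves directly that injectives $E$ of $\Dis({\mathrm J}_X\Lscr)$ satisfy $\uExt^{n}_{{\mathrm J}_X\Lscr}(\Oscr_X,E)=0$ for $n>0$ by truncating the Koszul resolution of $\Oscr_X$ modulo powers of the augmentation ideal, so as to obtain resolutions ${}^pK_\bullet$ lying in $\Dis({\mathrm J}_X\Lscr)$, and then passing to the colimit over $p$. Both arguments ultimately rest on the same regular-sequence computation; the paper's version avoids localization and the proregularity formalism entirely.
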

\begin{proof}
We may check this locally.
Therefore we may assume that $\Lscr$ is free
over $\Oscr_X$ and ${\mathrm J}_X\Lscr=\Oscr_X[[x_1,\dots,x_d]]$.

Let $E$ be an injective object in $\Dis({\mathrm J}_X\Lscr)$. We need to check that
$\mathcal{E}\mathit{xt}^n_{{\mathrm J}_X\Lscr}(\Oscr_X,E)=0$ for $n>0$.

Let $K_\bullet=\Oscr_X[[x_1,\ldots,x_d]][\xi_1,\ldots,x_d]$ be the Koszul resolution of
$\Oscr_X$ associated to
the regular sequence $(x_1,\ldots,x_d)$ in ${\mathrm J}_X\Lscr$ (with differential
$d\xi_i=x_i$). Then
\[
\uRHom_{{\mathrm J}_X\Lscr}(\Oscr_X,E)=\uHom_{{\mathrm J}_X\Lscr}(K^\bullet,E)
\]
Now put for $p\ge 1$
\[
{}^p K_\bullet=K_\bullet/(x_1,\ldots,x_d,\xi_1,\ldots,\xi_d)^p
\]
Passing to associated graded objects it is easy to see that ${}^p
K_\bullet$ (equipped with the differential inherited from $K_\bullet$)
is a resolution of $\Oscr_X$. Since ${}^p K_\bullet$ is a complex in
$\Dis({\mathrm J}_X\Lscr)$ and $E$ is injective in $\Dis({\mathrm J}_X\Lscr)$ we find
\[
H^n(\uHom_{{\mathrm J}_X\Lscr}({}^p K_\bullet,E))=\begin{cases}
0&\text{$n>0$}\\
\uHom_{{\mathrm J}_X\Lscr}(\Oscr_X,E)&\text{$n=0$}
\end{cases}
\]
We find for $n>0$:
\begin{align*}
H^n(\uHom_{{\mathrm J}_X\Lscr}( K_\bullet,E))&=H^n(\dirlim_p \uHom_{{\mathrm
J}_X\Lscr}({}^p K_\bullet,E))\\
&=\dirlim_p H^n(\uHom_{{\mathrm J}_X\Lscr}({}^p K_\bullet,E))\\
&=0
\end{align*}
The first line is based on the observation that for any $\Mscr\in
\Dis({\mathrm J}_X\Lscr)$ we have
\[
\dirlim_p \uHom_{\mathrm J_X\mathcal L}({\mathrm J}_X\Lscr/({\mathrm
J}_X^c\Lscr)^p,\Mscr)=\Mscr \qed
\]
\def\qed{}\end{proof}
\begin{lemma}
  \label{ref-7.2-27} For any $\mathcal K$, $\mathcal L$ in
  $\Dis(\mathrm J_X\mathcal L)$ there is the following identity
\begin{equation}
\label{ref-7.2-28}
\RHom_{\Dis({\mathrm J}_X\Lscr)}(\Kscr,\Lscr)=R\Gamma(X,\uRHom_{\Dis({\mathrm
J}_X\Lscr)}(\Kscr,\Lscr))
\end{equation}
in $D(\operatorname{Ab})$.
\end{lemma}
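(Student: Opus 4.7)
The plan is to invoke the Grothendieck composition-of-derived-functors paradigm for the factorization
\[
\Hom_{\Dis(\mathrm{J}_X\Lscr)}(\Kscr,-)=\Gamma(X,-)\circ\uHom_{\Dis(\mathrm{J}_X\Lscr)}(\Kscr,-).
\]
Since $\Dis(\mathrm{J}_X\Lscr)$ is a Grothendieck category (Lemma~\ref{new1}), we may pick a K-injective resolution $\Lscr\to I^\bullet$ in $\Dis(\mathrm{J}_X\Lscr)$, and by Spaltenstein-type constructions we may further assume $I^\bullet$ is termwise injective. By definition $\uRHom_{\Dis(\mathrm{J}_X\Lscr)}(\Kscr,\Lscr)\simeq \uHom_{\Dis(\mathrm{J}_X\Lscr)}(\Kscr,I^\bullet)$ and $\RHom_{\Dis(\mathrm{J}_X\Lscr)}(\Kscr,\Lscr)\simeq \Hom_{\Dis(\mathrm{J}_X\Lscr)}(\Kscr,I^\bullet)$, so~\eqref{ref-7.2-28} reduces to the claim that $R\Gamma(X,-)$ of the complex $\uHom_{\Dis(\mathrm{J}_X\Lscr)}(\Kscr,I^\bullet)$ is computed by its global sections.

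The central step is to show that for each injective object $I$ of $\Dis(\mathrm{J}_X\Lscr)$, the sheaf $\uHom_{\Dis(\mathrm{J}_X\Lscr)}(\Kscr,I)$ is flabby, hence $\Gamma$-acyclic. Let $j\colon V\hookrightarrow U$ be a monomorphism in the site and $\varphi\colon \Kscr|_V\to I|_V$ a section of the sheaf-Hom over $V$. Via the $(j_!,j^{*})$-adjunction, $\varphi$ corresponds to a morphism $j_!(\Kscr|_V)\to I|_U$. Since $j_!$ is exact on modules and the canonical map $j_!(\Kscr|_V)\to \Kscr|_U$ is a monomorphism, the preservation of injectives under restriction (recorded explicitly just before the lemma) implies that $I|_U$ is injective in $\Dis(\mathrm{J}_U\Lscr_U)$, so this morphism extends to $\Kscr|_U\to I|_U$, yielding the required lift of $\varphi$. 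Surjectivity of restriction maps is exactly flabbiness.

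Once termwise $\Gamma$-acyclicity is established, in the bounded-below setting the identity~\eqref{ref-7.2-28} is a formal consequence of the classical Grothendieck spectral sequence. In the unbounded case one needs the stronger statement that the entire complex $\uHom_{\Dis(\mathrm{J}_X\Lscr)}(\Kscr,I^\bullet)$ is $R\Gamma$-acyclic; this is where one invokes the K-flabby (or K-limp) resolution machinery of~\cite{Spaltenstein,TLS}, which guarantees that a K-injective complex with termwise $\Gamma$-acyclic components has $R\Gamma$ computed by $\Gamma$ termwise. This unbounded upgrade is the main technical hurdle of the argument; once it is in hand, the two derived Homs agree in $D(\operatorname{Ab})$ as claimed.
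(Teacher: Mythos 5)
Your reduction to showing that $\uHom_{\Dis(\mathrm{J}_X\Lscr)}(\Kscr,E)$ is $\Gamma$-acyclic for each injective $E$ of $\Dis(\mathrm{J}_X\Lscr)$ is the right one, and your extension argument via the $(j_!,j^{*})$-adjunction does establish flabbiness, hence acyclicity, when $X$ is a ringed \emph{space} --- but that is precisely the case the paper dismisses in one line (``one verifies immediately that $\Nscr$ is flabby''). The gap is that the lemma is stated over a ringed \emph{site}, and there your central step fails: on a general site ``surjectivity of restriction maps along monomorphisms'' is not a workable notion of flabbiness (coverings need not consist of monomorphisms, the counit $j_!j^{*}\Kscr\to\Kscr$ need not be a monomorphism, and the Zorn-type gluing that underlies ``flabby $\Rightarrow$ acyclic'' for spaces has no analogue), so $\Gamma$-acyclicity does not follow from what you prove. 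The paper's proof is devoted entirely to this case and runs differently: it represents a class in $\Ext^n_{\underline{\ZZ}_X}(\underline{\ZZ}_X,\Nscr)$ using a flat resolution $\Gscr^\bullet\to\underline{\ZZ}_X$, applies the tensor-$\Hom$ adjunction to identify $\Hom_{\underline{\ZZ}_X}(\Gscr^\bullet,\uHom_{\mathrm{J}_X\Lscr}(\Kscr,E))$ with $\Hom_{\mathrm{J}_X\Lscr}(\Gscr^\bullet\otimes_{\underline{\ZZ}_X}\Kscr,E)$, notes that $\Gscr^\bullet\otimes_{\underline{\ZZ}_X}\Kscr$ is a resolution of $\Kscr$ by objects of $\Dis(\mathrm{J}_X\Lscr)$, and concludes vanishing in positive degrees from the injectivity of $E$ in $\Dis(\mathrm{J}_X\Lscr)$. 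You would need to replace your flabbiness step by an argument of this kind, or else restrict the statement to ringed spaces.

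A secondary point: since $\Kscr$ and $\Lscr$ are objects of the abelian category $\Dis(\mathrm{J}_X\Lscr)$ and not unbounded complexes, an ordinary bounded-below injective resolution of $\Lscr$ suffices and the composite-functor identity is the classical Grothendieck spectral sequence; the closing appeal to K-injective and K-limp resolutions is not needed here.
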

\begin{proof}
To check \eqref{ref-7.2-28} we need to verify that if $E$ is an
injective object in $\Dis({\mathrm J}_X\Lscr)$ then
$\Nscr=\uHom_{{\mathrm J}_X\Lscr}(\Kscr,E)$ is acyclic for $\Gamma(X,-)=
\Hom_{\underline{\ZZ}_X}(\underline{\ZZ}_X,-)$.  This is trivial if we are on a space
since one
verifies immediately that $\Nscr$ is flabby. If $X$ is a site then we
can proceed as follows.  By general properties of $\Ext$ an element $\alpha$ of
$\Ext^n_{\underline{\ZZ}_X}(\underline{\ZZ}_X,\Nscr)$ is represented by an element in
$H^n(\Hom_{\underline{\ZZ}_X}(\Gscr^\bullet,\Nscr))$ for some resolution
$\Gscr^\bullet\r \underline{\ZZ}_X\r 0$ in $\Mod(\underline{\ZZ}_X)$ and by resolving
$\Gscr^\bullet$ further we may without loss of
generality assume that $\Gscr^{\bullet}$ is flat.  Then we have
\begin{align*}
H^n(\Hom_{\underline{\ZZ}_X}(\Gscr^\bullet,\Nscr))&=H^n(\Hom_{\underline{\ZZ}_X}(\Gscr^\bullet,\uHom_{{\mathrm
J}_X\Lscr}(\Kscr,E)))\\
&=H^n(\Hom_{{\mathrm J}_X\Lscr}(\Gscr^\bullet\otimes_{\underline{\ZZ}_X}\Kscr,E))
\end{align*}
where ${\mathrm J}_X\Lscr$ acts on the second factor of
$\Gscr^\bullet\otimes_{\underline{\ZZ}_X}\Kscr$.
Since $\Gscr^\bullet\r \underline{\ZZ}_X\r 0$ consist entirely of flat
$\underline{\ZZ}_X$ modules we have
\[
H^n(\Gscr^\bullet\otimes_{\underline{\ZZ}_X}\Kscr)=
\begin{cases}
\Kscr&\text{if $n=0$}\\
0&\text{otherwise}
\end{cases}
\]
Since $\Gscr^\bullet\otimes_{\underline{\ZZ}_X}\Kscr$ is a complex in
$\Dis({\mathrm J}_X\Lscr)$ and $E$ was assumed to be injective in
$\Dis({\mathrm J}_X\Lscr)$ we conclude that for $n>0$
\begin{align*}
H^n(\Hom_{\underline{\ZZ}_X}(\Gscr^\bullet,\Nscr))&=H^n(\Hom_{{\mathrm
J}_X\Lscr}(\Gscr^\bullet\otimes_{\underline{\ZZ}_X}\Kscr,E))\\
&=
\Hom_{{\mathrm J}_X\Lscr}(H^n(\Gscr^\bullet\otimes_{\underline{\ZZ}_X}\Kscr),E)\\
&=0
\end{align*}
Hence $\alpha=0$. Since this holds for any element of
$\Ext^n_{\underline{\ZZ}_X}(\underline{\ZZ}_X,\Nscr)$ we
conclude $\Ext^n_{\underline{\ZZ}_X}(\underline{\ZZ}_X,\Nscr)=0$.
\end{proof}
\begin{proposition} The natural map
\[
\HH^n_{\Lscr,\mathrm{dis}}(X)\r \HH^n_{\Lscr}(X)
\]
is an isomorphism.
\end{proposition}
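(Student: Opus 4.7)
The plan is to exhibit a single resolution of $\Oscr_X$ that simultaneously computes both Ext groups. Choose an injective resolution $\Oscr_X \to E^\bullet$ in the Grothendieck category $\Dis({\mathrm J}_X\Lscr)$ (which exists by Lemma \ref{new1}). Since $\Dis({\mathrm J}_X\Lscr) \subset \Mod({\mathrm J}_X\Lscr)$ is a full subcategory and $\Oscr_X$ lies in $\Dis({\mathrm J}_X\Lscr)$, the two cochain complexes $\Hom_{\Dis({\mathrm J}_X\Lscr)}(\Oscr_X, E^\bullet)$ and $\Hom_{{\mathrm J}_X\Lscr}(\Oscr_X, E^\bullet)$ are literally equal. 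The former computes $\HH^n_{\Lscr,\dis}(X)$ by construction, so it suffices to prove that $E^\bullet$ is also a resolution acyclic for $\Ext^\bullet_{{\mathrm J}_X\Lscr}(\Oscr_X,-)$, that is, $\Ext^n_{{\mathrm J}_X\Lscr}(\Oscr_X, E^p) = 0$ for all $p$ and all $n > 0$. Under this acyclicity, the natural comparison map is realised as the identity on the common complex, and the proposition follows.

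To establish the acyclicity, I would invoke the local-to-global spectral sequence for $\Ext$ in $\Mod({\mathrm J}_X\Lscr)$,
\[
E_2^{i,j} = H^i\bigl(X, \uExt^j_{{\mathrm J}_X\Lscr}(\Oscr_X, E^p)\bigr) \Longrightarrow \Ext^{i+j}_{{\mathrm J}_X\Lscr}(\Oscr_X, E^p).
\]
Lemma \ref{ref-7.1-25} forces $\uExt^j_{{\mathrm J}_X\Lscr}(\Oscr_X, E^p) = 0$ for $j > 0$, so the spectral sequence collapses to
\[
\Ext^n_{{\mathrm J}_X\Lscr}(\Oscr_X, E^p) = H^n\bigl(X, \uHom_{{\mathrm J}_X\Lscr}(\Oscr_X, E^p)\bigr).
\]
To kill the remaining sheaf cohomology, apply Lemma \ref{ref-7.2-27} with $\Kscr = \Oscr_X$ and second argument $E^p$: since $E^p$ is injective in $\Dis({\mathrm J}_X\Lscr)$, the complex $\RHom_{\Dis({\mathrm J}_X\Lscr)}(\Oscr_X, E^p)$ is concentrated in degree zero, so the lemma forces $R\Gamma(X, \uHom_{{\mathrm J}_X\Lscr}(\Oscr_X, E^p))$ into degree zero as well. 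Combined with the collapse above, this gives the desired vanishing.

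The main subtlety lies in the initial choice to resolve inside $\Dis({\mathrm J}_X\Lscr)$ rather than in $\Mod({\mathrm J}_X\Lscr)$: resolving in the larger category makes neither preliminary lemma directly applicable, since both crucially exploit injectivity within the smaller subcategory. Once this choice is made, the argument is a formal juxtaposition of Lemmas \ref{ref-7.1-25} and \ref{ref-7.2-27}, which is why those two preliminaries were set up in precisely this form.
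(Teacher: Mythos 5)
Your argument is correct and is essentially the paper's: the paper likewise reduces the statement to Lemmas \ref{ref-7.1-25} and \ref{ref-7.2-27} via the two local-to-global spectral sequences, only phrased as a direct comparison of $\RHom_{\Dis({\mathrm J}_X\Lscr)}(\Oscr_X,\Oscr_X)$ with $\RHom_{{\mathrm J}_X\Lscr}(\Oscr_X,\Oscr_X)$ rather than as a term-by-term acyclicity statement for an injective resolution in $\Dis({\mathrm J}_X\Lscr)$. Both packagings rest on the same facts, proved in those two lemmas, that injectives of $\Dis({\mathrm J}_X\Lscr)$ are acyclic for $\uHom_{{\mathrm J}_X\Lscr}(\Oscr_X,-)$ and that their images under $\uHom_{{\mathrm J}_X\Lscr}(\Oscr_X,-)$ are acyclic for $\Gamma(X,-)$.
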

\begin{proof}
We need to prove that the natural map
\begin{equation}
\label{ref-7.3-29}
\RHom_{\Dis({\mathrm J}_X\Lscr)}(\mathcal{O}_X,\mathcal{O}_X)\r
\RHom_{{\mathrm J}_X\Lscr}(\mathcal{O}_X,\mathcal{O}_X)
\end{equation}
is an isomorphism in $D(\operatorname{Ab})$.

By the local global spectral sequences for $\RHom_{{\mathrm J}_X\Lscr}(-,-)$ and
$\RHom_{\Dis({\mathrm J}_X\Lscr)}(-,-)$ (Lemma \ref{ref-7.2-27}) this reduces to Lemma
\ref{ref-7.1-25}.
\end{proof}
\section{The bar resolution}
The $\Lscr$ bar complex is defined as
\[
\Bscr^{\Lscr}_{X,\bullet}=\bigoplus_{p\geq 0} ({\mathrm J}_X\Lscr)^{\widehat{\otimes}_{X}
p+1}
\]
with differential
\[
\mathrm b'_H(\alpha_0\otimes\cdots\otimes \alpha_p)=
\alpha_0\alpha_1\otimes\cdots\otimes\alpha_p-\alpha_0\otimes\alpha_1\alpha_2
\otimes\cdots\alpha_p+(-1)^p\alpha_0\otimes\cdots\otimes\alpha_{p-1}\epsilon(\alpha_p)
\]
We consider $\Bscr^{\Lscr}_{X,\bullet}$ as a ${\mathrm J}_X\Lscr$-module via
\[
\alpha\cdot (\alpha_0\otimes\cdots\otimes \alpha_p)=\alpha\alpha_0\otimes
\alpha_1\otimes\cdots\otimes\alpha_p
\]
Clearly $\mathrm b'_H$ is ${\mathrm J}_X\Lscr$-linear.
The map $\epsilon:{\mathrm J}_X\Lscr=\Bscr^{\Lscr}_{X,0}\r \Oscr_X$ defines an
${\mathrm J}_X\Lscr$ augmentation for $\Bscr^{\Lscr}_{X,\bullet}$.
\begin{proposition} \label{ref-8.1-30} The bar complex is a resolution of $\Oscr_X$ as
a ${\mathrm J}_X\Lscr$-module.
\end{proposition}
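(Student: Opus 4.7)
The plan is to exhibit a contracting homotopy on the augmented complex
$\cdots\to\Bscr^{\Lscr}_{X,1}\to\Bscr^{\Lscr}_{X,0}\xrightarrow{\epsilon}\Oscr_X\to 0$,
mimicking the classical argument for the one-sided bar resolution of an augmented associative algebra. Concretely, I would define
$$
s:\Bscr^{\Lscr}_{X,p}\longrightarrow \Bscr^{\Lscr}_{X,p+1},\qquad
\alpha_0\otimes\cdots\otimes\alpha_p\longmapsto 1\otimes\alpha_0\otimes\cdots\otimes\alpha_p,
$$
where $1$ denotes the unit of ${\mathrm J}_X\Lscr$ (i.e.\ the counit $\epsilon$ in the identification ${\mathrm J}_X\Lscr=\uHom_X(\mathrm U_X\Lscr,\Oscr_X)$), and let $\eta:\Oscr_X\to\Bscr^{\Lscr}_{X,0}$ be the unit $f\mapsto f\cdot 1$.

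The main step is a routine sign-and-bookkeeping verification, exactly as in the algebraic case: for $p\ge 1$ the identity $\mathrm b'_H s+s\,\mathrm b'_H=\mathrm{id}$ holds, since inside $\mathrm b'_H s(\alpha_0\otimes\cdots\otimes\alpha_p)$ the leading term $\alpha_0\otimes\cdots\otimes\alpha_p$ survives and the remaining terms cancel against $s\,\mathrm b'_H(\alpha_0\otimes\cdots\otimes\alpha_p)$ pairwise (including the boundary term involving $\epsilon(\alpha_p)$); and for $p=0$ one has $\mathrm b'_H s(\alpha)=\alpha-\eta\epsilon(\alpha)$. Together these identities show that $(\Bscr^{\Lscr}_{X,\bullet},\epsilon)$ is contractible as a complex of sheaves, hence a fortiori acyclic, which is what the proposition asserts.

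The only subtlety worth commenting on is that $\Bscr^{\Lscr}_{X,p}=({\mathrm J}_X\Lscr)^{\widehat\otimes_X p+1}$ is a \emph{completed} tensor product with respect to the $\mathrm J^c_X\Lscr$-adic filtration on each factor. One must therefore check that $s$ is well-defined on the completion. This is immediate: prepending the tensor factor $1\in {\mathrm J}_X\Lscr$ does not change the filtration degree of an element, so $s$ is continuous for the induced filtrations on the $(p+1)$- and $(p+2)$-fold tensor products and extends uniquely to their completions. I therefore do not expect a real obstacle; the small technical point to be careful about is precisely this continuity of $s$ on the completed tensor products.

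Finally, a remark on linearity: neither $s$ nor $\eta$ is ${\mathrm J}_X\Lscr$-linear, but this is harmless. Exactness of a complex in $\Mod({\mathrm J}_X\Lscr)$ is checked on the underlying complex of sheaves of abelian groups (or of $\Oscr_X$-modules), so any contracting homotopy defined at that level suffices to conclude that $\Bscr^{\Lscr}_{X,\bullet}$ is a resolution of $\Oscr_X$ in $\Mod({\mathrm J}_X\Lscr)$.
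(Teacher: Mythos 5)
Your proof is correct and is essentially the paper's own argument: the paper likewise constructs the contracting homotopy $h_p(\alpha_0\otimes\cdots\otimes\alpha_p)=1\otimes\alpha_0\otimes\cdots\otimes\alpha_p$ together with $h_{-1}(f)=f\cdot 1$, working at the level of sheaves of abelian groups. Your added remarks on continuity with respect to the completed tensor products and on the homotopy not being ${\mathrm J}_X\Lscr$-linear are accurate and only make explicit points the paper leaves implicit.
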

\begin{proof} We need to prove that
\[
\Bscr^{\Lscr}_{X,\bullet}\xrightarrow{\epsilon} \Oscr_X\r 0
\]
is acyclic. To this end it suffices to construct a contracting homotopy as sheaves of
abelian groups.
We do this as follows: we define $h_{-1}:\Oscr_X\r \Bscr^{\Lscr,0}={\mathrm J}_X\Lscr$ as
$h_{-1}(f)=f\cdot 1$ and for $p\ge 0$ we put
\[
h_p(\alpha_0\otimes\cdots \otimes \alpha_p)=1\otimes \alpha_0\otimes\cdots
\otimes \alpha_p
\]
It is easy to verify that this is indeed a contracting homotopy.
\end{proof}
We need a variant on the construction of $\Bscr_{X,p}^{\Lscr}$. Put
${}^q\!{\mathrm J}_X\Lscr={\mathrm J}_X\Lscr/(\mathrm J^c_X\Lscr)^q$. Define
\[
{}^q\Bscr_{X,\bullet}^{\Lscr}=\bigoplus_{p\geq 0} ({}^q\!{\mathrm J}_X\Lscr)^{\otimes_{X}
p+1}
\]
In the same way as in the proof of Proposition \ref{ref-8.1-30} one
proves that ${}^q\Bscr_{X,\bullet}^{\Lscr}$ is a resolution of $\Oscr_X$.
\begin{lemma}
For $\Mscr\in \Dis({\mathrm J}_X\Lscr)$ we have
\begin{equation}
\label{ref-8.1-31}
\begin{aligned}
\uHom^{\text{cont}}_{{\mathrm J}_X\Lscr}(\Bscr_{X,p}^\Lscr,\Mscr)&=\dirlim_q
\uHom_{{\mathrm J}_X\Lscr}({}^q\Bscr_{X,p}^\Lscr,\Mscr)\\
&=\injlim_q\uHom_{\Oscr_X}(({}^q\!{\mathrm J}_X\Lscr)^{\otimes_{X} p},\Mscr)
\end{aligned}
\end{equation}
\end{lemma}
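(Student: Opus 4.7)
The plan is to establish the two asserted equalities in order, treating them as independent steps.

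\textbf{First equality.} By construction $\Bscr_{X,p}^\Lscr=({\mathrm J}_X\Lscr)^{\widehat{\otimes}_X p+1}$ is the inverse limit $\varprojlim_q {}^q\Bscr_{X,p}^\Lscr$, and its natural topology admits as fundamental system of open neighborhoods of $0$ the kernels $K_q=\ker(\Bscr_{X,p}^\Lscr\twoheadrightarrow{}^q\Bscr_{X,p}^\Lscr)$. Since $\Mscr\in\Dis({\mathrm J}_X\Lscr)$ carries the discrete topology, any continuous ${\mathrm J}_X\Lscr$-linear morphism $\Bscr_{X,p}^\Lscr\to\Mscr$ (locally) annihilates some $K_q$ and thus factors through ${}^q\Bscr_{X,p}^\Lscr$. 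Conversely, every ${\mathrm J}_X\Lscr$-linear map out of some ${}^q\Bscr_{X,p}^\Lscr$ lifts to a continuous one on the completion. This gives the first identity.

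\textbf{Second equality.} I would use the tensor--hom adjunction on the first tensor factor, which carries the ${\mathrm J}_X\Lscr$-module structure. Writing
\[
{}^q\Bscr_{X,p}^\Lscr={}^q\!{\mathrm J}_X\Lscr\otimes_X({}^q\!{\mathrm J}_X\Lscr)^{\otimes_X p},\qquad {}^q\!{\mathrm J}_X\Lscr\otimes_X N=({\mathrm J}_X\Lscr\otimes_X N)/(\mathrm J^c_X\Lscr)^q({\mathrm J}_X\Lscr\otimes_X N),
\]
and using the standard identification $\uHom_{{\mathrm J}_X\Lscr}({\mathrm J}_X\Lscr\otimes_X N,\Mscr)=\uHom_{\Oscr_X}(N,\Mscr)$, one obtains
\[
\uHom_{{\mathrm J}_X\Lscr}({}^q\Bscr_{X,p}^\Lscr,\Mscr)=\uHom_{\Oscr_X}\bigl(({}^q\!{\mathrm J}_X\Lscr)^{\otimes_X p},\Mscr[q]\bigr),
\]
where $\Mscr[q]=\{m\in\Mscr\mid(\mathrm J^c_X\Lscr)^q m=0\}=\uHom_{{\mathrm J}_X\Lscr}({}^q\!{\mathrm J}_X\Lscr,\Mscr)$.

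\textbf{Passing to the colimit.} Finally I would pass to $\dirlim_q$ and show that the inclusion $\Mscr[q]\hookrightarrow\Mscr$ induces an isomorphism
\[
\dirlim_q\uHom_{\Oscr_X}\bigl(({}^q\!{\mathrm J}_X\Lscr)^{\otimes_X p},\Mscr[q]\bigr)\xrightarrow{\sim}\dirlim_q\uHom_{\Oscr_X}\bigl(({}^q\!{\mathrm J}_X\Lscr)^{\otimes_X p},\Mscr\bigr).
\]
Injectivity is clear. For surjectivity, the local trivialization \eqref{new5} shows that ${}^q\!{\mathrm J}_X\Lscr$ is locally free of finite rank over $\Oscr_X$, so any local section of the right-hand side is determined by finitely many sections of $\Mscr$; since $\Mscr\in\Dis({\mathrm J}_X\Lscr)$ these are (locally) annihilated by $(\mathrm J^c_X\Lscr)^r$ for some common $r\ge q$. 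Precomposing with the canonical surjection $({}^r\!{\mathrm J}_X\Lscr)^{\otimes_X p}\twoheadrightarrow({}^q\!{\mathrm J}_X\Lscr)^{\otimes_X p}$ produces the same class in the colimit but now factoring through $\Mscr[r]$.

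The genuine technical point is the last step, namely the interchange of $\dirlim_q$ with $\Mscr\leftrightarrow\dirlim_q\Mscr[q]$; the other manipulations are formal consequences of the tensor--hom adjunction and the definitions.
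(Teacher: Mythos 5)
Your argument is correct and follows essentially the same route as the paper for the first equality, while supplying details for the second equality (the tensor--hom adjunction landing in $\Mscr[q]=\uHom_{{\mathrm J}_X\Lscr}({}^q\!{\mathrm J}_X\Lscr,\Mscr)$, and the interchange of $\Mscr[q]$ with $\Mscr$ in the colimit) that the paper passes over in silence. One caveat: the step you dismiss with ``by construction'' --- that the kernels $K_q$ form a fundamental system of open neighbourhoods of $0$ for the natural topology on $({\mathrm J}_X\Lscr)^{\widehat{\otimes}_X p+1}$ --- is exactly where the paper's proof does its only real work, namely observing that the filtration by the $K_q$ is cofinal with the total-degree filtration $F_n\Bscr_{X,p}^\Lscr=\sum_{\sum_i n_i=n}(\mathrm J^c_X\Lscr)^{n_1}\ctimes_X\cdots\ctimes_X(\mathrm J^c_X\Lscr)^{n_{p+1}}$ that actually defines the topology on the completed tensor product; the check is easy, but it should be stated rather than assumed.
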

\begin{proof}
With the notations as in \S\ref{ref-3.3-5} we have
\[
\uHom^{\text{cont}}_{{\mathrm J}_X\Lscr}(\Bscr_{X,p}^\Lscr,\Mscr)=
\injlim_n\uHom^{\text{cont}}_{{\mathrm J}_X\Lscr}(\Bscr_{X,p}^\Lscr/F_n\Bscr_{X,p}^\Lscr
,\Mscr)
\]
 where
\[
F_n\Bscr_{X,p}^\Lscr =\sum_{\sum_i n_i=n}
(\mathrm J^c_X\Lscr)^{n_1}\ctimes_X\cdots \ctimes_X
(\mathrm J^c_X\Lscr)^{n_{p+1}}
\]
On the other hand we have
\[
\dirlim_q
\uHom_{{\mathrm J}_X\Lscr}({}^q\Bscr_{X,p}^\Lscr,\Mscr)=
\dirlim_q
\uHom_{{\mathrm J}_X\Lscr}(\Bscr_{X,p}^\Lscr/G_q\Bscr_{X,p}^\Lscr,\Mscr)
\]
with
\[
G_q\Bscr_{X,p}^\Lscr=
\sum_i ({\mathrm J}_X\Lscr)^{\ctimes_X i}\ctimes (\mathrm J^c_X\Lscr)^{q}\ctimes_X
 ({\mathrm J}_X\Lscr)^{\ctimes_X {p-i}}
\]
It now suffices to note that the filtrations
$(F_n\Bscr_{X,p}^\Lscr)_n$ and $(G_q\Bscr_{X,p}^\Lscr)_q$
are cofinal inside $\Bscr_{X,p}^\Lscr$.
\end{proof}
\begin{proposition}
\label{ref-8.3-32}
In $D(\Mod(\Oscr_X))$ we have
\begin{align}
\Oscr_X\Lotimes_{{\mathrm J}_X\Lscr}\Oscr_X&=\Oscr_X\otimes_{{\mathrm
J}_X\Lscr}\Bscr_{X,\bullet}^\Lscr
\label{ref-8.2-33}
\end{align}
Furthermore for $\Mscr\in \Dis({\mathrm J}_X\Lscr)$ the composition
\begin{multline*}
\uHom^{\mathrm{cont}}_{{\mathrm J}_X\Lscr}(\Bscr_{X,\bullet}^\Lscr,\Mscr)
\xrightarrow{\sigma} \uHom_{{\mathrm J}_X\Lscr}(\Bscr_{X,\bullet}^\Lscr,\Mscr)\\
\xrightarrow{\tau} \uRHom_{{\mathrm
J}_X\Lscr}(\Bscr_{X,\bullet}^\Lscr,\Mscr)\overset{\mu}{\cong}
\uRHom_{{\mathrm J}_X\Lscr}(\Oscr_X,\Mscr)
\end{multline*}
(whith $\sigma$, $\tau$, $\mu$ the obvious natural maps) yields an isomorphism
\begin{align}
\uRHom_{{\mathrm
J}_X\Lscr}(\Oscr_X,\Mscr)&\overset{(\mu\tau\sigma)^{-1}}\cong\uHom^{\mathrm{cont}}_{{\mathrm
J}_X\Lscr}(\Bscr_{X,\bullet}^\Lscr,\Mscr)\label{ref-8.3-34}
\end{align}
\end{proposition}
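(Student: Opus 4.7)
The proof splits into \eqref{ref-8.2-33} and \eqref{ref-8.3-34}, attacked by different strategies.

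For \eqref{ref-8.2-33} the plan is to exhibit $\Bscr^\Lscr_{X,\bullet}\to\Oscr_X$ as a $\mathrm J_X\Lscr$-flat resolution, so that the derived tensor product coincides with the ordinary one. The resolution property is Proposition \ref{ref-8.1-30}. Flatness of each $\Bscr^\Lscr_{X,p}=(\mathrm J_X\Lscr)^{\widehat\otimes_X p+1}$ is checked locally: using \eqref{new5} it becomes $\Oscr_X[[x_1^{(0)},\ldots,x_d^{(p)}]]$, manifestly flat over the first factor $\mathrm J_X\Lscr=\Oscr_X[[x_1^{(0)},\ldots,x_d^{(0)}]]$. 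Tensoring this resolution with $\Oscr_X$ then computes $\Oscr_X\Lotimes_{\mathrm J_X\Lscr}\Oscr_X$.

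For \eqref{ref-8.3-34} my plan is to invoke universality of derived functors in the Grothendieck category $\Dis(\mathrm J_X\Lscr)$ (cf.\ Lemma \ref{new1}). By Lemma \ref{ref-7.1-25} it suffices to identify $\uHom^{\mathrm{cont}}(\Bscr^\Lscr_{X,\bullet},\Mscr)$ with $\uRHom_{\Dis}(\Oscr_X,\Mscr)$ for $\Mscr\in\Dis$. I would verify three conditions: \emph{(i)} in degree $0$ the augmentation $\epsilon$ yields $H^0\uHom^{\mathrm{cont}}(\Bscr^\Lscr_{X,\bullet},\Mscr)=\uHom_{\Dis}(\Oscr_X,\Mscr)=\Mscr$; \emph{(ii)} $\uHom^{\mathrm{cont}}(\Bscr^\Lscr_{X,\bullet},-)$ has the $\delta$-functor property on $\Dis$; \emph{(iii)} its positive cohomology vanishes on injectives of $\Dis$.

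For \emph{(ii)}, using $\uHom^{\mathrm{cont}}(\Bscr^\Lscr_{X,p},-)=\dirlim_q\uHom({}^q\Bscr^\Lscr_{X,p},-)$ (previous lemma) and exactness of filtered colimits, I reduce to lifting sections. Given a surjection $\Mscr\twoheadrightarrow\Mscr''$ in $\Dis$ and $f:{}^q\Bscr^\Lscr_{X,p}\to\Mscr''$, local freeness of ${}^q\Bscr^\Lscr_{X,p}$ over $\Oscr_X$ (from \eqref{new5}) gives an $\Oscr_X$-linear local lift of $f$ to $\Mscr$; the $\Dis$-hypothesis on $\Mscr$ forces the image of this lift to be annihilated by some $(\mathrm J^c_X\Lscr)^{q'}$ with $q'\ge q$, so the lift is $\mathrm J_X\Lscr$-linear when viewed as a section of $\uHom({}^{q'}\Bscr^\Lscr_{X,p},\Mscr)\subset\uHom^{\mathrm{cont}}(\Bscr^\Lscr_{X,p},\Mscr)$. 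For \emph{(iii)}, for $I$ injective in $\Dis$ the functor $\uHom_{\Dis}(-,I)$ is exact; applied to the resolution ${}^q\Bscr^\Lscr_{X,\bullet}\to\Oscr_X$ in $\Dis$ (same contracting homotopy as in Proposition \ref{ref-8.1-30}) this yields a complex concentrated in degree zero, a vanishing preserved by $\dirlim_q$. Combined with (i), (ii), (iii) the universality of derived $\delta$-functors then promotes the canonical map $\mu\tau\sigma$ to the asserted isomorphism.

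The main obstacle is the surjectivity argument in step \emph{(ii)}: no individual truncation ${}^q\Bscr^\Lscr_{X,p}$ is projective in $\Dis$, so that the local lift one can write down does not stay inside $\uHom({}^q\Bscr^\Lscr_{X,p},\Mscr)$ in general. The key insight is that the mismatch is exactly compensated by the $\mathrm J^c$-torsion filtration of $\Mscr\in\Dis$, and the passage from $q$ to a suitable $q'\ge q$ in the filtered colimit restores a projectivity-like property for the full system $\{{}^q\Bscr^\Lscr_{X,p}\}_q$, which is precisely what allows $\uHom^{\mathrm{cont}}(\Bscr^\Lscr_{X,\bullet},-)$ to compute $\uRHom$.
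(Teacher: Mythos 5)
Your overall strategy is workable and uses essentially the same ingredients as the paper (the truncated resolutions ${}^q\Bscr^\Lscr_{X,\bullet}$, the colimit description \eqref{ref-8.1-31}, Lemma \ref{ref-7.1-25}, the local model \eqref{new5}), but one step needs repair and the other half is organized differently from the paper.

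For \eqref{ref-8.2-33}, the claim that $\Bscr^\Lscr_{X,p}\cong\Oscr_X[[x^{(1)}_1,\ldots,x^{(p+1)}_d]]$ is \emph{manifestly flat} over ${\mathrm J}_X\Lscr\cong\Oscr_X[[x^{(1)}]]$ is an overclaim: the paper works over an arbitrary ringed site, $R[[y_1,\ldots,y_m]]$ is isomorphic to $R^{\NN}$ as an $R$-module, and such products are flat only for coherent $R$; so flatness can fail. Fortunately you do not need flatness: to compute $\Oscr_X\Lotimes_{{\mathrm J}_X\Lscr}\Oscr_X$ from the resolution of Proposition \ref{ref-8.1-30} it suffices that $\Tor^{{\mathrm J}_X\Lscr}_i(\Oscr_X,\Bscr^\Lscr_{X,p})=0$ for $i>0$, and this is exactly what the paper establishes: $\Oscr_X={\mathrm J}_X\Lscr/(x_1,\ldots,x_d)$ and $(x_1,\ldots,x_d)$ is a regular sequence on $\Bscr^\Lscr_{X,p}$, so the Koszul complex computes the vanishing. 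With that substitution your argument for \eqref{ref-8.2-33} agrees with the paper's.

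For \eqref{ref-8.3-34} you run a universal-$\delta$-functor argument where the paper instead takes an injective resolution $E^\bullet$ of $\Mscr$ in $\Dis({\mathrm J}_X\Lscr)$ and compares the row and column cohomology of the double complex $\uHom^{\mathrm{cont}}_{{\mathrm J}_X\Lscr}(\Bscr^\Lscr_{X,\bullet},E^\bullet)$; the two routes are equivalent in substance and both ultimately reduce to the same two facts. Your step \emph{(ii)}, however, is more laborious than necessary: by the second line of \eqref{ref-8.1-31} the functor $\uHom^{\mathrm{cont}}_{{\mathrm J}_X\Lscr}(\Bscr^\Lscr_{X,p},-)$ is a filtered colimit of the functors $\uHom_{\Oscr_X}(({}^q{\mathrm J}_X\Lscr)^{\otimes_X p},-)$, each of which is exact because $({}^q{\mathrm J}_X\Lscr)^{\otimes_X p}$ is locally free of finite rank over $\Oscr_X$, and filtered colimits of sheaves are exact; this disposes of the ``main obstacle'' without any lifting argument. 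Two points you should still make explicit: first, that restriction preserves injectives of $\Dis({\mathrm J}_X\Lscr)$ (so that sheaf-$\uHom$ arguments with injectives are legitimate; the paper notes this follows from exactness of $j_!$); second, that universality only produces \emph{some} isomorphism of $\delta$-functors, so to conclude that the specific composite $\mu\tau\sigma$ is invertible you must check it is a morphism of $\delta$-functors restricting to the identity in degree $0$ --- the paper's commutative diagram achieves this identification in one stroke.
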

\begin{proof}
  We first discuss \eqref{ref-8.3-34} (see also \cite[Thm 0.3]{ye2}).
  Let $E^\bullet$ be an injective resolution of $\Mscr$ in
  $\Dis({\mathrm J}_X\Lscr)$.  According to Lemma \ref{ref-7.1-25} we
  know that injectives in $\Dis(L_X\Lscr)$ are acyclic for
  $\uHom_{{\mathrm J}_X \Lscr}(\Oscr_X,-)$. Hence
   $\uRHom_{{\mathrm J}_X \Lscr}(\Oscr_X,\Mscr)\cong\uHom_{{\mathrm J}_X
\Lscr}(\Oscr_X,E^\bullet)$.

   Furthermore from the second line of \eqref{ref-8.1-31}, taking into
   account that $({}^q\! {\mathrm J}_X\Lscr)^{\otimes_{X} p}$ is
   locally free over $\Oscr_X$ and that direct limits are exact it
   follows that the cohomology for the columns of the double complex
   $\uHom^{\mathrm{cont}}_{{\mathrm
       J}_X\Lscr}(\Bscr_{X,\bullet}^\Lscr,E^\bullet)$ is equal to
   $\uHom^{\mathrm{cont}}_{{\mathrm
       J}_X\Lscr}(\Bscr_{X,\bullet}^\Lscr,\Mscr)$.  Thus
   $\uHom^{\mathrm{cont}}_{{\mathrm J}_X\Lscr}(\Bscr_{X,\bullet}^\Lscr,\Mscr)\cong\uHom^{\mathrm{cont}}_{{\mathrm
       J}_X\Lscr}(\Bscr_{X,\bullet}^\Lscr,E^\bullet)$ as objects in
   $D(\Mod(\Oscr_X))$.

We claim that the cohomology for the rows of $\uHom^{\mathrm{cont}}_{{\mathrm
       J}_X\Lscr}(\Bscr_{X,\bullet}^\Lscr,E^\bullet)$   is equal to $\uHom_{{\mathrm
J}_X\Lscr}(\Oscr_X, E^\bullet)$. Let $E$ be a single injective in $\Dis({\mathrm
J}_X\Lscr)$.
Standard manipulations with adjoint functors establish that
\[
{}^q\!E=\uHom_{{\mathrm J}_X\Lscr}({\mathrm J}_X\Lscr/({\mathrm J}_X\Lscr)^q,E)
\]
is injective in $\Mod({}^q\!{\mathrm J}_X\Lscr)$. Using the fact that
${}^q\Bscr_{X,p}^\Lscr$ is a resolution of $\Oscr_X$ (as noted above)
we compute
\begin{align*}
H^n(\uHom^{\mathrm{cont}}_{{\mathrm J}_X\Lscr}(\Bscr_{X,p}^\Lscr,{}^q\!E))&=
\dirlim_q H^n(\uHom_{{}^q\!{\mathrm J}_X\Lscr}({}^q\Bscr_{X,p}^\Lscr,{}^q\!E))
\\
&=\dirlim_q \uHom_{{}^q\!{\mathrm J}_X\Lscr}(H_n({}^q\Bscr_{X,p}^\Lscr),{}^q\!E)\\
&=
\begin{cases}
0&\text{$n>0$}\\
\dirlim_q \uHom_{{}^q\!{\mathrm J}_X\Lscr}(\Oscr_X,{}^q\!E)
&\text{$n=0$}
\end{cases}\\
&=
\begin{cases}
0&\text{$n>0$}\\
\uHom_{{\mathrm J}_X\Lscr}(\Oscr_X,E)
&\text{$n=0$}
\end{cases}
\end{align*}
Thus as objects in $D(\Mod(\Oscr_X))$ we have  $\uHom_{{\mathrm J}_X\Lscr}(\Oscr_X,
E^\bullet)\cong\uHom^{\mathrm{cont}}_{{\mathrm
       J}_X\Lscr}(\Bscr_{X,\bullet}^\Lscr,E^\bullet)$.
  We now obtain a commutative diagram
\[
\xymatrix{
\uHom_{{\mathrm J}_X\Lscr}(\Oscr_X,E^\bullet) \ar[r]^{\cong}\ar[d]_{\cong} &
\uHom_{{\mathrm J}_X\Lscr}^{\text{cont}}(\Bscr_{X,\bullet}^\Lscr,E^\bullet) \ar[r]^\sigma
&
\uHom_{{\mathrm J}_X\Lscr}(\Bscr_{X,\bullet}^\Lscr,E^\bullet) \ar[d]^\tau \\
\uRHom_{{\mathrm J}_X\Lscr}(\Oscr_X,E^\bullet)\ar[rr]_{\mu^{-1}}^\cong&& \uRHom_{{\mathrm
J}_X\Lscr}(\Bscr_{X,\bullet}^\Lscr,E^\bullet)
}
\]
where the ``$\cong$'' denote quasi-isomorphisms. It follows that
$\mu\tau\sigma$ is indeed an isomorphism in $D(\Mod(\Oscr_X))$.

\medskip

Now we discuss \eqref{ref-8.2-33}.
It is easy to see that we have to show that
\begin{equation}
\label{ref-8.4-35}
H^n(\Oscr_X\Lotimes_{{\mathrm J}_X\Lscr}\Bscr_{X,p}^\Lscr)=0\qquad\text{for $n>0$}
\end{equation}
We may check this locally. I.e.\ we may assume
${\mathrm J}_X\Lscr=\Oscr_X[[x_1,\ldots,x_d]]$ and hence
\begin{equation}
\label{ref-8.5-36}
\Bscr_{X,p}^{\Lscr}=\Oscr_X[[x_1^{(1)},\ldots,x_d^{(p+1)}]]
\end{equation}
with ${\mathrm J}_X\Lscr$ acting through the variables $x_1^{(1)},\ldots,x_d^{(1)}$.

Then $\Oscr_X={\mathrm J}_X\Lscr/(x_1,\ldots,x_d)$ and
\eqref{ref-8.5-36} shows that  $(x_1,\ldots,x_d)$ forms
a regular sequence on $\Bscr_{X,p}^{\Lscr}$. The required vanishing
in \eqref{ref-8.4-35} now follows in the usual way.
\end{proof}
\section{Discussion of the cup product.}
We will consider $D(\Mod(\Oscr_X))$ as a symmetric monoidal category through
the derived tensor product over $\Oscr_X$.
\begin{proposition}
\label{ref-9.1-37}
There is a canonical isomorphism of algebra objects  in $D(\Mod(\Oscr_X))$
\begin{equation}
\label{ref-9.1-38}
\Phi:(\mathrm{HC}^{\bullet}_{\Lscr,X})^{\text{opp}}\overset{\cong}{\longrightarrow}
\uRHom_{{\mathrm J}_X\Lscr}(\Oscr_X,\Oscr_X)
\end{equation}
which sends the opposite of the cup product to the Yoneda product.
\end{proposition}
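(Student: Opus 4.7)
The plan is to construct $\Phi$ at the cochain level using the bar resolution of Section~8 and then verify compatibility with differentials and with products separately. By Proposition~\ref{ref-8.3-32} applied to $\Mscr=\Oscr_X$, we have
\[
\uRHom_{{\mathrm J}_X\Lscr}(\Oscr_X,\Oscr_X) \;\cong\; \uHom^{\mathrm{cont}}_{{\mathrm J}_X\Lscr}(\Bscr^{\Lscr}_{X,\bullet},\Oscr_X)
\]
in $D(\Mod(\Oscr_X))$. Since $\Bscr^{\Lscr}_{X,p}=({\mathrm J}_X\Lscr)^{\ctimes_X(p+1)}$ is freely generated as a complete ${\mathrm J}_X\Lscr$-module by the last $p$ tensor factors, and since $\Oscr_X$ receives its ${\mathrm J}_X\Lscr$-action via $\epsilon$, continuous ${\mathrm J}_X\Lscr$-linearity reduces this in degree $p$ to continuous $\Oscr_X$-linear maps $({\mathrm J}_X\Lscr)^{\ctimes_X p}\to\Oscr_X$. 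The iterated non-degenerate pairing recorded just after Lemma~\ref{ref-3.1-10} identifies the latter with $(\mathrm U_X\Lscr)^{\otimes_X p}=\mathrm{HC}^p_{\Lscr,X}$. Concretely, $\Phi(D_1\otimes\cdots\otimes D_p)$ corresponds to the cochain
\[
\alpha_0\otimes\cdots\otimes\alpha_p \;\longmapsto\; \epsilon(\alpha_0)\,\langle\alpha_1,D_1\rangle\cdots\langle\alpha_p,D_p\rangle.
\]

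Second, I would check that $\Phi$ intertwines the Hochschild differential $\mathrm d_H$ with the transpose of the bar differential $\mathrm b'_H$. Term by term, the duality between the counit $\epsilon$ of ${\mathrm J}_X\Lscr$ and the unit $1\in\mathrm U_X\Lscr$, together with the duality between the product on ${\mathrm J}_X\Lscr$ and the coproduct $\Delta$ on $\mathrm U_X\Lscr$ (formula~\eqref{ref-3.6-8}), matches the leading, interior, and trailing factors of $\mathrm b'_H$ with the $1\otimes D$, $\Delta_i(D)$, and $D\otimes 1$ terms of $\mathrm d_H$; the alternating signs in both differentials line up. This is a direct local verification.

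Finally, for product compatibility, I would compute the Yoneda product via the bar resolution. A natural ${\mathrm J}_X\Lscr$-linear degree-$(-p)$ chain lift of $\Phi(D)\colon\Bscr^{\Lscr}_{X,p}\to\Oscr_X$ to $\widetilde{\Phi(D)}\colon\Bscr^{\Lscr}_{X,p+\bullet}\to\Bscr^{\Lscr}_{X,\bullet}$ is
\[
\alpha_0\otimes\cdots\otimes\alpha_{p+q} \;\longmapsto\; \alpha_0\,\langle\alpha_1,D_1\rangle\cdots\langle\alpha_p,D_p\rangle\otimes\alpha_{p+1}\otimes\cdots\otimes\alpha_{p+q}.
\]
Composing with a second cocycle $\Phi(E)\colon\Bscr^{\Lscr}_{X,q}\to\Oscr_X$ and using the multiplicativity of the pairing produces the cochain $\Phi(D_1\otimes\cdots\otimes D_p\otimes E_1\otimes\cdots\otimes E_q)$. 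Under the standard Yoneda convention, in which one lifts the right factor and post-composes with the left, this composite represents $\Phi(E)\cdot\Phi(D)$; hence concatenation in $T_X(\mathrm U_X\Lscr)$, i.e.\ the cup product, corresponds to the Yoneda product with the two factors swapped, which is exactly the content of the $\mathrm{opp}$ in the statement. The main obstacle will be verifying that the candidate lift above is genuinely a chain map (which is where the duality between multiplication in ${\mathrm J}_X\Lscr$ and the coproduct $\Delta$ is essential, the ``coalgebra'' input turning interior merges on the source into $\Delta_i$-summands on the target) and tracking the Koszul signs $(-1)^{pq}$ — these are precisely the signs built into the cup product formula and the standard Yoneda sign convention.
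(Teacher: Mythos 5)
Your first two steps coincide with the paper's: the identification $\uRHom_{{\mathrm J}_X\Lscr}(\Oscr_X,\Oscr_X)\cong\uHom^{\mathrm{cont}}_{{\mathrm J}_X\Lscr}(\Bscr^{\Lscr}_{X,\bullet},\Oscr_X)$ from Proposition \ref{ref-8.3-32}, the reduction to continuous $\Oscr_X$-linear maps $({\mathrm J}_X\Lscr)^{\ctimes_X p}\to\Oscr_X$, and the use of the non-degenerate pairing are exactly the curved arrow and the map \eqref{ref-9.3-40} in the paper's diagram, and the cochain-level formula $\epsilon(\alpha_0)\langle\alpha_1,D_1\rangle\cdots\langle\alpha_p,D_p\rangle$ agrees with the paper's. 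The gap is in the product step: your candidate lift $\alpha_0\otimes\cdots\otimes\alpha_{p+q}\mapsto\alpha_0\langle\alpha_1,D_1\rangle\cdots\langle\alpha_p,D_p\rangle\otimes\alpha_{p+1}\otimes\cdots\otimes\alpha_{p+q}$ is \emph{not} a chain map, and no sign convention repairs it. Already for $p=1$ one finds
\[
\widetilde{\Phi(D_1)}\bigl(\mathrm b'_H(\alpha_0\otimes\alpha_1\otimes\alpha_2)\bigr)
=\alpha_0\alpha_1\langle\alpha_2,D_1\rangle-\alpha_0\langle\alpha_1\alpha_2,D_1\rangle+\alpha_0\langle\alpha_1,D_1\rangle\epsilon(\alpha_2),
\]
whereas $\mathrm b'_H\bigl(\alpha_0\langle\alpha_1,D_1\rangle\otimes\alpha_2\bigr)=\alpha_0\langle\alpha_1,D_1\rangle\alpha_2-\alpha_0\langle\alpha_1,D_1\rangle\epsilon(\alpha_2)$; taking $D_1=1$, $\alpha_0=\alpha_2=1$ and $\alpha_1\in\mathrm J^c_X\Lscr$ the two sides are $\alpha_1$ and $0$. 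The structural reason is that the outer terms of $\mathrm b'_H$ on the target (where a neighbouring factor is merged into position $0$, resp.\ killed by $\epsilon$) must be matched by the $1\otimes D$ and $D\otimes 1$ terms of $\mathrm d_H$, and this forces the operator attached to $1\in\mathrm U_X\Lscr$ to act as the \emph{identity} of ${\mathrm J}_X\Lscr$; but $\langle-,1\rangle=\epsilon$ collapses a jet to its value in $\Oscr_X$.

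The repair is precisely the paper's device: replace the scalar $\langle\alpha_i,D_i\rangle=\epsilon({}^2\nabla_{D_i}\alpha_i)$ by the jet ${}^2\nabla_{D_i}\alpha_i\in{\mathrm J}_X\Lscr$, i.e.\ use the action \eqref{ref-9.2-39}. Then ${}^2\nabla_1=\Id$ handles the outer terms, and the identity ${}^2\nabla_D(\alpha\beta)=\sum_D{}^2\nabla_{D_{(1)}}(\alpha)\,{}^2\nabla_{D_{(2)}}(\beta)$ handles the interior merges, yielding the DG-module identity $\mathrm b'_H(D\cap\alpha)=\mathrm d_H(D)\cap\alpha+(-1)^{|D|}D\cap\mathrm b'_H(\alpha)$ and hence both the compatibility with differentials and the (opposite) multiplicativity in one stroke. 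Your lift agrees with the correct one only after post-composing with $\epsilon$, which is why your induced cocycle, and the final formula for the product, are nevertheless correct; but as written the Yoneda-product computation rests on a lift that does not exist.
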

\begin{proof} We have
\[
\uRHom_{{\mathrm J}_X\Lscr}(\Oscr_X,\Oscr_X)=\uRHom_{{\mathrm
J}_X\Lscr}(\Bscr_{X,\Lscr}^\bullet,\Bscr_{X,\Lscr}^\bullet)
\]
Thus $\Phi$ is an element of
\[
\Hom_{\Oscr_X}\biggl((\mathrm{HC}^{\bullet}_{\Lscr,X})^{\text{opp}},\uRHom_{{\mathrm
J}_X\Lscr}(\Bscr^\bullet_{X,\Lscr},\Bscr^\bullet_{X,\Lscr})\biggr)
\]
or using the $\Hom$-tensor relations (see \S\ref{new7}), a map in
$D(\Mod(\mathrm{J}_X\Lscr))$
\[
(\mathrm{HC}^{\bullet}_{\Lscr,X})^{\text{opp}}\otimes_{\Oscr_X} \Bscr^\bullet_{X,\Lscr}\r
\Bscr^\bullet_{X,\Lscr}
\]
Note that the tensor product is not derived since both factors are $\Oscr_X$-flat.

Thus to define a morphism like in \eqref{ref-9.1-38} it suffices to
define a $J_X\Lscr$-linear action of $\Bscr_{X,\Lscr}^\bullet$ on
$\left(\mathrm{HC}^{\bullet}_{\Lscr,X}\right)^{\text{opp}}$. One easily verifies that
if the action makes $\Bscr_{X,\Lscr}^\bullet$ into a
DG-module over $\left(\mathrm{HC}^{\bullet}_{\Lscr,X}\right)^{\text{opp}}$
then
$\Phi$ is an algebra morphism.

For a section $D=D_1\otimes\cdots \otimes D_p$ of $\mathrm{HC}^p_{\Lscr,X}$ and
a section $\alpha=\alpha_0\otimes\cdots\otimes\alpha_q$ of $\Bscr_{X,q}^\Lscr$ we put
\begin{equation}
\label{ref-9.2-39}
D\cap \alpha
=
\begin{cases}
\alpha_0{}^2\nabla_{D_1}\alpha_1\cdots {}^2\nabla_{D_p}\alpha_p\otimes
\alpha_{p+1}\otimes\cdots \otimes \alpha_q& \text{if $q\ge p$}\\
0&\text{otherwise}
\end{cases}
\end{equation}
and for $f$ a section of $\mathrm{HC}0_{\Lscr,X}=\Oscr_X$ we put
\[
f\cap (\alpha_0\otimes\cdots\otimes\alpha_q)=f\alpha_0\otimes\cdots\otimes\alpha_q
\]
This action is obviously $\mathrm{J}_X\Lscr$-linear and furthermore it is an easy
verification that
\[
\mathrm{b}'_H(D\cap \alpha)=\mathrm{d}_H(D)\cap \alpha+(-1)^{|D|}
D\cap \mathrm{b}'_H(\alpha)
\]
Hence we have indeed defined a morphism as in \eqref{ref-9.1-38}.
The fact that it sends the opposite of the cup product to the Yoneda
product follows from the easily verified
identity:
\[
(D\cup E)\cap \alpha=(-1)^{|D||E|}E\cap(D\cap \alpha)
\]
It is easy to see that the composition
\begin{equation}
\label{ref-9.3-40}
\mathrm{HC}_{\Lscr,X}^\bullet\r
\uHom_{{\mathrm J}_X\Lscr}(\Bscr_{X,\bullet}^\Lscr,\Bscr_{X,\bullet}^\Lscr)
\xrightarrow{\epsilon \circ-} \uHom_{{\mathrm J}_X\Lscr}(\Bscr_{X,\bullet}^\Lscr,\Oscr_X)
\end{equation}
is given by (we will pass silently over the special case $p=0$  as it is easy)
\[
D_1\otimes\cdots \otimes D_p\mapsto \left(\alpha_0\otimes\cdots\otimes
\alpha_q\mapsto
\begin{cases}
\langle\alpha_0,1\rangle\langle\alpha_1,D_1\rangle\cdots\langle\alpha_p,D_p\rangle&\text{if
$p=q$}\\
0&\text{otherwise}
\end{cases}
\right)
\]
From this formula it is easy to see that the image of
$\mathrm{HC}^p_{\Lscr,X}$ under \eqref{ref-9.3-40} lies in
\[
\uHom^{\text{cont}}_{{\mathrm
J}_X\Lscr}(\Bscr^{\Lscr}_{X,p},\Oscr_X)=\uHom_{\Oscr_X}^{\text{cont}}(
({\mathrm J}_X\Lscr)^{\ctimes p},\Oscr_X)
\]
and the resulting map
\[
\mathrm{HC}^p_{\Lscr,X}\r \uHom_{\Oscr_X}^{\text{cont}}(
({\mathrm J}_X\Lscr)^{\ctimes p},\Oscr_X)
\]
is given by
\[
D_1\otimes\cdots \otimes D_p\mapsto (\alpha_1\otimes\cdots\otimes
\alpha_p\mapsto
\langle\alpha_1,D_1\rangle\cdots\langle\alpha_p,D_p\rangle
)\]
which by the discussion in \S\ref{ref-3.3-5} is an isomorphism.

We may now construct the following commutative
diagram 
\begin{equation}
\label{new2}
\xymatrix{%
\mathrm{HC}_{\Lscr,X}\ar[r]^-{\cap}
\ar[d]_{\cong}& \uHom_{{\mathrm
J}_X\Lscr}(\Bscr_{X,\bullet}^\Lscr,\Bscr_{X,\bullet}^\Lscr)\ar[r]\ar[d] &\uRHom_{{\mathrm
J}_X\Lscr}(\Bscr_{X,\bullet}^{\Lscr},\Bscr_{X,\bullet}^{\Lscr})\ar[d]^\cong\\
\uHom^{\text{cont}}_{\mathrm{J}_X\Lscr}(\Bscr^{\Lscr}_{X,\bullet},\Oscr_X)\ar@(rd,dl)[rr]_{\cong}\ar[r]_\sigma&
 \uHom_{{\mathrm J}_X\Lscr}(\Bscr_{X,\bullet}^\Lscr,\Oscr_X)\ar[r]_\tau&
\uRHom_{{\mathrm J}_X\Lscr}(\Bscr^{\Lscr}_{X,\bullet},\Oscr_X)
}
\vspace*{.5cm}
\end{equation}
Here the left square is commutative by the fact that
\eqref{ref-9.3-40} has its image inside
$\uHom^{\text{cont}}(\Bscr^{\Lscr}_{X,\bullet},\Oscr_X)$ (as we have
discussed in the previous paragraph).   The right horizontal arrows are derived
from the obvious $\mathrm{J}_X\Lscr$-linear actions
\[
\uHom_{{\mathrm J}_X\Lscr}(\Bscr_{X,\bullet}^\Lscr,\Bscr_{X,\bullet}^\Lscr)
\otimes_{\Oscr_X} \Bscr_{X,\bullet}^{\Lscr}\r \Bscr_{X,\bullet}^{\Lscr}
\]
\[
\uHom_{{\mathrm J}_X\Lscr}(\Bscr_{X,\bullet}^\Lscr,\Oscr_X)
\otimes_{\Oscr_X} \Bscr_{X,\bullet}^{\Lscr}\r \Oscr_X
\]
(as explained in the beginning of the proof for the $\mathrm{HC}_{\Lscr,X}$-action)

\medskip

The curved arrow is an isomorphism by Proposition \ref{ref-8.3-32}.
It follows that \eqref{ref-9.1-38} is indeed an isomorphism.
\end{proof}
The following result was observed by the referee.
\begin{proposition} \label{new6} The map of DG-algebras
\[
(\mathrm{HC}^{\bullet}_{\Lscr,X})^{\text{opp}}\overset{\cong}{\longrightarrow}
\uHom_{{\mathrm
    J}_X\Lscr}^{\text{cont}}(\Bscr_{X,\bullet}^\mathcal L,\Bscr_{X,\bullet}^\mathcal L)
\]
obtained from the (continuous!) action of
$\mathrm{HC}^{\bullet}_{\Lscr,X}$ on $\Bscr_{X,\bullet}$ through the
capproduct (see \eqref{ref-9.2-39}), is a quasi-isomorphism.
\end{proposition}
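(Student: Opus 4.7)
The plan is to enhance diagram~\eqref{new2}: instead of only having $\uHom^{\text{cont}}_{{\mathrm J}_X\Lscr}(\Bscr_{X,\bullet}^\Lscr,\Oscr_X)$ in the bottom-left corner, I would insert $\uHom^{\text{cont}}_{{\mathrm J}_X\Lscr}(\Bscr_{X,\bullet}^\Lscr,\Bscr_{X,\bullet}^\Lscr)$ above it, joined to the latter by post-composition with the augmentation $\epsilon:\Bscr_{X,\bullet}^\Lscr\to\Oscr_X$. Before doing so, one must check that the cap-product action really is continuous for the $\mathrm J^c$-adic topology. This is a direct verification: for $D=D_1\otimes\cdots\otimes D_p$ with $D_i\in F^{n_i}\mathrm{U}_X\Lscr$, the operator ${}^2\nabla_{D_i}$ lowers the filtration index on ${\mathrm J}_X\Lscr$ by $n_i$, and since multiplication in ${\mathrm J}_X\Lscr$ is compatible with the adic filtration, $D\cap-$ decreases the bar filtration $F_\bullet\Bscr_{X,\bullet}^\Lscr$ by at most $\sum_i n_i$. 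Thus $\cap$ factors through $\uHom^{\text{cont}}_{{\mathrm J}_X\Lscr}(\Bscr_{X,\bullet}^\Lscr,\Bscr_{X,\bullet}^\Lscr)$, as advertised.

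Next I would exploit the resulting commutative triangle
\[
\xymatrix{
\mathrm{HC}^{\bullet}_{\Lscr,X}\ar[r]^-{\cap}\ar[rd]_{\cong}
&\uHom^{\text{cont}}_{{\mathrm J}_X\Lscr}(\Bscr_{X,\bullet}^\Lscr,\Bscr_{X,\bullet}^\Lscr)
\ar[d]^{\epsilon_*}\\
&\uHom^{\text{cont}}_{{\mathrm J}_X\Lscr}(\Bscr_{X,\bullet}^\Lscr,\Oscr_X)
}
\]
whose diagonal is the isomorphism of complexes provided by the left column of~\eqref{new2} (the composition $\epsilon_*\circ\cap$ is exactly the isomorphism from the discussion surrounding \eqref{ref-9.3-40}). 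The problem thus reduces to showing that $\epsilon_*$ is a quasi-isomorphism of complexes.

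For that, I would establish the stronger statement that for each fixed $p$ the augmentation $\Bscr_{X,\bullet}^\Lscr\to\Oscr_X$ induces a quasi-isomorphism
\[
\uHom^{\text{cont}}_{{\mathrm J}_X\Lscr}(\Bscr_{X,p}^\Lscr,\Bscr_{X,\bullet}^\Lscr)\;\longrightarrow\;
\uHom^{\text{cont}}_{{\mathrm J}_X\Lscr}(\Bscr_{X,p}^\Lscr,\Oscr_X);
\]
a standard totalization argument then upgrades this to the required quasi-isomorphism of totalized complexes. Working locally via~\eqref{new5} one has $\Bscr_{X,p}^\Lscr\cong{\mathrm J}_X\Lscr\ctimes_{\Oscr_X}\Oscr_X[[x^{(2)},\ldots,x^{(p+1)}]]$ with ${\mathrm J}_X\Lscr$ acting on the first factor, so $\uHom^{\text{cont}}_{{\mathrm J}_X\Lscr}(\Bscr_{X,p}^\Lscr,-)$ becomes a continuous coinduction from $\Oscr_X$-modules, which one expects to be exact on the class of ${\mathrm J}_X\Lscr$-modules arising here. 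Combined with the fact that $\Bscr_{X,\bullet}^\Lscr\to\Oscr_X$ is a resolution (Proposition~\ref{ref-8.1-30}), this exactness would produce the fibrewise quasi-isomorphism.

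The hard part is making this exactness claim rigorous: $\Bscr_{X,q}^\Lscr$ is only complete, not discrete, so Proposition~\ref{ref-8.3-32} does not apply off the shelf. I expect the obstruction can be handled by revisiting the double-complex computation at the end of the proof of Proposition~\ref{ref-8.3-32}: one replaces the single injective $E\in\Dis({\mathrm J}_X\Lscr)$ by the discrete truncations ${}^n\Bscr_{X,q}^\Lscr\in\Dis({\mathrm J}_X\Lscr)$ used in Section~8, applies the argument of that proof level-by-level, and then passes to the appropriate inverse limit in $n$.
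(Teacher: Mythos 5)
Your reduction is the same as the paper's: both pass through the left square of \eqref{new2} and reduce the statement to showing that post-composition with the augmentation $\Bscr^\Lscr_{X,\bullet}\to\Oscr_X$ induces a quasi-isomorphism $\Tot(\uHom^{\text{cont}}_{{\mathrm J}_X\Lscr}(\Bscr^\Lscr_{X,\bullet},\Bscr^\Lscr_{X,\bullet}))\to\uHom^{\text{cont}}_{{\mathrm J}_X\Lscr}(\Bscr^\Lscr_{X,\bullet},\Oscr_X)$, and both further reduce this to a row-wise statement for each fixed $\Bscr^\Lscr_{X,p}$. The gap is in how you establish that row-wise statement. You correctly sense that this is ``the hard part,'' but the fix you sketch --- replacing the target by discrete truncations ${}^n\Bscr^\Lscr_{X,q}$, running the argument of Proposition \ref{ref-8.3-32} level by level, and ``passing to the appropriate inverse limit in $n$'' --- does not work as stated: inverse limits (and infinite products) of sheaves on a site are not exact and do not commute with cohomology of complexes, so a level-wise quasi-isomorphism need not survive the limit. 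This is precisely the difficulty the paper flags at the outset (``the functor $\uHom^{\text{cont}}_{{\mathrm J}_X\Lscr}(\Bscr^{\Lscr}_{X,\bullet},-)$ is not exact''), and reintroducing an inverse limit of sheaves at the last step does not evade it. Note also that in Section~8 the truncations appear as \emph{sources} of $\uHom$, where the relevant limit is a filtered colimit (exact); here you would need them as \emph{targets}, where the limit is an inverse limit (not exact), so the Section~8 argument does not transpose.

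The paper's resolution is to change where the computation happens rather than which resolution is used: restrict to the subsite of objects $U$ with $\Lscr_U$ free and prove the quasi-isomorphism section-wise over each such $U$, i.e.\ at the presheaf level, then sheafify (sheafification is exact, so a section-wise quasi-isomorphism of complexes of presheaves sheafifies to a quasi-isomorphism of complexes of sheaves). Over a fixed $U$ two facts save the day: $\Gamma(U,-)$ commutes with the inverse limits defining the completed tensor products, so $\Bscr^\Lscr_{U,\bullet}(U)$ is literally the completed bar complex of the ring ${\mathrm J}_U\Lscr(U)\cong\Oscr_U(U)[[x_1,\ldots,x_d]]$ and is contractible by the explicit homotopy of Proposition \ref{ref-8.1-30}; and $\Bscr^\Lscr_{U,n}$ is topologically free, so $\Hom^{\text{cont}}_{{\mathrm J}_U\Lscr}(\Bscr^\Lscr_{U,n},\Mscr)=\Mscr(U)^I$ is an honest product of abelian groups, and products of abelian groups are exact. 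Your continuity check for the cap product and the commutative triangle are fine; if you replace the coinduction/inverse-limit step by this section-wise argument, your proof closes.
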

\begin{proof}
The proof is easy in principle but we have to be careful with taking
products of sheaves. In particular the functor $\uHom_{{\mathrm
J}_X\Lscr}^{\text{cont}}(\Bscr_{X,\bullet}^\mathcal L,-)$ is not exact.
This problem is solved by working on the presheaf level.

Looking at the leftmost square of \eqref{new2}
it is clearly sufficient to show that the map
\begin{equation}
\label{new3}
\Tot(\uHom_{{\mathrm
    J}_X\Lscr}^{\text{cont}}(\Bscr^\mathcal L_{X,\bullet},\Bscr^\mathcal L_{X,\bullet}))
\r
\uHom_{{\mathrm
    J}_X\Lscr}^{\text{cont}}(\Bscr^\mathcal L_{X,\bullet},\Oscr_X)
\end{equation}
obtained from the augmentation is a quasi-isomorphism.
In this framework, we regard $\uHom_{{\mathrm
    J}_X\Lscr}^{\text{cont}}(\Bscr^\mathcal L_{X,\bullet},\Bscr^\mathcal L_{X,\bullet})$
as
a double complex located in the second quadrant.
Thus the horizontal differential comes from the differential on the rightmost copy of
$\Bscr_{X,\bullet}$.

We first replace our site with a new one $X'$ containing only the
objects $U$ for which $\Lscr_U$ is free. Obviously $X$ and $X'$ have
the same sheaf theory.
Let $U$ be an object of $X'$. We will show that
\begin{multline}
\label{new4}
\Tot(\uHom_{{\mathrm
    J}_X\Lscr}^{\text{cont}}(\Bscr^\mathcal L_{X,\bullet},\Bscr^\mathcal
L_{X,\bullet}))(U)=\Tot(\Hom_{{\mathrm
    J}_U\Lscr}^{\text{cont}}(\Bscr^\mathcal L_{U,\bullet},\Bscr^\mathcal L_{U,\bullet}))\\
\r
\Hom_{{\mathrm
    J}_U\Lscr}^{\text{cont}}(\Bscr^\mathcal L_{U,\bullet},\Oscr_U)=\uHom_{{\mathrm
    J}_X\Lscr}^{\text{cont}}(\Bscr^\mathcal L_{X,\bullet},\Oscr_X)(U)
\end{multline}
is a quasi-isomorphism. Thus we obtain a presheaf version of the
required quasi-isomorphism. We finish by applying
sheafification.

Some diagram chasing reveals that to prove that \eqref{new4} is a quasi-isomorphism it is
sufficient to check that the rows of the double complex $\Hom_{{\mathrm
    J}_U\Lscr}^{\text{cont}}(\Bscr^\mathcal L_{U,\bullet},\Bscr^\mathcal L_{U,\bullet})$
have the correct
cohomology. I.e.\ for any $n$ we must check that the map
\[
\Hom_{{\mathrm
    J}_U\Lscr}^{\text{cont}}(\Bscr^\mathcal L_{U,n},\Bscr^\mathcal L_{U,\bullet})\r
\Hom_{{\mathrm
    J}_U\Lscr}^{\text{cont}}(\Bscr^\mathcal L_{U,n},\Oscr_{U})
\]
is a quasi-isomorphism.

Now the local form of ${\mathrm
    J}_U\Lscr$  (see \eqref{new5}) implies that $\Bscr^\mathcal L_{U,n}$ is topologically
free. Denote the indexing set for a basis by $I$. Then the functor
\[
\Hom_{{\mathrm
    J}_U\Lscr}^{\text{cont}}(\Bscr^\mathcal L_{U,n},-)
\]
sends a sheaf $\Mscr$ of complete linear topological $\mathrm{J}_U\Lscr$-modules to
$\Mscr(U)^I$. Hence
it remains to show that
\[
\Bscr^\mathcal L_{U,\bullet}(U)\r \Oscr_U(U)\r 0
\]
is acyclic (since then we may invoke exactness for products of abelian groups).

The fact that $\Gamma(U,-)$ commutes with inverse limits and hence with completions
implies that
\[
\Bscr^\mathcal L_{U,\bullet}(U)=\Bscr^\mathcal L_{\bullet}(\Oscr(U))
\]
where
\[
\Bscr^\mathcal L_{\bullet}(\Oscr(U))=\bigoplus_{p\geq 0} ({\mathrm
J}_U\Lscr(U))^{\widehat{\otimes}_{X} p+1}
\]
with the usual differential and
\[
{\mathrm J}_U\Lscr(U)\cong\Oscr_U(U)[[x_1,\ldots,x_d]]
\]
 To finish the proof one uses the same method as in the proof of Proposition
\ref{ref-8.1-30} to show that $\Bscr^\mathcal L_{U,\bullet}(U)$ is quasi-isomorphic to
$\Oscr_U(U)$.
\end{proof}
\section{Discussion of the cap product}
Now we prove the following result.
\begin{proposition}
\label{ref-10.1-41} There is a canonical isomorphism in $D(\Mod(\Oscr_X))$
\begin{equation}
\label{ref-10.1-42}
\Psi: \Oscr_X\Lotimes_{{\mathrm J}_X\Lscr}\Oscr_X \overset{\cong}{\longrightarrow}
\mathrm{HC}^{\Lscr}_{X,\bullet}
\end{equation}
which is compatible with $\Phi$ (see \eqref{ref-9.1-38}) in the following
sense: denote the action of $\uRHom_{{\mathrm J}_X\Lscr}(\Oscr_X,\Oscr_X)$ on the
second argument of $\Oscr_X\Lotimes_{{\mathrm J}_X\Lscr}\Oscr_X$ by ``$\cap$''; then we
have
\begin{equation}
\label{ref-10.2-43}
D\cap \Psi(u)=\Phi(D)\cap u
\end{equation}
for a section $D$ of $\mathrm{HC}^{\bullet}_{\Lscr,X}$ and $u$ of $
\Oscr_X\Lotimes_{{\mathrm J}_X\Lscr}\Oscr_X $.
\end{proposition}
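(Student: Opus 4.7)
\emph{Plan.} The plan is to construct $\Psi$ explicitly through the bar resolution of Proposition \ref{ref-8.1-30} and then to verify the compatibility \eqref{ref-10.2-43} by tracking how the cap action \eqref{ref-9.2-39} descends through the quotient by $\mathrm{J}^c_X\Lscr$. By Proposition \ref{ref-8.3-32} I may identify $\Oscr_X\Lotimes_{\mathrm{J}_X\Lscr}\Oscr_X$ with the honest tensor product $\Oscr_X\otimes_{\mathrm{J}_X\Lscr}\Bscr^{\Lscr}_{X,\bullet}$ in $D(\Mod(\Oscr_X))$. Since $\Oscr_X=\mathrm{J}_X\Lscr/\mathrm{J}^c_X\Lscr$ via $\epsilon$, and since $\mathrm{J}_X\Lscr$ acts on $\Bscr^{\Lscr}_{X,p}$ by left multiplication on the leftmost tensor factor, the underlying $\Oscr_X$-module of this quotient is identified with $(\mathrm{J}_X\Lscr)^{\widehat\otimes_X p}$ via $1\otimes(\alpha_0\otimes\cdots\otimes\alpha_p)\mapsto\epsilon(\alpha_0)\alpha_1\otimes\cdots\otimes\alpha_p$, which is precisely the restriction of \eqref{ref-6.2-22} to the quotient.

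Next I check by a direct term-by-term calculation that the induced differential on $(\mathrm{J}_X\Lscr)^{\widehat\otimes_X\bullet}$ agrees with the formula for $\mathrm{b}_H$ in Proposition \ref{ref-6.1-20}. The key point is that although $\Bscr^{\Lscr}_{X,\bullet}$ carries the bar differential $\mathrm{b}'_H$ rather than the Hochschild differential $\mathrm{b}_H$, the extra $(-1)^p\alpha_0\otimes\cdots\otimes\alpha_{p-1}\epsilon(\alpha_p)$ piece in $\mathrm{b}'_H$ becomes, after reducing the first factor via $\epsilon$, exactly the ``cyclic'' term $(-1)^p\alpha_1\otimes\cdots\otimes\alpha_{p-1}\epsilon(\alpha_p)$ required by $\mathrm{b}_H$; and the $i=0$ summand $\alpha_0\alpha_1\otimes\cdots$ becomes $\epsilon(\alpha_1)\alpha_2\otimes\cdots$, which is the remaining ``missing'' term. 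Thus the quotient complex is isomorphic to $\mathrm{HC}^{\Lscr}_{X,\bullet}$ and this defines $\Psi$.

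For the compatibility with $\Phi$, I use that in the proof of Proposition \ref{ref-9.1-37} the morphism $\Phi(D)$ is represented, as an endomorphism of the bar resolution, by the cap action $D\cap -$ of \eqref{ref-9.2-39}. Since this action is $\mathrm{J}_X\Lscr$-linear on $\alpha_0$, it descends to the quotient $\Oscr_X\otimes_{\mathrm{J}_X\Lscr}\Bscr^{\Lscr}_{X,\bullet}$. Applying $\epsilon$ to the first factor and using that $\epsilon({}^2\nabla_{D_i}\alpha_i)=({}^2\nabla_{D_i}\alpha_i)(1)=\alpha_i(D_i)$, the descended formula reads $\epsilon(\alpha_0)\alpha_1(D_1)\cdots\alpha_p(D_p)\alpha_{p+1}\otimes\cdots\otimes\alpha_q$, which matches the cap product \eqref{ref-6.4-24} on $\mathrm{HC}^{\Lscr}_{X,\bullet}$ applied to $\Psi(u)$. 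This yields \eqref{ref-10.2-43}. The work is almost entirely bookkeeping, and the only mild obstacle is keeping straight the two presentations of $\mathrm{HC}^{\Lscr}_{X,\bullet}$ (as ${}^G\nabla$-invariants in $\widehat{\mathrm{HC}}_{X,\bullet}({\mathrm J}_X\Lscr)$ versus as the bar quotient $\Oscr_X\otimes_{\mathrm{J}_X\Lscr}\Bscr^{\Lscr}_{X,\bullet}$), which is handled once one verifies that both carry the same differential.
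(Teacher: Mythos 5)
Your proposal is correct and follows the paper's own argument essentially verbatim: identify $\Oscr_X\Lotimes_{{\mathrm J}_X\Lscr}\Oscr_X$ with $\Oscr_X\otimes_{{\mathrm J}_X\Lscr}\Bscr^{\Lscr}_{X,\bullet}$ via Proposition \ref{ref-8.3-32}, define $\Psi$ by $f\otimes\alpha_0\otimes\cdots\otimes\alpha_p\mapsto f\epsilon(\alpha_0)\alpha_1\otimes\cdots\otimes\alpha_p$ using the presentation of Proposition \ref{ref-6.1-20}, and check compatibility of differentials and of the two cap product formulas \eqref{ref-9.2-39} and \eqref{ref-6.4-24} by direct computation. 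The paper leaves these verifications as "easy"; you have simply written them out, correctly.
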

\begin{proof}
By \eqref{ref-8.2-33} we have
\[
\Oscr_X\Lotimes_{{\mathrm J}_X\Lscr}\Oscr_X=\Oscr_X\otimes_{{\mathrm J}_X\Lscr}
\Bscr_{X,\bullet}^\Lscr
\]
We now define
\[
\Psi:\Oscr_X\otimes_{{\mathrm J}_X\Lscr}
\Bscr_{X,\bullet}^\Lscr\r \mathrm{HC}_{X,\bullet}^\Lscr
:f\otimes \alpha_0\otimes\cdots\otimes \alpha_p\mapsto
f\epsilon(\alpha_0)\alpha_1\otimes\cdots\otimes \alpha_p
\]
where we use the version of $\mathrm{HC}_{X,\bullet}^\Lscr$ given by Proposition
\ref{ref-6.1-20}.

It is easy to see that $\Psi$ commutes with differentials and is
an isomorphism of complexes. This gives
the required isomorphism in \eqref{ref-10.1-42}.

To verify \eqref{ref-10.2-43} we need to check that the following
diagram is commutative
\[
\xymatrix{
\Oscr_X\otimes_{{\mathrm J}_X\Lscr} \Bscr_{X,\bullet}^{\Lscr}\ar[d]_{1\otimes (D\cap-)}
\ar[r]^-{\Psi}
&\mathrm{HC}_{X,\bullet}^\Lscr\ar[d]^{D\cap-}\\
\Oscr_X\otimes_{{\mathrm J}_X\Lscr} \Bscr_{X,\bullet}^{\Lscr}
\ar[r]_-{\Psi}
&\mathrm{HC}_{X,\bullet}^\Lscr
}
\]
where the cap product formul\ae\  are \eqref{ref-9.2-39} and \eqref{ref-6.4-24}.
This is again a simple verification.
\end{proof}
\section{Main result}
The following is our main result.
\begin{theorem} There are isomorphisms
\label{ref-11.1-44}
\begin{align*}
\Phi:R^n\Gamma(X,\mathrm{HC}_{\Lscr,X}^\bullet)&\overset{\cong}{\longrightarrow}
\HH^n_{\Lscr}(X)\\
\Psi:\HH_n^{\Lscr}(X)&\overset{\cong}{\longrightarrow}
R^{-n}\Gamma(X,\mathrm{HC}_{X,\bullet}^\Lscr)
\end{align*}
such that $(\Phi,\Psi^{-1})$ defines an isomorphism
\[
\bigl(R^\bullet\Gamma(X,\mathrm{HC}_{\Lscr,X,}^\bullet),R^{-\bullet}\Gamma(X,\mathrm{HC}_{X,\bullet}^\Lscr)\bigr)
\cong \bigl(\HH^\bullet_{\Lscr}(X),\HH_\bullet^{\Lscr}(X)\bigr)
\]
compatible with the natural algebra and module structures.
\end{theorem}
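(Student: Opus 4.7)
The plan is to assemble the theorem from the structural isomorphisms already established in Propositions \ref{ref-9.1-37} and \ref{ref-10.1-41}, together with the identification of Hochschild homology in Proposition \ref{ref-4.4-19}. All the hard work lies in those earlier results; what remains is to globalize and to read off compatibilities.

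First, for the cohomology isomorphism $\Phi$, I would apply $R\Gamma(X,-)$ to the algebra quasi-isomorphism $\Phi : (\mathrm{HC}^{\bullet}_{\Lscr,X})^{\text{opp}} \cong \uRHom_{{\mathrm J}_X\Lscr}(\Oscr_X,\Oscr_X)$ of Proposition \ref{ref-9.1-37}. Since $R\Gamma$ is lax monoidal with respect to $\Lotimes_{\Oscr_X}$, the image is an isomorphism of algebra objects in $D(\mathrm{Ab})$. Using the standard identification $R\Gamma(X,\uRHom_{{\mathrm J}_X\Lscr}(-,-)) = \RHom_{{\mathrm J}_X\Lscr}(-,-)$ (precisely the content of Lemma \ref{ref-7.2-27}, combined with Lemma \ref{ref-7.1-25}), the right-hand side becomes $\RHom_{{\mathrm J}_X\Lscr}(\Oscr_X,\Oscr_X)$ with the Yoneda product. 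Taking cohomology in degree $n$ yields the first isomorphism, and the formal ``opp'' is absorbed by the classical coincidence of the Hochschild cup and Yoneda products on cohomology.

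Second, for the homology isomorphism $\Psi$, I would apply $R\Gamma(X,-)$ to the quasi-isomorphism $\Psi : \Oscr_X \Lotimes_{{\mathrm J}_X\Lscr}\Oscr_X \cong \mathrm{HC}^{\Lscr}_{X,\bullet}$ of Proposition \ref{ref-10.1-41}, obtaining an isomorphism $R\Gamma(X, \Oscr_X \Lotimes_{{\mathrm J}_X\Lscr}\Oscr_X) \cong R\Gamma(X,\mathrm{HC}^{\Lscr}_{X,\bullet})$ in $D(\mathrm{Ab})$. Combining this with Proposition \ref{ref-4.4-19}, which identifies the left-hand side in degree $-n$ with $\HH_n^{\Lscr}(X)$, and using the convention $H_n(-)=H^{-n}(-)$, gives the second claimed isomorphism.

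Finally, the compatibility of the cup/cap structures under $(\Phi,\Psi^{-1})$ is a direct consequence of the identity $D\cap \Psi(u) = \Phi(D)\cap u$ established in equation \eqref{ref-10.2-43}. Applying $R\Gamma$ to this sheaf-level identity and passing to (co)homology translates exactly into the statement that $\Psi^{-1}$ intertwines the cap action of $R^\bullet\Gamma(X,\mathrm{HC}_{\Lscr,X}^\bullet)$ (via $\Phi$) with the Yoneda/cap action of $\HH^\bullet_{\Lscr}(X)$; the module structure on $\HH_\bullet^{\Lscr}$ is precisely the one carried by the rightmost factor of $\Oscr_X$, which is already the structure recorded in Proposition \ref{ref-4.4-19}. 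The main (minor) obstacle I anticipate is bookkeeping: keeping track of the ``opp'' in Proposition \ref{ref-9.1-37}, checking that lax monoidality of $R\Gamma$ really does yield an algebra map on cohomology without extra signs, and verifying that the module structure transported across $\Psi$ is the naive one rather than some twist.
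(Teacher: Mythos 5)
Your proposal follows essentially the same route as the paper: the theorem is assembled from Propositions \ref{ref-9.1-37}, \ref{ref-10.1-41} and \ref{ref-4.4-19} together with the local-to-global results of Sections 7 and 8 (Lemmas \ref{ref-7.1-25} and \ref{ref-7.2-27}), with the compatibility of cup and cap read off from \eqref{ref-10.2-43}. The one place you diverge is in disposing of the ``opp'': appealing to the coincidence of cup and Yoneda products does not by itself remove an anti-isomorphism --- what is needed (and what the paper invokes) is that $\mathrm{HC}^\bullet_{\Lscr,X}$ is \emph{commutative} as an algebra object in $D(\Mod(\underline{k}_X))$, so that its hypercohomology is graded commutative and the opposite is harmless.
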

\begin{proof} Combining Propositions \ref{ref-9.1-37} and \ref{ref-10.1-41} with the
discussions and the results of Sections 7 and 8 we
  get the result except that
  $R^n\Gamma(X,\mathrm{HC}_{\Lscr,X}^\bullet)$ is replaced by its
  opposite. However $\mathrm{HC}_{\Lscr,X}^\bullet$ is commutative as algebra object in
  $D(\Mod(\underline{k}_X))$. Hence
  $R^\bullet\Gamma(X,\mathrm{HC}_{\Lscr,X}^\bullet)$ is commutative as well.
\end{proof}

\appendix

\section{Jet bundles as formal exponentiations of Lie algebroids}
\label{ref-A-45}

\subsection{Introduction}

This appendix can be read more or or less independently of the main
paper. We show that the jet bundle of a Lie algebroid is a formal
groupoid (see \S\ref{ref-A.2-46}).  For simplicity of notation we work over rings.  Thus
$L$ is
a Lie algebroid locally free of rank $d$ over a commutative
$k$-algebra $R$.  This is not a restriction as we may easily pass to
spaces by sheafification.

We use self explanatory variants of our
earlier notations. E.g.  $\mathrm U_R L$ and ${\mathrm J}_R L$ instead of
$\mathrm U_X\Lscr$ and ${\mathrm J}_X\Lscr$.

The main result of this appendix appears without proof in
\cite[(A.5.10)]{Kapranov3}. At the time this paper was about to be published, Hessel
Posthuma pointed out to us
the anterior paper \cite{KP}, where a different proof appears of the fact that the jet bundle of a Lie
algebroid is a formal groupoid.
We make the relation between our proof and theirs precise in Remark \ref{ref-A.10-59}.

\subsection{Statement of the main result}\label{ref-A.2-46}

We will prove that a number of structures exist on ${\mathrm J}_RL$
(\emph{some of which already appeared before}). All algebras and
morphisms are unitary.
\begin{enumerate}
\item A commutative, associative algebra structure on ${\mathrm J}_RL$ (\emph{as in the
  main paper}).
\item Two ``unit maps''
\begin{align*}
1\!\!1_1:R\r {\mathrm J}_RL\\
1\!\!1_2:R\r {\mathrm J}_RL
\end{align*}
(\emph{with $1\!\!1_1$ being the $R$-algebra structure on ${\mathrm J}_R L$ appearing in
the main paper}).
The unit maps are algebra morphisms.
\item A ``comultiplication''
\[
\Delta:{\mathrm J}_RL\r {\mathrm J}_RL\ctimes_R {\mathrm J}_RL
\]
which is an algebra morphism and also a morphism
of $R$-$R$-bimodules
where $R$
acts through $1\!\!1_1$ on the left of ${\mathrm J}_RL$ and through $1\!\!1_2$ on the
right of ${\mathrm J}_RL$. This convention is also used to interpret the tensor
product ${\mathrm J}_RL\ctimes_R {\mathrm J}_RL$. \emph{Note that this convention is
different from the one which was in use in the main paper.}
\item A ``counit'' (\emph{as in the main paper})
\[
\epsilon:{\mathrm J}_RL\r R
\]
which is an algebra morphism and an $R$-$R$-bimodule morphism where $R$
is considered an $R$-bimodule in the obvious way.
\item An invertible ``antipode''  which is an algebra morphism
\[
S:{\mathrm J}_RL\r {\mathrm J}_RL
\]
and which exchanges the $R$-actions on ${\mathrm J}_RL$ through
$1\!\!1_1$ and $1\!\!1_2$.
\end{enumerate}
These structures satisfy the following additional properties
\begin{enumerate}
\item $\Delta$ is coassociative in the obvious sense.
\item $\epsilon\circ 1\!\!1_1=\Id_R=\epsilon\circ 1\!\!1_1$.
\item For all $\alpha\in {\mathrm J}_RL$ we have
\[
\sum_\alpha (1\!\! 1_1\circ\epsilon)(\alpha_{(1)})\alpha_{(2)}=\alpha
=\sum_\alpha \alpha_{(1)}(1\!\!1_2\circ\epsilon)(\alpha_{(2)})
\]
\item For all all $\alpha\in {\mathrm J}_RL$ we have
\begin{align*}
\sum_\alpha S(\alpha_{(1)})\alpha_{(2)}&=(1\!\!1_2\circ \epsilon)(\alpha)\\
\sum_\alpha \alpha_{(1)}S(\alpha_{(2)})&=(1\!\!1_1\circ \epsilon)(\alpha)
\end{align*}
\end{enumerate}
We will also show
\begin{enumerate} \setcounter{enumi}{4}
\item
 $S^2=\Id_{{\mathrm J}_R L}$
\end{enumerate}
Just as in the Hopf algebra case this turns out to be a formal
consequence of the commutativity of ${\mathrm J}_RL$ \cite[Cor.\ 1.5.12]{Mont1}.
\begin{remark} The listed properties are precisely
those enjoyed by the coordinate ring of a groupoid.
\end{remark}
\begin{remark}
\label{ref-A.2-47} If $R$ is a finitely generated and smooth over a field
  $k$ and $L=T\overset{\text{def}}{=} \Der_k(R)$ then ${\mathrm J}_R L$ is the
  completion of $R\otimes_k R$ at the kernel of the multiplication
  map $R\otimes_k R\r R$. In this case the structure maps are given by
the following formul\ae\
\begin{align*}
1\!\!1_1(a)&=a\ctimes 1\\
1\!\!1_2(a)&=1\ctimes a\\
\Delta(a\ctimes b)&=(a\ctimes 1)\ctimes (1\ctimes b)\\
\epsilon(a\ctimes b)&=ab\\
S(a\ctimes b)&=b\ctimes a
\end{align*}
One easily verifies that these maps have the indicated properties.
\end{remark}
\subsection{Proofs}
The algebra structure on ${\mathrm J}_RL$ and the counit $\epsilon$ were already
introduced in the main paper. See \eqref{ref-3.6-8} and
\eqref{ref-4.1-16}.  We also introduced two commuting left $\mathrm
U_R L$-module structures on ${\mathrm J}_R L$. Namely ${}^G\nabla$ and
${}^2\nabla$ (see \S\ref{ref-3.4-12}). For consistency we will denote
${}^G\nabla$ here by ${}^1\nabla$.
\begin{lemma}
\label{ref-A.3-48} The two actions
  ${}^i\nabla$ are compatible with the natural filtration on
  ${\mathrm J}_RL$. On the associated graded algebra of ${\mathrm J}_RL$, which is equal
to
  $S_R L^\ast$, the actions for ${}^1\nabla$ and ${}^2\nabla$ are as follows
\begin{enumerate}
\item For $r\in R$, ${}^1\nabla_r$ and ${}^2\nabla_r$ are
multiplication by $r$.
\item For $l\in L$, ${}^2\nabla_l$  is the contraction by $l$
and ${}^1\nabla_l$  is the contraction by $-l$.
\end{enumerate}
\end{lemma}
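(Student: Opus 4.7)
The plan is to combine the explicit formulas for ${}^1\nabla$ and ${}^2\nabla$ with the PBW-type duality of Lemma~\ref{ref-3.1-10} between the filtered pieces of $\mathrm U_R L$ and ${\mathrm J}_R L$. First, I would dispose of the claims about the $R$-action. By construction ${}^1\nabla$ extends the Grothendieck connection to a $\mathrm U_R L$-module structure on ${\mathrm J}_R L$, and for $r\in R\subset\mathrm U_R L$ this coincides with the canonical $R$-module structure on ${\mathrm J}_R L=\Hom_R(\mathrm U_R L,R)$, namely multiplication by~$r$. For ${}^2\nabla_r$, a one-line computation gives $({}^2\nabla_r\alpha)(D)=\alpha(Dr)=\alpha(rD)=r\alpha(D)$, using centrality of $R$ in $\mathrm U_R L$. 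Both actions are manifestly filtration-preserving and induce multiplication by $r$ on $\gr{\mathrm J}_R L\cong S_R L^\ast$.

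Next, I would verify the filtration shift for $l\in L$. If $\alpha\in F_n{\mathrm J}_R L$ vanishes on $F^n\mathrm U_R L$ and $D\in F^{n-1}\mathrm U_R L$, then $lD$ and $Dl$ both lie in $F^n\mathrm U_R L$, and $D\in F^{n-1}\subset F^n\mathrm U_R L$ as well; hence $\alpha(D)=\alpha(lD)=\alpha(Dl)=0$. Plugging into the defining formulas yields $({}^1\nabla_l\alpha)(D)=l(\alpha(D))-\alpha(lD)=0$ and $({}^2\nabla_l\alpha)(D)=\alpha(Dl)=0$, proving that ${}^i\nabla_l$ maps $F_n{\mathrm J}_R L$ into $F_{n-1}{\mathrm J}_R L$ and so induces a degree $-1$ operation on the associated graded.

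To identify the induced maps, I would use the PBW pairing. Represent $\phi\in S^n L^\ast$ by some $\alpha\in{\mathrm J}_R L$ in the appropriate filtration piece, and let $D\in F^{n-1}\mathrm U_R L$ have PBW class $d\in S^{n-1}L$. Then $\langle{}^2\nabla_l\phi,d\rangle=\alpha(Dl)$, and the class of $Dl$ in $S^n L=\gr F^n\mathrm U_R L$ is the symmetric product $d\cdot l$; so $\langle{}^2\nabla_l\phi,d\rangle=\langle\phi,d\cdot l\rangle$, which is precisely contraction by $l$ applied to $\phi$ and evaluated at $d$. Thus ${}^2\nabla_l$ acts as contraction by $l$. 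For ${}^1\nabla_l$, the identity $({}^1\nabla_l\alpha+{}^2\nabla_l\alpha)(D)=l(\alpha(D))-\alpha([l,D])$ shows that the correction term vanishes modulo higher filtration, since $\alpha(D)=0$ and $[l,D]\in F^{n-1}\mathrm U_R L$ forces $\alpha([l,D])=0$. Hence ${}^1\nabla_l=-{}^2\nabla_l$ on the associated graded, giving contraction by $-l$.

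The main ingredient is the standard PBW commutator bound $[F^a\mathrm U_R L,F^b\mathrm U_R L]\subset F^{a+b-1}\mathrm U_R L$, which in the case $a=1$, $b=n-1$ is what kills the correction term and so both produces the sign difference between ${}^1\nabla$ and ${}^2\nabla$ and ensures that the second term in the formula for ${}^1\nabla_l$ dominates on the associated graded. Everything else is routine duality bookkeeping for the pairing of Lemma~\ref{ref-3.1-10}.
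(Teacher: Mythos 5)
The paper gives no proof of this lemma (it is stated as an unproved observation), so there is nothing to compare against; your argument is essentially the one the authors must have had in mind: check filtration compatibility directly from the defining formulas, then identify the induced maps on $\gr {\mathrm J}_R L \cong S_R L^\ast$ via the duality of Lemma \ref{ref-3.1-10}, using the commutator bound $[F^a\mathrm U_R L,F^b\mathrm U_R L]\subset F^{a+b-1}\mathrm U_R L$ to kill the lower-order corrections. The treatment of the case $l\in L$ --- the degree shift, the identification of ${}^2\nabla_l$ with contraction via $[Dl]=[D]\cdot l$ in $\gr\mathrm U_R L$, and the cancellation $({}^1\nabla_l+{}^2\nabla_l)\alpha(D)=l(\alpha(D))-\alpha([l,D])\equiv 0$ --- is correct.

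There is, however, one genuinely false step in the case $r\in R$: the claim that $\alpha(Dr)=\alpha(rD)$ ``using centrality of $R$ in $\mathrm U_R L$''. $R$ is \emph{not} central in $\mathrm U_R L$; the anchor gives $lr-rl=l(r)$, and this non-centrality is the whole point of a Lie algebroid. (The paper's declaration that $\mathrm U_X\Lscr$ is a ``central $\Oscr_X$-bimodule'' is a \emph{choice} of bimodule structure for forming tensor products, not a statement about ring multiplication; in $({}^2\nabla_r\alpha)(D)=\alpha(Dr)$ the product $Dr$ is taken in the ring.) Concretely $({}^2\nabla_r\alpha)(l)=\alpha(rl+l(r))=r\alpha(l)+l(r)\epsilon(\alpha)$, so ${}^2\nabla_r$ is \emph{not} multiplication by $r$ on ${\mathrm J}_R L$ itself --- by Lemma \ref{ref-A.4-49} it is multiplication by the jet $1\!\!1_2(r)$, which agrees with $r\cdot 1$ only modulo $F_1{\mathrm J}_R L$. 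The conclusion you need (multiplication by $r$ on the associated graded) is still true, but its proof is the same commutator argument you use for $l\in L$: $Dr=rD+[D,r]$ with $[D,r]\in F^{|D|-1}\mathrm U_R L$, and the commutator term is annihilated by $\alpha$ when one pairs against the correct graded piece. Replace the centrality appeal by this one line and the proof is complete.
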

For $r\in R\subset \mathrm U_RL$ we define
$
1\!\!1_i(r)={}^i\nabla_r(1)
$.
Concretely
$1\!\!1_1(r)(D)=rD(1)$ and
$1\!\!1_2(r)(D)=D(r)$ and hence in particular
\begin{equation}
\label{unit}
1\!\!1_1(1)=1=1\!\!1_2(1).
\end{equation}
Here the ``$1$'' in the middle is the \emph{algebra unit} for $\mathrm J_R L$
(see \S\ref{ref-3.3-5}).
Through the identification $\mathrm J_R L=\Hom_R(\mathrm U_R L,R)$
it corresponds to the counit on $\mathrm U_R L$ which sends $D$ to $D(1)$.
Equation \eqref{unit} expresses the fact that $1\!\!1_1$ and $1\!\!1_2$ preserve
algebra units.


We must establish a number of trivial properties of ${}^i\nabla$.
\begin{lemma}
\label{ref-A.4-49}
We have for $\alpha\in {\mathrm J}_RL$, $r,s\in R$ and $i=1,2$
\[
{}^i\nabla_r \alpha=1\!\!1_i(r)\alpha
\]
\[
1\!\!1_i(rs)=1\!\!1_i(r)1\!\!1_i(s)
\]
Thus the maps $1\!\!1_i$ are algebra morphisms $R\r {\mathrm J}_RL$. Furthermore we have
\[
\epsilon \circ 1\!\!1_1=\Id_R=\epsilon \circ 1\!\!1_2
\]
\end{lemma}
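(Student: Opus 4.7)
The plan is to derive all four assertions from the single identity ${}^i\nabla_r\alpha=1\!\!1_i(r)\alpha$. Once that is in hand, multiplicativity of $1\!\!1_i$ drops out at once: because the product $rs$ in $R$ coincides with the algebra product in $\mathrm U_RL$ and ${}^i\nabla$ is a left $\mathrm U_RL$-action, one has ${}^i\nabla_{rs}={}^i\nabla_r\circ{}^i\nabla_s$, so applying both sides to $1\in{\mathrm J}_RL$ and invoking the core identity gives $1\!\!1_i(rs)={}^i\nabla_r(1\!\!1_i(s))=1\!\!1_i(r)\,1\!\!1_i(s)$. The counit identities are then immediate from the explicit formulas $1\!\!1_1(r)(D)=rD(1)$ and $1\!\!1_2(r)(D)=D(r)$ recorded before the lemma: evaluation at $D=1$ yields $r$ in both cases.

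The core identity itself I would verify by direct manipulation of the coproduct. For $i=1$, expanding via \eqref{ref-3.6-8} gives
\[
(1\!\!1_1(r)\alpha)(D)=\sum_D 1\!\!1_1(r)(D_{(1)})\alpha(D_{(2)})=r\sum_D \epsilon(D_{(1)})\alpha(D_{(2)}),
\]
which collapses to $r\alpha(D)$ by the counit axiom $\sum_D\epsilon(D_{(1)})D_{(2)}=D$ on $\mathrm U_RL$; this matches $({}^1\nabla_r\alpha)(D)$ since ${}^1\nabla_r$ acts on ${\mathrm J}_RL=\uHom_R(\mathrm U_RL,R)$ as the $R$-linear multiplication by $r$. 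For $i=2$, the analogous computation reduces to the \emph{twist identity}
\[
Dr=\sum_D D_{(1)}(r)\,D_{(2)}\qquad\text{in }\mathrm U_RL;
\]
granted it, $R$-linearity of $\alpha$ yields $({}^2\nabla_r\alpha)(D)=\alpha(Dr)=\sum_D D_{(1)}(r)\alpha(D_{(2)})=(1\!\!1_2(r)\alpha)(D)$.

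The only non-routine step, and what I expect to be the main obstacle, is the twist identity. I would prove it by induction on the filtration degree in $\mathrm U_RL$. The case $D\in R$ is trivial; the case $D=l\in L$ reduces to the commutator relation $[l,r]=l(r)$ combined with the primitive coproduct $\Delta(l)=l\otimes 1+1\otimes l$ from \eqref{ref-3.3-4}; and the inductive step from $D,D'$ to $DD'$ uses that $\Delta$ is an algebra morphism together with the fact that the action of $\mathrm U_RL$ on $R$ is an algebra action, i.e.\ $A(ab)=\sum_A A_{(1)}(a)A_{(2)}(b)$. Every other piece of the lemma is a direct unwinding of the definitions in \eqref{ref-3.3-4} and \eqref{ref-3.6-8}.
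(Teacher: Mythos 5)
Your proposal follows the paper's proof almost step for step: the same reduction of multiplicativity to the core identity ${}^i\nabla_r\alpha=1\!\!1_i(r)\alpha$ by applying ${}^i\nabla_{rs}={}^i\nabla_r{}^i\nabla_s$ to $1$, the same counit-axiom computation for $i=1$, the same use of $R$-linearity of $\alpha$ for $i=2$, and the same dismissal of $\epsilon\circ1\!\!1_i=\Id_R$ as immediate. The one point of divergence is the identity $Dr=\sum_D D_{(1)}(r)D_{(2)}$: the paper simply invokes the fact that $\mathrm U_RL$ is a Hopf algebroid with anchor, citing Xu, whereas you prove it by induction on the filtration degree. Your induction is sound (base cases $R$ and $L$ are as you say, and the expression $\sum_D D_{(1)}(r)D_{(2)}$ is well defined on $\mathrm U_RL\otimes_R\mathrm U_RL$ since $R$ is central), and it makes the lemma self-contained; the only quibble is that the inductive step from $D,D'$ to $DD'$ really needs multiplicativity of $\Delta$ together with associativity of the action on $R$, i.e.\ $(AB)(r)=A(B(r))$, rather than the module-algebra identity $A(ab)=\sum_A A_{(1)}(a)A_{(2)}(b)$ that you quote — a harmless imprecision in an otherwise correct outline.
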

\begin{proof}
Assuming the first claim the second claim follows:
\[
1\!\!1_i(rs)\alpha={}^i\nabla_{rs}\alpha
={}^i\nabla_{r}{}^i\nabla_{s}\alpha
=1\!\!1_i(r)1\!\!1_i(s)\alpha
\]
Taking $\alpha=1$ yields what we want.

Now we prove the first claim.
We first consider the case $i=1$.
Let $D\in \mathrm U_R L$. Then we compute
\begin{multline*}
(1\!\!1_1(r)\alpha)(D)=\sum_D (1\!\!1_1(r))(D_{(1)}) \alpha(D_{(2)})
=r\epsilon(D_{(1)})\alpha(D_{(2)})\\
=r\alpha(\epsilon(D_{(1)})D_{(2)})
=r\alpha(D)
=(r\alpha)(D)
=({}^1\nabla_r\alpha)(D)
\end{multline*}
Now we consider the case $i=2$. We compute
\begin{multline*}
(1\!\!1_2(r)\alpha)(D)=\sum_D (1\!\!1_2(r))(D_{(1)}) \alpha(D_{(2)})
=\sum_D D_{(1)}(r) \alpha(D_{(2)})\\
=\sum_D  \alpha(D_{(1)}(r) D_{(2)})
=\alpha(Dr)
=({}^2\nabla_r\alpha)(D)
\end{multline*}
where in the fourth equality we have used the fact that $\mathrm U_RL$ is a so-called
Hopf algebroid with anchor \cite{Xu}.
The third claim is a trivial verification.
\end{proof}
\begin{lemma}
\label{ref-A.5-50} We have
\[
{}^i\nabla_D (1)=(1\!\! 1_i\circ \epsilon)(D)
\]
\end{lemma}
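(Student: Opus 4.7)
My plan is to handle the two cases $i=2$ and $i=1$ separately, with the second requiring an induction.

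For $i=2$, I would argue by direct computation. Evaluate both sides on an arbitrary section $E\in\mathrm U_RL$. The left-hand side unfolds via the definition of ${}^2\nabla$ and the fact that the algebra unit of $\mathrm J_RL$ is $\epsilon$:
\[
({}^2\nabla_D(1))(E)=1(ED)=\epsilon(ED)=(ED)(1)=E(D(1))=E(\epsilon(D)).
\]
On the other hand $(1\!\!1_2(\epsilon(D)))(E)=E(\epsilon(D))$ by the very definition of $1\!\!1_2$, and we are done.

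For $i=1$ the claim is not tautological and I would prove it by induction on the filtration degree of $D$ in $F^\bullet\mathrm U_RL$. By the PBW statement \eqref{ref-3.2-3}, every section of $F^n\mathrm U_RL$ is an $R$-linear combination of products $l_1\cdots l_n$ with $l_i\in L$, so it suffices to check the base case $D\in R$ and then to propagate the identity under left multiplication by elements of $L$. The base case is immediate from Lemma \ref{ref-A.4-49}: if $D=r\in R$ then ${}^1\nabla_r(1)=1\!\!1_1(r)\cdot 1=1\!\!1_1(r)=1\!\!1_1(\epsilon(r))$, using that $\epsilon(r)=r(1)=r$ (since $r\in R$ acts as multiplication on itself).

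For the inductive step, suppose the identity holds for $D'$; I want to show it for $lD'$ with $l\in L$. Using that ${}^1\nabla$ is a left $\mathrm U_RL$-module structure, ${}^1\nabla_{lD'}(1)={}^1\nabla_l\bigl((1\!\!1_1\circ\epsilon)(D')\bigr)$. I would then expand this using the explicit connection formula ${}^1\nabla_l\beta(E)=l(\beta(E))-\beta(lE)$ and the definition of $1\!\!1_1$:
\[
({}^1\nabla_l(1\!\!1_1(\epsilon(D'))))(E)=l(\epsilon(D')\epsilon(E))-\epsilon(D')\epsilon(lE).
\]
The Leibniz rule for $l$ acting on $R$ splits the first term into $l(\epsilon(D'))\epsilon(E)+\epsilon(D')\,l(E(1))$, and the second term equals $\epsilon(D')\,l(E(1))$ because $\epsilon(lE)=(lE)(1)=l(E(1))$. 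These two pieces cancel, leaving $l(\epsilon(D'))\epsilon(E)=\epsilon(lD')\epsilon(E)=(1\!\!1_1(\epsilon(lD')))(E)$, which is the desired right-hand side.

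The only genuine subtlety is the reduction to iterated left multiplication by elements of $R$ and $L$; once that is granted, each step of the induction is a short computation whose success hinges on the cancellation between the two terms in the Grothendieck connection formula. I expect this Leibniz cancellation to be the one point worth spelling out; everything else is routine bookkeeping with the definitions already recorded in \S\ref{ref-3.3-5} and \S\ref{ref-3.4-12}.
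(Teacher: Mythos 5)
Your proof is correct, and its computational core (the explicit formulas for ${}^1\nabla_l$ and ${}^2\nabla_l$, with the Leibniz cancellation doing the real work) is the same as in the paper; but the reduction to generators is organized differently. The paper observes that the right-hand side equals ${}^i\nabla_{D(1)}(1)$, so after replacing $D$ by $D-D(1)$ one only has to show ${}^i\nabla_D(1)=0$ for $D\in\ker\epsilon=\mathrm U_RL\cdot L$; since ${}^i\nabla$ is a \emph{left} module structure, ${}^i\nabla_{D'l}(1)={}^i\nabla_{D'}({}^i\nabla_l(1))$, and everything collapses to the one-line check ${}^i\nabla_l(1)=0$ for $l\in L$ --- no induction, and both values of $i$ are treated uniformly. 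You instead build $D$ up by \emph{left} multiplication and run an induction on the filtration, which forces you to carry the term $1\!\!1_1(\epsilon(D'))$ through each step and re-verify the cancellation there; this works (your inductive step is a correct computation, and the base case ${}^1\nabla_r(1)=1\!\!1_1(r)$ is in fact the very definition of $1\!\!1_1$), but it is more laborious, and it leaves a small bookkeeping point implicit: the identity is also stable under left multiplication by $R$ (which follows from Lemma \ref{ref-A.4-49} and $\epsilon(rD)=r\epsilon(D)$), and one should say in which order the factors from $R$ and $L$ are peeled off when spanning $F^n\mathrm U_RL$. The paper's right-factorization trick sidesteps all of this, which is what it buys you.
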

\begin{proof} The right-hand side is equal to ${}^i\nabla_{D(1)}(1)$.
Hence replacing $D$ by $D-D(1)$ we must prove that if $D$ is such that $D(1)=0$ then
${}^i\nabla_D (1)=0$. Such a $D$ is of the form $D'l$, $l\in L$. Hence
we reduce to the case $D=l$. We now conclude by using the explicit
formul\ae\  for ${}^i\nabla$.
\end{proof}
We define two pairings
between $\mathrm U_RL$ and $\alpha\in {\mathrm J}_RL$:
\[
\langle \alpha,D\rangle_i=\epsilon({}^i\nabla_D\alpha)
\]
for $i=1,2$. We have
$
\langle \alpha,D\rangle_2=\alpha(D)
$.
Hence $\langle-,-\rangle_2$ is the pairing $\langle-,-\rangle$ in
the main paper (see \S\ref{ref-3.5-7}).
These pairings satisfy suitable linearity properties
with respect to the $R$-actions via $1\!\!1_i$.
\begin{lemma}
\label{ref-A.6-51}
For $r\in R$, $\alpha \in {\mathrm J}_R L$, $D\in \mathrm U_R L$
we have
\begin{equation}
\label{ref-A.1-52}
\langle \alpha,rD\rangle_i=r\langle\alpha, D\rangle_i=
\langle \alpha 1\!\!1_{\bar{\imath}}(r) ,  D\rangle_i
\end{equation}
\begin{equation}
\label{ref-A.2-53}
\langle \alpha,Dr\rangle_i=\langle 1\!\!1_{i}(r)\alpha,
D\rangle_i
\end{equation}
where $\bar{\imath}=3-i$ and where in the second line we have
used the right action of $R$ on $\mathrm U_R L$ obtained from
the inclusion $R\subset \mathrm U_R L$.
\end{lemma}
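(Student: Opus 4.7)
The plan is to reduce everything to three facts already available: $(i)$ ${}^i\nabla$ is an algebra morphism $\mathrm U_R L \to \operatorname{End}(\mathrm J_R L)$ (it is a module structure); $(ii)$ by Lemma~\ref{ref-A.4-49} we have ${}^i\nabla_r(\alpha)=1\!\!1_i(r)\alpha$ and $\epsilon\circ 1\!\!1_i=\Id_R$; and $(iii)$ the actions ${}^1\nabla$ and ${}^2\nabla$ commute, while the product on $\mathrm J_R L$ is commutative. The proof will be a direct unwinding of the definition $\langle\alpha,D\rangle_i=\epsilon({}^i\nabla_D\alpha)$.

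For the left equality of \eqref{ref-A.1-52} I would write $rD$ as a product inside $\mathrm U_R L$, so that ${}^i\nabla_{rD}={}^i\nabla_r\circ{}^i\nabla_D$ by $(i)$. Using $(ii)$ this becomes multiplication by $1\!\!1_i(r)$, hence
\[
\langle\alpha,rD\rangle_i=\epsilon\bigl(1\!\!1_i(r)\cdot{}^i\nabla_D\alpha\bigr)=\epsilon(1\!\!1_i(r))\,\epsilon({}^i\nabla_D\alpha)=r\,\langle\alpha,D\rangle_i,
\]
since $\epsilon$ is an algebra morphism and $\epsilon\circ 1\!\!1_i=\Id_R$. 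The second equation \eqref{ref-A.2-53} is the mirror image: write $Dr$ as a product in $\mathrm U_RL$, apply $(i)$ to get ${}^i\nabla_{Dr}={}^i\nabla_D\circ{}^i\nabla_r$, then use ${}^i\nabla_r(\alpha)=1\!\!1_i(r)\alpha$ from $(ii)$ and apply $\epsilon$.

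The only step that requires a genuinely new ingredient is the right equality of \eqref{ref-A.1-52}, and this is where the ``twist'' $\bar{\imath}=3-i$ enters. Here I would exploit commutativity of $\mathrm J_R L$ to rewrite
\[
\alpha\cdot 1\!\!1_{\bar{\imath}}(r)=1\!\!1_{\bar{\imath}}(r)\cdot\alpha={}^{\bar{\imath}}\nabla_r(\alpha),
\]
and then use the commutation of ${}^1\nabla$ and ${}^2\nabla$ (recorded in \S\ref{ref-3.4-12}) to slide ${}^{\bar{\imath}}\nabla_r$ past ${}^i\nabla_D$:
\[
{}^i\nabla_D\bigl(\alpha\cdot 1\!\!1_{\bar{\imath}}(r)\bigr)={}^i\nabla_D\,{}^{\bar{\imath}}\nabla_r(\alpha)={}^{\bar{\imath}}\nabla_r\,{}^i\nabla_D(\alpha)=1\!\!1_{\bar{\imath}}(r)\cdot{}^i\nabla_D(\alpha).
\]
Applying $\epsilon$ and using $\epsilon\circ 1\!\!1_{\bar{\imath}}=\Id_R$ gives $\langle\alpha\,1\!\!1_{\bar{\imath}}(r),D\rangle_i=r\,\langle\alpha,D\rangle_i$, completing the chain.

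No real obstacle is expected; the only point worth being careful about is keeping track of which action is being used where, so that the appearance of $\bar{\imath}$ (rather than $i$) on the $\mathrm J_R L$-side is justified purely by the commuting-actions argument and not smuggled in. Everything else is a formal manipulation with $\epsilon$ and the two module structures.
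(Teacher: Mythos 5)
Your proof is correct and follows essentially the same route as the paper: equation \eqref{ref-A.2-53} is verified by exactly the computation ${}^i\nabla_{Dr}={}^i\nabla_D\circ{}^i\nabla_r$ followed by Lemma \ref{ref-A.4-49}, and the identities \eqref{ref-A.1-52} are deduced from Lemma \ref{ref-A.4-49} together with facts the paper has already established (multiplicativity of $\epsilon$, commutativity of ${\mathrm J}_RL$, and the commutation of ${}^1\nabla$ with ${}^2\nabla$). The only difference is that the paper dismisses \eqref{ref-A.1-52} as a ``direct consequence'' while you spell out the commuting-actions argument needed for the $\bar{\imath}$-twisted equality, which is a faithful and welcome filling-in of that gap.
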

\begin{proof}
The identities in \eqref{ref-A.1-52} are a direct consequence
of Lemma \ref{ref-A.4-49}. For \eqref{ref-A.2-53} we compute
\[
\langle \alpha,Dr\rangle_i=\epsilon({}^i\nabla_{Dr}(\alpha))=
\epsilon({}^i\nabla_{D}{}^i\nabla_{r}(\alpha))
=\epsilon({}^i\nabla_{D}(1\!\!1_i(r)\alpha))=\langle 1\!\!1_{i}(r)\alpha,
D\rangle_i\qed
\]
\def\qed{}\end{proof}
 Furthermore we have the following
properties
\begin{lemma} \label{ref-A.7-54} We have for $D\in \mathrm U_RL$,
$\alpha\in{\mathrm J}_R L$:
\begin{equation}
\label{ref-A.3-55}
\langle 1,D\rangle_1=\epsilon(D)=\langle 1,D\rangle_2
\end{equation}
\begin{equation}
\label{ref-A.4-56}
\langle \alpha,1\rangle_1=\epsilon(\alpha)=\langle \alpha,1\rangle_2
\end{equation}
\end{lemma}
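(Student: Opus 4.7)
The plan is to reduce both identities directly to the earlier Lemmas \ref{ref-A.4-49} and \ref{ref-A.5-50}, after unwinding the definition $\langle\alpha,D\rangle_i = \epsilon({}^i\nabla_D\alpha)$.

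For \eqref{ref-A.3-55}, I would specialize Lemma \ref{ref-A.5-50} to the unit $\alpha=1$ of ${\mathrm J}_RL$, which yields ${}^i\nabla_D(1) = (1\!\!1_i\circ\epsilon)(D)$. Applying $\epsilon$ to both sides and then invoking the relation $\epsilon\circ 1\!\!1_i = \Id_R$ from Lemma \ref{ref-A.4-49} gives
\[
\langle 1,D\rangle_i = \epsilon({}^i\nabla_D(1)) = \epsilon(1\!\!1_i(\epsilon(D))) = \epsilon(D),
\]
simultaneously for $i=1$ and $i=2$.

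For \eqref{ref-A.4-56}, the key observation is that $1\in\mathrm U_RL$ acts as the identity in any left $\mathrm U_RL$-module, so in particular ${}^i\nabla_1(\alpha)=\alpha$ for both $i=1$ and $i=2$. This can also be verified directly from the explicit formul\ae\ of \S\ref{ref-3.4-12}: $({}^2\nabla_1\alpha)(D) = \alpha(D\cdot 1)=\alpha(D)$, and similarly ${}^G\nabla_1$ acts trivially on ${\mathrm J}_RL$ since $\mathrm U_RL$-module structures always send the unit to the identity. Hence $\langle\alpha,1\rangle_i = \epsilon({}^i\nabla_1\alpha) = \epsilon(\alpha)$.

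There is essentially no obstacle here; the lemma is a bookkeeping consequence of the preceding lemmas, serving to record the normalization properties of the pairings $\langle -,-\rangle_i$ that will be needed later when verifying the groupoid axioms.
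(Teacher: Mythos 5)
Your proof is correct. For \eqref{ref-A.4-56} it coincides with the paper's (an ``immediate verification'' from ${}^i\nabla_1=\Id$). For \eqref{ref-A.3-55} you take a slightly different route: the paper proves $({}^i\nabla_D(1))(1)=D(1)$ directly, noting that the case $i=1$ requires writing $D$ as a product of elements of $L$ and expanding ${}^1\nabla_D(1)(1)$ -- which essentially repeats the induction already carried out in the proof of Lemma \ref{ref-A.5-50}. You instead quote Lemma \ref{ref-A.5-50} as a black box, obtaining ${}^i\nabla_D(1)=1\!\!1_i(\epsilon(D))$, and then apply $\epsilon\circ 1\!\!1_i=\Id_R$ from Lemma \ref{ref-A.4-49}. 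Since Lemma \ref{ref-A.5-50} precedes the statement in question and its proof does not depend on it, there is no circularity, and your factorization through the two earlier lemmas is the cleaner bookkeeping: it makes explicit that \eqref{ref-A.3-55} carries no new computational content, whereas the paper's phrasing re-does the reduction to generators of $\mathrm U_RL$.
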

\begin{proof} For \eqref{ref-A.3-55} we need to prove
$
({}^i\nabla_D(1))(1)=D(1)
$.
For $i=2$ this is immediate. The case $i=1$ follows by writing out $D$ as a product of
elements of $L$ and
working out the expression ${}^1\nabla_D(1)(1)$. \eqref{ref-A.4-56} is
an immediate verification.
\end{proof}
\begin{lemma}
\label{ref-A.8-57}
The pairings $\langle-,-\rangle_i$ are non-degenerate
$R$-linear pairings (in the sense of Lemma \ref{ref-3.1-10}) where $R$ acts on ${\mathrm
J}_RL$ via $1\!\!1_{\bar{\imath}}$.
\end{lemma}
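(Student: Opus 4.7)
The plan is to mimic the argument of Lemma \ref{ref-3.1-10} by reducing non-degeneracy to a statement on the associated graded, using the explicit description of the actions ${}^i\nabla$ supplied by Lemma \ref{ref-A.3-48}. The $R$-bilinearity of $\langle-,-\rangle_i$, with $R$ acting on ${\mathrm J}_R L$ via $1\!\!1_{\bar\imath}$, is already content of Lemma \ref{ref-A.6-51}; the only remaining point is therefore non-degeneracy in the filtered/topological sense.

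First I would check that the actions ${}^i\nabla$ are compatible with the filtrations quantitatively: Lemma \ref{ref-A.3-48} implies that for $D\in F^k\mathrm U_R L$ one has ${}^i\nabla_D(F_n{\mathrm J}_R L)\subseteq F_{n-k}{\mathrm J}_R L$. Combined with $\epsilon(F_1{\mathrm J}_R L)=0$, this shows that $\langle-,-\rangle_i$ vanishes on $F_{n+1}{\mathrm J}_R L\times F^n\mathrm U_R L$, so it descends to an $R$-bilinear pairing
\[
{\mathrm J}_R L/F_{n+1}{\mathrm J}_R L\;\times\;F^n\mathrm U_R L\;\longrightarrow\;R.
\]

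Next I would identify the induced pairing on associated graded. By \eqref{ref-3.2-3} and \eqref{ref-3.7-9} this is a map $S^n_R L^\ast\times S^n_R L\to R$. Using Lemma \ref{ref-A.3-48}(2), for $\bar\alpha\in S^n_R L^\ast$ and $\bar D=l_1\cdots l_n\in S^n_R L$ lifted to $D=l_1\cdots l_n\in F^n\mathrm U_R L$, one computes
\[
\epsilon({}^i\nabla_D(\bar\alpha))\;=\;(-1)^{n(2-i)}\,\iota_{l_n}\cdots\iota_{l_1}(\bar\alpha),
\]
which up to a nonzero sign is the standard duality pairing of $S^n_R L^\ast$ with $S^n_R L$. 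Since $L$ is locally free of finite rank, this pairing is perfect, so the associated graded pairing is non-degenerate.

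Finally, exactly as in Lemma \ref{ref-3.1-10}, non-degeneracy on associated graded propagates to non-degeneracy of each filtered pairing, producing isomorphisms $F^n\mathrm U_R L\cong\Hom_R({\mathrm J}_R L/F_{n+1}{\mathrm J}_R L,R)$. Taking the colimit in $n$ gives $\mathrm U_R L\cong\Hom^{\mathrm{cont}}_R({\mathrm J}_R L,R)$, while passing to the inverse limit on the other side gives ${\mathrm J}_R L\cong\Hom_R(\mathrm U_R L,R)$, which are the two isomorphisms asserted by the lemma. The only real subtlety is the sign bookkeeping for $i=1$, which is however entirely dictated by Lemma \ref{ref-A.3-48}(2) and disappears as soon as one observes that non-degeneracy is unaffected by the overall factor $(-1)^n$.
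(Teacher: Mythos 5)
Your proposal is correct and follows essentially the same route as the paper: the paper disposes of the case $i=2$ by citing Lemma \ref{ref-3.1-10} directly (since $\langle-,-\rangle_2$ is the pairing of the main text) and handles $i=1$ by passing to associated graded objects and invoking Lemma \ref{ref-A.3-48}, which is exactly your reduction. Your version merely spells out the sign $(-1)^n$ coming from ${}^1\nabla_l$ acting as contraction by $-l$ and notes explicitly that it does not affect non-degeneracy, which the paper leaves implicit.
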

\begin{proof} The case $i=2$ is Lemma \ref{ref-3.1-10}. The
case $i=1$ is handled in a similar way by passing to associated
graded objects and applying Lemma \ref{ref-A.3-48} to the definition
of $\langle-,-\rangle_1$.
\end{proof}
\begin{lemma} \label{ref-A.9-58} We have
for $D\in \mathrm U_R L$
\[
{}^i\nabla_D(\alpha\beta)=\sum_D {}^i\nabla_{D^{(1)}}(\alpha){}^i\nabla_{D^{(2)}}(\beta)
\]
\end{lemma}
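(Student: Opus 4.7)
The plan is to handle $i=1$ and $i=2$ via slightly different routes, both of which ultimately rest on the multiplicativity of the comultiplication $\Delta$ stated in \eqref{ref-3.3-4}, together with the fact that each ${}^i\nabla$ is a \emph{left} $\mathrm U_R L$-action and therefore satisfies ${}^i\nabla_{DE}={}^i\nabla_D\circ{}^i\nabla_E$.

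For $i=2$ the identity is essentially a one-line computation from the definitions. Evaluating both sides on a test element $D'\in\mathrm U_R L$, one expands $(\alpha\beta)(D'D)$ using the product formula \eqref{ref-3.6-8}, then applies multiplicativity of $\Delta$ to rewrite $\Delta(D'D)=\sum D'{}^{(1)}D^{(1)}\otimes D'{}^{(2)}D^{(2)}$, recognizes the factors as $({}^2\nabla_{D^{(j)}}-)(D'{}^{(j)})$, and finally uses \eqref{ref-3.6-8} in reverse to collect the product on ${\mathrm J}_R L$. No induction is needed in this case.

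For $i=1$ I would proceed by induction on the filtration degree of $D$. The base cases are handled individually: $D=1$ is immediate since $\Delta(1)=1\otimes 1$ and ${}^1\nabla_1=\Id$; for $D=r\in R$ we use $\Delta(r)=r\otimes 1$ together with Lemma \ref{ref-A.4-49} to reduce both sides to $1\!\!1_1(r)\alpha\beta$; and the crucial case $D=l\in L$ is \emph{exactly} the Leibniz rule \eqref{ref-3.10-13}, which matches $\Delta(l)=l\otimes 1+1\otimes l$. For the inductive step, assume the formula for $D$ and $E$; then
\begin{align*}
{}^1\nabla_{DE}(\alpha\beta)&={}^1\nabla_D\Bigl(\sum_E{}^1\nabla_{E^{(1)}}(\alpha)\,{}^1\nabla_{E^{(2)}}(\beta)\Bigr)\\
&=\sum_{D,E}{}^1\nabla_{D^{(1)}E^{(1)}}(\alpha)\,{}^1\nabla_{D^{(2)}E^{(2)}}(\beta)\\
&=\sum_{DE}{}^1\nabla_{(DE)^{(1)}}(\alpha)\,{}^1\nabla_{(DE)^{(2)}}(\beta),
\end{align*}
the second equality applying the hypothesis for $D$ to each factor (using $i=2$'s proof, or the already-established Leibniz rule for ${}^1\nabla$ at this filtration level), and the last being multiplicativity of $\Delta$. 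Since $\mathrm U_R L$ is generated as an algebra by $R$ and $L$, this closes the induction.

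The only mild obstacle is bookkeeping: $\Delta$ takes values not in $\mathrm U_R L\otimes_R\mathrm U_R L$ but in the subalgebra singled out after \eqref{ref-3.3-4}, and I must check that all Sweedler expansions stay inside this subalgebra so that the products $D^{(1)}E^{(1)}$ and $D^{(2)}E^{(2)}$ are well defined. This is routine given the compatibility of $\Delta$ with the filtration.
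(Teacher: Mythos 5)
Your proof is correct and takes essentially the same route as the paper: for $i=2$ it is the identical one-line computation (evaluate against a test element, use multiplicativity of $\Delta$ on $\mathrm U_R L$ and the product formula \eqref{ref-3.6-8} on ${\mathrm J}_RL$), and for $i=1$ your induction on algebra generators of $\mathrm U_R L$ simply makes explicit what the paper asserts by appealing to the Leibniz rule \eqref{ref-3.10-13} and the $R$-algebra structure via $1\!\!1_1$. No gaps.
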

\begin{proof}
  The case $i=1$ we have already encountered in the main paper. It
  expresses the fact that ${\mathrm J}_RL$ is an $R$-algebra (via $1\!\!1_1$)
  and that the multiplication on ${\mathrm J}_R L$ is compatible with the
  Grothendieck connection. See \S\ref{ref-3.3-5} and
  \eqref{ref-3.10-13}.

The case $i=2$ is an easy verification
\begin{multline*}
{}^2 \nabla_D(\alpha\beta)(E)=(\alpha\beta)(ED)
=\sum_{E,D} \alpha(E_{(1)}D_{(1)})\beta(E_{(2)}D_{(2)})\\
={}^2 \nabla_{D_{(1)}}(\alpha)(E_{(1)}){}^2 \nabla_{D_{(2)}}(\alpha)(E_{(2)})
=({}^2 \nabla_{D_{(1)}}(\alpha){}^2 \nabla_{D_{(2)}}(\alpha))(E)\qed
\end{multline*}
\def\qed{}\end{proof}
We define the coproduct on ${\mathrm J}_RL$ through the following
formula
\[
\epsilon({}^1\nabla_D{}^2\nabla_E(\alpha))=\sum_\alpha\langle \alpha_{(1)},D\rangle_1
\langle \alpha_{(2)},E\rangle_2
\]
for all $D,E\in \mathrm U_RL$, $\alpha\in \mathrm J_RL$. The non-degeneracy of the
pairings $\langle-,-\rangle_i$, $i=1,2$ (see Lemma
\ref{ref-A.8-57}), implies that this formula yields indeed
a well-defined element $\sum_\alpha \alpha_{(1)}\ctimes \alpha_{(2)}\in
{\mathrm J}_RL\ctimes_R {\mathrm J}_RL$.
\begin{remark} \label{ref-A.10-59}
Keeping the previous notation, let us recall the simpler expression of \cite{KP} for the
coproduct:
\begin{equation}\label{coprod-KP}
\alpha(DE)=\sum_\alpha\alpha_{(1)}(D\alpha_{(2)}(E))\,.
\end{equation}
Without going into the details (for which we refer to \cite{KP} an references therein), let us also mention that in \cite{KP} the authors consider
a so-called ``translation map" $D\mapsto\sum_DD_+\otimes D_-$
which simplifies considerably the formula for the Grothendieck connection, i.e.\
${}^1\nabla_D(\alpha)(E)=\sum_DD_+(\alpha(D_-E))$.
Using this, our definition for the coproduct reads:
\begin{equation}\label{coprod-ours}
\sum_DD_+(\alpha(D_- E))=\sum_D\sum_\alpha D_+(\alpha_{(1)}(D_-))\alpha_{(2)}(E)\,.
\end{equation}
We now prove that the two definitions actually coincide.
Suppose that \eqref{coprod-KP} is satisfied, then
\begin{eqnarray*}
\sum_DD_+(\alpha(D_- E)) & = & \sum_D\sum_\alpha D_+(\alpha_{(1)}(D_-
\alpha_{(2)}(E)))~=~\sum_\alpha{}^1\nabla_D(\alpha_{(1)})(\alpha_{(2)}(E)) \\
& = & \sum_\alpha\alpha_{(2)}(E){}^1\nabla_D(\alpha_{(1)})(1)~=~\sum_D\sum_\alpha \alpha_{(2)}(E)D_+(\alpha_{(1)}(D_-)
\end{eqnarray*}
Therefore \eqref{coprod-ours} is also satisfied.
\end{remark}
\begin{lemma} The coproduct is an algebra morphism and a morphism of
$R$-$R$-bimodules.
\end{lemma}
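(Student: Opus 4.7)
The plan is to reduce both statements to verifiable identities in $\mathrm U_RL$ via the non-degeneracy of the two pairings $\langle-,-\rangle_1$ and $\langle-,-\rangle_2$ (Lemma \ref{ref-A.8-57}). By definition, the element $\Delta(\alpha) = \sum_\alpha \alpha_{(1)}\ctimes \alpha_{(2)}$ is uniquely characterized by
\[
\sum_\alpha \langle \alpha_{(1)},D\rangle_1 \langle\alpha_{(2)},E\rangle_2 = \epsilon({}^1\nabla_D{}^2\nabla_E(\alpha))
\]
for all $D,E\in \mathrm U_RL$. So to prove $\Delta(\xi)=\eta$ it suffices to match these ``matrix coefficients'' on $D\otimes E$. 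A key preliminary observation is that the pairings themselves satisfy
\[
\langle\gamma\delta,D\rangle_i \;=\; \sum_D \langle\gamma,D_{(1)}\rangle_i\langle\delta,D_{(2)}\rangle_i,
\]
which follows at once from $\langle-,-\rangle_i=\epsilon\circ{}^i\nabla_{(-)}(-)$, the Leibniz rule of Lemma \ref{ref-A.9-58}, and the fact that $\epsilon$ is an algebra morphism (since $(\alpha\beta)(1)=\alpha(1)\beta(1)$ by \eqref{ref-3.6-8}).

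For the algebra morphism property $\Delta(\alpha\beta)=\Delta(\alpha)\Delta(\beta)$, I pair both sides with $D\otimes E$. On the left, I apply Lemma \ref{ref-A.9-58} twice, first with $i=2$ and then with $i=1$ (using that ${}^1\nabla$ and ${}^2\nabla$ commute), to obtain
\[
\epsilon\bigl({}^1\nabla_D{}^2\nabla_E(\alpha\beta)\bigr)
= \sum_{D,E}\epsilon\bigl({}^1\nabla_{D_{(1)}}{}^2\nabla_{E_{(1)}}(\alpha)\bigr)\epsilon\bigl({}^1\nabla_{D_{(2)}}{}^2\nabla_{E_{(2)}}(\beta)\bigr).
\]
On the right, pairing $\sum \alpha_{(1)}\beta_{(1)}\ctimes \alpha_{(2)}\beta_{(2)}$ with $D\otimes E$ and using the preliminary observation to split each factor through the coproduct of $D$ and $E$ produces exactly the same sum. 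Non-degeneracy finishes the identity. The unit condition $\Delta(1)=1\ctimes 1$ follows by a short computation using Lemma \ref{ref-A.5-50} (giving ${}^2\nabla_E(1)=1\!\!1_2(\epsilon(E))$), commutativity of ${}^1\nabla$ and ${}^2\nabla$ through ${}^2\nabla_r$ (which is multiplication by $1\!\!1_2(r)$, Lemma \ref{ref-A.4-49}), and $\epsilon\circ 1\!\!1_i=\Id_R$.

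For the bimodule property, consider first the left action. Using Lemma \ref{ref-A.4-49}, we have $1\!\!1_1(r)\alpha={}^1\nabla_r\alpha$, so
\[
\epsilon\bigl({}^1\nabla_D{}^2\nabla_E(1\!\!1_1(r)\alpha)\bigr)
= \epsilon\bigl({}^1\nabla_{Dr}{}^2\nabla_E(\alpha)\bigr)
= \sum_\alpha \langle \alpha_{(1)},Dr\rangle_1\langle \alpha_{(2)},E\rangle_2.
\]
By \eqref{ref-A.2-53} of Lemma \ref{ref-A.6-51}, the first pairing equals $\langle 1\!\!1_1(r)\alpha_{(1)},D\rangle_1$, which is exactly the pairing of $(1\!\!1_1(r)\ctimes 1)\Delta(\alpha)$ with $D\otimes E$. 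The right action is handled analogously: $\alpha\,1\!\!1_2(r)=1\!\!1_2(r)\alpha={}^2\nabla_r\alpha$ (using commutativity of $\mathrm J_RL$), so ${}^2\nabla_E(\alpha\,1\!\!1_2(r))={}^2\nabla_{Er}(\alpha)$, and \eqref{ref-A.2-53} again transfers the $r$ onto the second tensor factor. The commutativity of $\mathrm J_RL$ also ensures that the balancing over $R$ (which defines $\ctimes_R$) is consistent.

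The main obstacle is purely bookkeeping: keeping track of which side of $\mathrm U_RL$ absorbs the $R$-scalar under each pairing and that the balancing relation $\sum\gamma\,1\!\!1_2(r)\ctimes\delta = \sum\gamma\ctimes 1\!\!1_1(r)\delta$ in $\mathrm J_RL\ctimes_R\mathrm J_RL$ is respected by the defining formula for $\Delta$. This last point boils down to verifying that the bilinear map $(D,E)\mapsto\epsilon({}^1\nabla_D{}^2\nabla_E(\alpha))$ on $\mathrm U_RL\otimes_k \mathrm U_RL$ descends through the correct $R$-balancing, which is again a direct consequence of Lemma \ref{ref-A.6-51} combined with Lemmas \ref{ref-A.3-48} and \ref{ref-A.8-57} applied to the completed tensor product.
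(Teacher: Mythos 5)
Your proposal is correct and follows essentially the same route as the paper: both characterize $\Delta(\alpha)$ by its matrix coefficients $\epsilon({}^1\nabla_D{}^2\nabla_E(\alpha))$ against the non-degenerate pairings, prove multiplicativity by applying the Leibniz rule of Lemma \ref{ref-A.9-58} in both variables and regrouping the four resulting pairings (your ``preliminary observation'' is exactly the regrouping step the paper uses implicitly), and deduce the bimodule property from the linearity identities of Lemma \ref{ref-A.6-51}. The only difference is one of explicitness: you spell out the bimodule verification and the balancing over $R$, which the paper dismisses as an easy consequence.
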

\begin{proof}  The fact that the coproduct is a morphism of $R$-$R$-bimodules is an easy
consequence of the linearity
properties of $\langle-,-\rangle_{1,2}$ (see Lemma \ref{ref-A.6-51}).

We check that $\Delta(1)=1\otimes 1$. This means
\[
\epsilon({}^1\nabla_D{}^2\nabla_E(1))=\langle D,1\rangle_1\langle E,1\rangle_2
=\epsilon(D)\epsilon(E)
\]
(for the last equality we use \eqref{ref-A.4-56}).
We compute
\begin{align*}
\epsilon({}^1\nabla_D{}^2\nabla_E(1))&=\epsilon({}^1\nabla_D(1\!\!1_2
(\epsilon(E))))\qquad \text{(Lemma \ref{ref-A.5-50})}\\
&=\langle 1\!\!1_2
(\epsilon(E)),D \rangle_1\\
&=\epsilon(E) \langle 1,D \rangle_1 \qquad\text{\eqref{ref-A.1-52}}\\
&=\epsilon(E)\epsilon(D)\qquad \eqref{ref-A.4-56}
\end{align*}

We now prove that the coproduct is compatible with multiplication.
We compute
\begin{align*}
\sum_{\alpha\beta}\langle (\alpha\beta)_{(1)},D\rangle_1
\langle E,(\alpha\beta)_{(2)}\rangle_2
&=\epsilon({}^1\nabla_D{}^2\nabla_E(\alpha\beta))\\
&=\sum_{D,E}\epsilon({}^1\nabla_{D_{(1)}}{}^2\nabla_{E_{(1)}}(\alpha))\epsilon({}^1\nabla_{D_{(2)}}{}^2\nabla_{E_{(2)}}(\beta))\\
&=\sum_{D,E,\alpha,\beta} \langle\alpha_{(1)}, D_{(1)}\rangle_1\langle\alpha_{(2)}
,E_{(1)}\rangle_2\langle\beta_{(1)}, D_{(2)}\rangle_1\langle\beta_{(2)}
,E_{(2)}\rangle_2\\
&=\sum_{\alpha,\beta}\langle \alpha_{(1)}\beta_{(1)} ,D\rangle_1
\langle\alpha_{(2)}\beta_{(2)}, E\rangle_2\qed
\end{align*}
\def\qed{}\end{proof}
\begin{lemma} \label{ref-A.12-60} One has the following formul\ae\
\begin{align*}
{}^1\nabla_D\alpha&=1\!\!1_1(\langle \alpha_{(1)},D\rangle_1)\alpha_{(2)}\\
{}^2\nabla_D\alpha&=\alpha_{(1)}1\!\!1_2(\langle\alpha_{(2)} ,D\rangle_2)
\end{align*}
Hence in particular for $D=1$ we get the counit axioms.
\begin{align*}
\alpha&=(1\!\!1_1\circ \epsilon)(\alpha_{(1)})\alpha_{(2)}\\
\alpha&=\alpha_{(1)}(1\!\!1_2\circ \epsilon)(\alpha_{(2)})\\
\end{align*}
\end{lemma}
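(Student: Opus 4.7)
The strategy is to verify the two displayed identities by pairing both sides against arbitrary $E\in \mathrm{U}_R L$ using the non-degenerate pairings $\langle-,-\rangle_2$ and $\langle-,-\rangle_1$ respectively (Lemma \ref{ref-A.8-57}). The definition of the coproduct via $\epsilon({}^1\nabla_D{}^2\nabla_E\alpha)=\sum_\alpha\langle\alpha_{(1)},D\rangle_1\langle\alpha_{(2)},E\rangle_2$ is tailor-made for this: it expresses the two actions of $\mathrm U_R L$, after passing through the counit, in terms of Sweedler components. So the main task is to peel off one layer of pairing at a time and match the bookkeeping.

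For the first formula, I would compute
\[
\langle {}^1\nabla_D\alpha, E\rangle_2 = \epsilon({}^2\nabla_E{}^1\nabla_D\alpha)
=\epsilon({}^1\nabla_D{}^2\nabla_E\alpha)
=\sum_\alpha\langle\alpha_{(1)},D\rangle_1\langle\alpha_{(2)},E\rangle_2,
\]
using that ${}^1\nabla$ and ${}^2\nabla$ commute (recalled right before \eqref{ref-3.10-13}) together with the very definition of $\Delta$. On the other hand, writing $r_\alpha=\langle\alpha_{(1)},D\rangle_1\in R$, I would use the $i=2$ case of \eqref{ref-A.1-52} from Lemma \ref{ref-A.6-51} (i.e.\ $\langle\alpha\cdot 1\!\!1_1(r),E\rangle_2 = r\langle\alpha,E\rangle_2$, noting $\bar\imath=1$ for $i=2$, and the commutativity of $\mathrm J_RL$) to obtain
\[
\langle 1\!\!1_1(r_\alpha)\alpha_{(2)},E\rangle_2 = r_\alpha\langle\alpha_{(2)},E\rangle_2
=\langle\alpha_{(1)},D\rangle_1\langle\alpha_{(2)},E\rangle_2.
\]
Matching the two computations and invoking non-degeneracy of $\langle-,-\rangle_2$ gives the first identity. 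The second identity is proved in a perfectly symmetric manner: pair both sides against arbitrary $E\in \mathrm{U}_R L$ via $\langle-,-\rangle_1$, use commutation of ${}^1\nabla,{}^2\nabla$ in the opposite order, then apply the $i=1$ case of \eqref{ref-A.1-52} to move $1\!\!1_2(\langle\alpha_{(2)},D\rangle_2)$ past the pairing.

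The counit axioms are then obtained by specializing $D=1$ in the two formulas: since $\langle\alpha_{(1)},1\rangle_1 =\epsilon(\alpha_{(1)})$ and $\langle\alpha_{(2)},1\rangle_2=\epsilon(\alpha_{(2)})$ by \eqref{ref-A.4-56}, and ${}^i\nabla_1$ is the identity action (as $1$ is the unit of $\mathrm U_R L$), we get precisely $\alpha=(1\!\!1_1\circ\epsilon)(\alpha_{(1)})\alpha_{(2)}$ and $\alpha=\alpha_{(1)}(1\!\!1_2\circ\epsilon)(\alpha_{(2)})$.

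The only real subtlety — and the step I would double-check most carefully — is the bookkeeping of which $1\!\!1_i$ corresponds to which $R$-linearity property of which pairing. Concretely, \eqref{ref-A.1-52} involves both $1\!\!1_i$ and $1\!\!1_{\bar\imath}$, and in the RHS of each formula the factor $1\!\!1_i$ that appears matches the pairing $\langle-,-\rangle_{\bar\imath}$ one should pair against (so that the corresponding linearity extracts the scalar). Checking this is mechanical but is where an index slip is most likely, so I would set it out explicitly once in each case before moving on.
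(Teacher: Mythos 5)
Your proposal is correct and is essentially identical to the paper's own proof: the authors likewise verify the first formula by applying $\langle-,E\rangle_2$ to both sides, using the $R$-linearity of the pairing from Lemma \ref{ref-A.6-51} to extract the scalar $\langle\alpha_{(1)},D\rangle_1$, the defining formula for $\Delta$, and the commutativity of ${}^1\nabla$ and ${}^2\nabla$, then conclude by non-degeneracy; the second formula and the $D=1$ specialization are handled exactly as you describe. Your index bookkeeping (pairing against $\langle-,-\rangle_2$ when the coefficient sits in $1\!\!1_1$, and vice versa) matches the paper's.
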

\begin{proof} To prove for example the first formula we show that
both sides give the same results when applying $\langle-,E\rangle_2$.
We compute
\begin{align*}
\langle 1\!\!1_1(\langle\alpha_{(1)},D\rangle_1)\alpha_{(2)},E\rangle_2
&=\langle \alpha_{(1)},D\rangle_1\langle\alpha_{(2)},E\rangle_2\\
&=\epsilon ({}^1\nabla_D {}^2\nabla_E \alpha)\\
&=\epsilon( {}^2\nabla_E{}^1\nabla_D  \alpha)\\
&=\langle {}^1\nabla_D  \alpha,E\rangle_2
\end{align*}
The second formula is proved in the same way.
\end{proof}
\begin{lemma} The coproduct on ${\mathrm J}_R L$ is coassociative.
\end{lemma}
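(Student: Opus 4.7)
The plan is to verify coassociativity by pulling it back to a scalar identity through the non-degeneracy of the pairings (Lemma \ref{ref-A.8-57}). As a preliminary step I would establish the two equivariance relations
\[
\Delta({}^1\nabla_D\alpha)=({}^1\nabla_D\otimes\mathrm{id})\Delta(\alpha),\qquad \Delta({}^2\nabla_E\alpha)=(\mathrm{id}\otimes{}^2\nabla_E)\Delta(\alpha),
\]
each of which follows by pairing both sides against $(X,Y)\in\mathrm U_RL\otimes\mathrm U_RL$ via $(\langle-,-\rangle_1,\langle-,-\rangle_2)$, unpacking the definition of $\Delta$, and invoking the left-action property ${}^i\nabla_D{}^i\nabla_{D'}={}^i\nabla_{DD'}$.

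Given these equivariances, I would iterate them to compute both $(\Delta\otimes\mathrm{id})\Delta(\alpha)$ and $(\mathrm{id}\otimes\Delta)\Delta(\alpha)$ evaluated against triples $(D_1,D_2,D_3)\in\mathrm U_RL^{\otimes 3}$, and show that each reduces to a common iterated scalar expression in $\epsilon({}^1\nabla_{D_i}{}^2\nabla_{D_j}\alpha)$, appealing to Lemma \ref{ref-A.12-60} to express each intermediate $\Delta$-factor through its defining pairings. The two left-action identities ${}^1\nabla_{D_1D_2}={}^1\nabla_{D_1}{}^1\nabla_{D_2}$ and ${}^2\nabla_{D_2D_3}={}^2\nabla_{D_2}{}^2\nabla_{D_3}$, together with the commutativity of ${}^1\nabla$ with ${}^2\nabla$, ensure that the two iterated expressions coincide; non-degeneracy of the pairings then lifts this scalar identity to an equality in $\mathrm{J}_RL\ctimes_R\mathrm{J}_RL\ctimes_R\mathrm{J}_RL$.

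The main obstacle is that the two sides of coassociativity naturally pair against $(D_1,D_2,D_3)$ via different index-combinations on the middle slot (``$(1,2,2)$'' for $(\Delta\otimes\mathrm{id})\Delta$ versus ``$(1,1,2)$'' for $(\mathrm{id}\otimes\Delta)\Delta$). Bridging this asymmetry requires careful use of the $1\!\!1_2$-linearity of ${}^1\nabla$ and the $1\!\!1_1$-linearity of ${}^2\nabla$ to shuffle scalar pairing-values into the $\mathrm U_RL$-arguments via Lemma \ref{ref-A.6-51}. A cleaner alternative would be to invoke the Kapranov--Polishchuk form $\alpha(DE)=\sum_\alpha\alpha_{(1)}(D\alpha_{(2)}(E))$ from Remark \ref{ref-A.10-59}: iterating this formula to expand $\alpha(DEF)$ in the two bracketings $(DE)F$ and $D(EF)$ yields precisely the two sides of coassociativity, and their equality is then an immediate consequence of associativity of multiplication in $\mathrm U_RL$.
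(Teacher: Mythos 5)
Your ingredients are the right ones---commutativity of ${}^1\nabla$ and ${}^2\nabla$, the formul\ae\ of Lemma \ref{ref-A.12-60}, and non-degeneracy of the pairings---and the paper's proof is built from exactly these. But your main execution plan diverges from the paper's at the very point you flag as the ``main obstacle,'' and that obstacle is not cosmetic. The paper does \emph{not} evaluate against triples $(D_1,D_2,D_3)$: it expands the single identity ${}^2\nabla_E{}^1\nabla_D(\alpha)={}^1\nabla_D{}^2\nabla_E(\alpha)$ twice using Lemma \ref{ref-A.12-60} and obtains
\[
\sum_\alpha 1\!\!1_1(\langle\alpha_{(1)},D\rangle_1)\,\alpha_{(2)(1)}\,1\!\!1_2(\langle\alpha_{(2)(2)},E\rangle_2)
=\sum_\alpha 1\!\!1_1(\langle\alpha_{(1)(1)},D\rangle_1)\,\alpha_{(1)(2)}\,1\!\!1_2(\langle\alpha_{(2)},E\rangle_2)
\]
as an identity in ${\mathrm J}_RL$ for all $D,E$. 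The middle tensor factor is left \emph{unpaired}, so only the two outer slots carry pairings, and they carry the same pairings ($\langle-,-\rangle_1$ on the first, $\langle-,-\rangle_2$ on the third) on both sides; non-degeneracy, applied after passing to associated graded objects via Lemma \ref{ref-A.3-48}, then yields coassociativity. This sidesteps the ``$(1,2,2)$ versus $(1,1,2)$'' mismatch entirely. In your version the mismatch is a genuine gap: if the two sides are evaluated with \emph{different} pairings on the middle slot, then even a proof that the resulting scalars agree does not feed into non-degeneracy, since you are comparing values of two different functionals on two a priori different tensors; and if you insist on a common pairing on the middle slot, one of the two expansions no longer drops out of the definition of $\Delta$, and the ``shuffling'' via Lemma \ref{ref-A.6-51} that you gesture at is precisely the computation you would still have to supply. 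Either finish that computation or, more simply, evaluate only the outer slots.

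Your alternative via the Kapranov--Posthuma formula $\alpha(DE)=\sum_\alpha\alpha_{(1)}(D\alpha_{(2)}(E))$ is genuinely different from the paper's route and is attractive: the two bracketings of $\alpha(DEF)$ give $\sum_\alpha\alpha_{(1)(1)}(D\,\alpha_{(1)(2)}(E\,\alpha_{(2)}(F)))=\sum_\alpha\alpha_{(1)}(D\,\alpha_{(2)(1)}(E\,\alpha_{(2)(2)}(F)))$ by associativity in $\mathrm U_RL$. But this is again only a scalar identity, so you still owe a non-degeneracy argument of the same flavour to upgrade it to an equality in $({\mathrm J}_RL)^{\ctimes_R 3}$, and you should first check that the coproduct of the paper actually satisfies the Kapranov--Posthuma formula (Remark \ref{ref-A.10-59} as written verifies only one implication), so that you are not assuming what you set out to prove.
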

\begin{proof}
We compute the
two sides of
\[
{}^2\nabla_E{}^1\nabla_D(\alpha)={}^1\nabla_D{}^2\nabla_E(\alpha)
\]
using the formul\ae\ from Lemma \ref{ref-A.12-60}. For the left hand side we find
\begin{align*}
\sum_\alpha {}^2\nabla_E{}^1\nabla_D(\alpha)&=1\!\!1_1(\langle\alpha_{(1)},D\rangle_1)
{}^2\nabla_E(\alpha_{(2)})\\
&=\sum_\alpha 1\!\!1_1(\langle\alpha_{(1)} ,D\rangle_1)
1\!\!1_2(\langle \alpha_{(2)(2)},E\rangle_2)\alpha_{(2)(1)}
\end{align*}
For the right hand side we find
\begin{align*}
\sum_\alpha {}^1\nabla_D{}^2\nabla_E(\alpha)&=1\!\!1_2(\langle\alpha_{(2)},E\rangle_2)
{}^1\nabla_D(\alpha_{(1)})\\
&=\sum_\alpha 1\!\!1_2(\langle\alpha_{(2)},E\rangle_2)
1\!\!1_1(\langle\alpha_{(1)(1)},D\rangle_1)\alpha_{(1)(2)}
\end{align*}
so that we get
\[
\sum_\alpha 1\!\!1_1(\langle D,\alpha_{(1)}\rangle_1)\alpha_{(2)(1)}1\!\!1_2(\langle
E,\alpha_{(2)(2)}\rangle_2)
=\sum_\alpha 1\!\!1_1(\langle D,\alpha_{(1)(1)}\rangle_1)\alpha_{(1)(2)}1\!\!1_2(\langle
E,\alpha_{(2)}\rangle_2)
\]
Since this is true for any $D,E$ we deduce by passing to associated
graded objects and invoking Lemma \ref{ref-A.3-48}
\[
\sum_\alpha\alpha_{(1)}\otimes \alpha_{(2)(1)}\otimes \alpha_{(2)(2)}
=\sum_\alpha \alpha_{(1)(1)}\otimes \alpha_{(1)(2)}\otimes \alpha_{(2)}
\]
which is precisely coassociativity.
\end{proof}
The antipode is defined using a similar formula as for the coproduct
\[
\langle S\alpha, D\rangle_1=\langle \alpha,D\rangle_2
\]
Once again the non-degeneracy of the pairings $\langle-,-\rangle_{1,2}$
implies that we obtain an
invertible map $S:{\mathrm J}_RL\r {\mathrm J}_RL$.
\begin{lemma}
$S$ is an algebra morphism which furthermore exchanges the actions
of $R$ on $\mathrm{J}_R L$ through $1\!\!1_1$ and $1\!\!1_2$.
\end{lemma}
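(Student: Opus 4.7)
The plan is to exploit the non-degeneracy of the pairing $\langle-,-\rangle_1$ (Lemma \ref{ref-A.8-57}) to reduce every assertion about $S$ to a computation with pairings, where the tools of Lemmas \ref{ref-A.6-51}, \ref{ref-A.7-54} and \ref{ref-A.9-58} become directly available.

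\medskip

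\textbf{Exchange of the $R$-actions.} Given $r \in R$ and $\alpha \in \mathrm{J}_R L$, I would compute $\langle S(1\!\!1_1(r)\alpha), D\rangle_1$ for an arbitrary $D \in \mathrm{U}_R L$. Unfolding the definition of $S$ gives $\langle 1\!\!1_1(r)\alpha, D\rangle_2$, and by \eqref{ref-A.1-52} applied with $i = 2$ (so $\bar{\imath} = 1$) and using that $\mathrm{J}_R L$ is commutative, this equals $r\langle \alpha, D\rangle_2$. On the other hand, the same equation applied with $i = 1$ gives $\langle 1\!\!1_2(r) S(\alpha), D\rangle_1 = r\langle S(\alpha), D\rangle_1 = r\langle \alpha, D\rangle_2$. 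Non-degeneracy of $\langle-,-\rangle_1$ then forces $S(1\!\!1_1(r)\alpha) = 1\!\!1_2(r)S(\alpha)$, and the symmetric identity follows in the same way.

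\medskip

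\textbf{Unit.} For $S(1) = 1$, one tests against $D$: $\langle S(1), D\rangle_1 = \langle 1, D\rangle_2 = \epsilon(D) = \langle 1, D\rangle_1$ by \eqref{ref-A.3-55}, and non-degeneracy concludes.

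\medskip

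\textbf{Multiplicativity.} This is the only step that is not a one-line unfolding. For $\alpha, \beta \in \mathrm{J}_R L$ and $D \in \mathrm{U}_R L$, I would compute both sides of the desired identity $S(\alpha\beta) = S(\alpha)S(\beta)$ after pairing with $D$ via $\langle-,-\rangle_1$. Using the definition of $S$ and Lemma \ref{ref-A.9-58} for $i = 2$, together with the fact that $\epsilon$ is an algebra morphism on $\mathrm{J}_R L$ (which is a consequence of $\Delta(1) = 1 \otimes 1$ in $\mathrm{U}_R L$), I get
\[
\langle S(\alpha\beta), D\rangle_1 = \epsilon\bigl({}^2\nabla_D(\alpha\beta)\bigr) = \sum_D \langle \alpha, D_{(1)}\rangle_2 \langle \beta, D_{(2)}\rangle_2.
\]
Applying the same lemma with $i = 1$ to $S(\alpha)S(\beta)$ and using the definition of $S$ once more yields
\[
\langle S(\alpha)S(\beta), D\rangle_1 = \sum_D \langle S(\alpha), D_{(1)}\rangle_1 \langle S(\beta), D_{(2)}\rangle_1 = \sum_D \langle \alpha, D_{(1)}\rangle_2 \langle \beta, D_{(2)}\rangle_2.
\]
The two expressions agree, and non-degeneracy of $\langle-,-\rangle_1$ finishes the proof. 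The only step that could cause trouble is keeping track of the correct indices $i$ and $\bar{\imath}$ in Lemma \ref{ref-A.6-51}, but the commutativity of $\mathrm{J}_R L$ and the symmetry of the setup make this essentially bookkeeping.
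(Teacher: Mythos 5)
Your proof is correct and follows essentially the same route the paper sketches: the exchange of $R$-actions is deduced from the linearity properties of the pairings (Lemma \ref{ref-A.6-51}) plus non-degeneracy (Lemma \ref{ref-A.8-57}), and multiplicativity is obtained by the same computation the paper uses for the comultiplication, namely Lemma \ref{ref-A.9-58} combined with the multiplicativity of $\epsilon$. One small bookkeeping point: the ``symmetric identity'' $S(1\!\!1_2(r)\alpha)=1\!\!1_1(r)S(\alpha)$ is obtained from the other linearity formula \eqref{ref-A.2-53} (pairing against $Dr$) rather than from \eqref{ref-A.1-52}, but this is exactly the kind of index-chasing you anticipated and does not affect the argument.
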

\begin{proof} The fact that $S$ exchanges the two $R$-actions follows
from the linearity property of the  pairings $\langle-,-\rangle_{1,2}$
(see Lemma \ref{ref-A.6-51}).

The fact that $S$ in an algebra morphism follows in a similar
way a for the comultiplication.
\end{proof}
To verify the properties of the antipode we need the following formula.
\begin{lemma} \label{ref-A.15-61} One has for $D\in \mathrm U_RL$, $\alpha\in {\mathrm
J}_RL$
\begin{equation}
\label{ref-A.5-62}
\sum_{D,\alpha}
\langle\alpha_{(1)}, D_{(1)}\rangle_1\langle\alpha_{(2)}, D_{(2)}\rangle_2
=D(\epsilon(\alpha))
\end{equation}
\end{lemma}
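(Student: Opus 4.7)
The plan is to rewrite the left‐hand side of \eqref{ref-A.5-62} using the defining formula of the coproduct: by the very definition of $\Delta$, it equals
\[
\epsilon\Bigl(\sum_D{}^1\nabla_{D_{(1)}}{}^2\nabla_{D_{(2)}}(\alpha)\Bigr).
\]
Writing $T_D(\alpha):=\sum_D{}^1\nabla_{D_{(1)}}{}^2\nabla_{D_{(2)}}(\alpha)$, the identity to be proved becomes
\[
\epsilon(T_D\alpha)=D(\epsilon(\alpha))\qquad\text{for all }D\in\mathrm U_R L,\ \alpha\in\mathrm J_RL.
\]

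First I would show that $D\mapsto T_D$ is a $k$-algebra morphism $\mathrm U_R L\to\operatorname{End}_k(\mathrm J_R L)$. Using the multiplicativity of $\Delta$ on $\mathrm U_R L$ (third line of \eqref{ref-3.3-4}) together with the fact that ${}^1\nabla$ and ${}^2\nabla$ are commuting left $\mathrm U_R L$-actions on $\mathrm J_R L$ (see \S\ref{ref-3.4-12}), one rearranges
\[
T_{DE}\alpha=\sum_{D,E}{}^1\nabla_{D_{(1)}E_{(1)}}{}^2\nabla_{D_{(2)}E_{(2)}}\alpha
=\sum_{D,E}{}^1\nabla_{D_{(1)}}{}^2\nabla_{D_{(2)}}{}^1\nabla_{E_{(1)}}{}^2\nabla_{E_{(2)}}\alpha=T_D(T_E\alpha),
\]
and obviously $T_1=\mathrm{id}$.

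Next I would verify the target identity on the algebra generators $R$ and $L$ of $\mathrm U_R L$. For $r\in R$ we have $\Delta(r)=r\otimes 1$, so $T_r\alpha={}^1\nabla_r\alpha=1\!\!1_1(r)\alpha$ by Lemma~\ref{ref-A.4-49}; applying $\epsilon$ and using $\epsilon\circ 1\!\!1_1=\mathrm{id}_R$ yields $\epsilon(T_r\alpha)=r\epsilon(\alpha)=r(\epsilon(\alpha))$. For $l\in L$ we have $\Delta(l)=l\otimes 1+1\otimes l$, so $T_l\alpha={}^1\nabla_l\alpha+{}^2\nabla_l\alpha$; evaluating at $1\in\mathrm U_R L$ via the explicit formul\ae\ of \S\ref{ref-3.4-12} gives $l(\alpha(1))-\alpha(l)$ and $\alpha(l)$ respectively, whose sum is $l(\epsilon(\alpha))$.

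Finally, since $\mathrm U_R L$ is generated as a $k$-algebra by $R$ and $L$, an induction on word length closes the argument: assuming the identity holds for $D$ and $E$, the multiplicativity $T_{DE}=T_D\circ T_E$ yields $\epsilon(T_{DE}\alpha)=\epsilon(T_D(T_E\alpha))=D(\epsilon(T_E\alpha))=D(E(\epsilon(\alpha)))=(DE)(\epsilon(\alpha))$. There is no substantive obstacle here; the only point requiring care is the bookkeeping with Sweedler notation and the correct order of composition in the inductive step, so that $T_{DE}$ really equals $T_D\circ T_E$ and not the reverse.
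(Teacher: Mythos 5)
Your proposal is correct and follows essentially the same route as the paper: reduce to multiplicativity in $D$ (using that $\Delta$ is multiplicative and that ${}^1\nabla$ and ${}^2\nabla$ commute) and then check the identity on the generators $r\in R$ and $l\in L$, which the paper leaves as ``easy verifications'' and you carry out explicitly and correctly. The only cosmetic difference is that you package the induction step as the statement $T_{DE}=T_D\circ T_E$ before inducting, whereas the paper performs the same rearrangement inside the inductive computation.
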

\begin{proof}  We first observe that by definition
\[
\langle \alpha_{(1)},D_{(1)}\rangle_1\langle\alpha_{(2)} ,D_{(2)}\rangle_2
=\epsilon({}^1\nabla_{D_{(1)}}{}^2\nabla_{D_{(2)}}(\alpha))
\]
We first claim that that \eqref{ref-A.5-62} is multiplicative in
in $D$.
Assume that \eqref{ref-A.5-62} is correct for $D,E\in \mathrm U_R L$. The we
claim it is also correct for $DE$.
\begin{align*}
\epsilon({}^1\nabla_{(DE)_{(1)}}{}^2\nabla_{(DE)_{(2)}}(\alpha))
&=
\epsilon({}^1\nabla_{(D_{(1)}E_{(1)}}{}^2\nabla_{D_{(2)}E_{(2)}}(\alpha))\\
&=\epsilon({}^1\nabla_{(D_{(1)}}{}^1\nabla_{E_{(1)}}{}^2\nabla_{D_{(2)}}{}^2\nabla_{E_{(2)}}(\alpha))\\
&=\epsilon({}^1\nabla_{(D_{(1)}}{}^2\nabla_{D_{(2)}}{}^1\nabla_{E_{(1)}}{}^2\nabla_{E_{(2)}}(\alpha))\\
&=\epsilon(D(\epsilon({}^1\nabla_{E_{(1)}}{}^2\nabla_{E_{(2)}}(\alpha))))\qquad
\text{(induction)}\\
&=D(\epsilon({}^1\nabla_{E_{(1)}}{}^2\nabla_{E_{(2)}}(\alpha)))\\
&=DE(\epsilon(\alpha))\qquad \text{(induction)}
\end{align*}
Hence it suffices to look at the cases $D=r\in R$ and $D=l\in L$. These
are easy verifications.
\end{proof}
\begin{lemma}
We have
\begin{align}
\label{ref-A.6-63}
\sum_\alpha \alpha_{(1)} S(\alpha_{(2)})&=(1\!\!1_1\circ\epsilon)(\alpha)\\
\sum_\alpha S(\alpha_{(1)}) \alpha_{(2)}&=(1\!\!1_2\circ\epsilon)(\alpha)
\label{ref-A.7-64}
\end{align}
\end{lemma}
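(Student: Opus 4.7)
The plan is to verify each identity by pairing both sides with $\langle-, D\rangle_1$ for an arbitrary section $D$ of $\mathrm U_R L$, and then invoking the non-degeneracy of this pairing from Lemma \ref{ref-A.8-57}. The key algebraic tool is the multiplicativity identity $\langle \beta\gamma, D\rangle_1 = \sum_D \langle \beta, D_{(1)}\rangle_1 \langle \gamma, D_{(2)}\rangle_1$, which follows from Lemma \ref{ref-A.9-58} together with the fact that $\epsilon$ is an algebra morphism on $\mathrm J_R L$.

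For \eqref{ref-A.6-63}, applying this multiplicativity and the defining property $\langle S\beta, D\rangle_1 = \langle \beta, D\rangle_2$ of the antipode reduces the pairing of the left-hand side with $\langle-, D\rangle_1$ to the sum $\sum_{D,\alpha}\langle \alpha_{(1)}, D_{(1)}\rangle_1 \langle \alpha_{(2)}, D_{(2)}\rangle_2$, which equals $D(\epsilon(\alpha))$ by Lemma \ref{ref-A.15-61}. On the other hand, $\langle 1\!\!1_1(s), D\rangle_1 = D(s)$ (deduced from \eqref{ref-A.2-53} with $\alpha=1$ together with \eqref{ref-A.3-55} and the formula $\epsilon(Ds) = D(s)$), so the pairing of the right-hand side with $\langle-, D\rangle_1$ also gives $D(\epsilon(\alpha))$. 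Non-degeneracy of $\langle-,-\rangle_1$ concludes.

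For \eqref{ref-A.7-64}, the same procedure, together with the defining property of $S$ applied to the first factor, reduces the left-hand side paired with $\langle-, D\rangle_1$ to
\[
\sum_{D,\alpha}\langle \alpha_{(1)}, D_{(1)}\rangle_2 \langle \alpha_{(2)}, D_{(2)}\rangle_1,
\]
which is no longer directly covered by Lemma \ref{ref-A.15-61}. The main obstacle is to establish the ``dual'' identity that this sum equals $\epsilon(D)\epsilon(\alpha)$; this will match the pairing $\langle 1\!\!1_2(\epsilon(\alpha)), D\rangle_1 = \epsilon(\alpha)\epsilon(D)$ of the right-hand side, which comes from \eqref{ref-A.1-52} applied to $\alpha=1$ and \eqref{ref-A.3-55}.

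I would prove the dual identity by mimicking the inductive argument of Lemma \ref{ref-A.15-61}: the commutation of ${}^1\nabla$ and ${}^2\nabla$ together with $\Delta(DE)=\Delta(D)\Delta(E)$ makes the sum multiplicative in $D$, so it suffices to handle the base cases $D=r\in R$ and $D=l\in L$. The case $D=r$ is immediate from \eqref{ref-A.1-52}; the case $D=l$ is the delicate one and uses both counit axioms of Lemma \ref{ref-A.12-60}. Concretely, by expanding $\alpha = \alpha_{(1)} 1\!\!1_2(\epsilon(\alpha_{(2)})) = 1\!\!1_1(\epsilon(\alpha_{(1)}))\alpha_{(2)}$ and pairing with $\langle -, l\rangle_2$ and $\langle -, l\rangle_1$ respectively, one identifies $\alpha_{(1)}(l)\epsilon(\alpha_{(2)}) + \epsilon(\alpha_{(1)}) l(\epsilon(\alpha_{(2)})) = \alpha(l)$ and $\epsilon(\alpha_{(1)})\alpha_{(2)}(l) = \alpha(l)$, and the two contributions to the dual sum cancel to yield $0 = \epsilon(l)\epsilon(\alpha)$, as required.
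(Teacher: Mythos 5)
Your treatment of \eqref{ref-A.6-63} is correct and is essentially the paper's own proof: pair with $\langle-,D\rangle_1$, use the multiplicativity of that pairing (Lemma \ref{ref-A.9-58} plus the fact that $\epsilon$ is an algebra map), the defining relation $\langle S\beta,D\rangle_1=\langle\beta,D\rangle_2$, and Lemma \ref{ref-A.15-61}. For \eqref{ref-A.7-64} you have also correctly located the real difficulty, which the paper dismisses with ``similar, using cocommutativity'': the reduction leaves you with $\sum_{D,\alpha}\langle\alpha_{(1)},D_{(1)}\rangle_2\langle\alpha_{(2)},D_{(2)}\rangle_1$, which is \emph{not} an instance of Lemma \ref{ref-A.15-61} (it must equal $\epsilon(D)\epsilon(\alpha)$ rather than $D(\epsilon(\alpha))$), and your verification of the base cases $D=r$ and $D=l$ of this dual identity is correct.

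The gap is in your inductive step. The multiplicativity in Lemma \ref{ref-A.15-61} works because, by the very definition of the coproduct, the sum there equals $\epsilon({}^1\nabla_{D_{(1)}}{}^2\nabla_{D_{(2)}}\alpha)$, i.e.\ it is $\epsilon$ applied to the operator $\Theta_D=\sum_D{}^1\nabla_{D_{(1)}}{}^2\nabla_{D_{(2)}}$ acting on $\alpha$; commutation of the two connections then gives $\Theta_{DE}=\Theta_D\Theta_E$ and the induction runs. Your dual sum attaches $\langle-,-\rangle_2$ to the \emph{first} leg of $\Delta\alpha$ and $\langle-,-\rangle_1$ to the \emph{second}, and no composition of ${}^1\nabla$ and ${}^2\nabla$ produces that pattern (by Lemma \ref{ref-A.12-60} these connections extract $\langle-,-\rangle_1$ from the first leg and $\langle-,-\rangle_2$ from the second). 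So ``commutation of ${}^1\nabla$ and ${}^2\nabla$ plus $\Delta(DE)=\Delta(D)\Delta(E)$'' does not by itself make the sum multiplicative in $D$. A symptom that something more is needed: the target value $\epsilon(D)\epsilon(\alpha)$ is itself not multiplicative in $D$, since on $\mathrm U_RL$ one has $\epsilon(DE)=D(\epsilon(E))\neq\epsilon(D)\epsilon(E)$ (take $D=l$, $E=r$), so the naive recursion cannot close. To complete the argument you need an extra input, for instance that $\Delta(\alpha)$ is annihilated by the ``middle derivation'', i.e.\ $\sum_\alpha\bigl({}^2\nabla_l\alpha_{(1)}\otimes\alpha_{(2)}+\alpha_{(1)}\otimes{}^1\nabla_l\alpha_{(2)}\bigr)=0$ for all $l\in L$; writing any $D$ with $\epsilon(D)=0$ as a sum of products $El$ with $l\in L$ then reduces the dual identity to your base case $D\in R$. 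But that invariance statement is of the same order of difficulty as the dual identity itself and must be proved (again by pairing against the non-degenerate pairings and passing to associated graded objects); it is the genuinely missing ingredient in your proposal.
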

\begin{proof} For \eqref{ref-A.6-63} we compute
\begin{align*}
\sum_{\alpha}\langle\alpha_{(1)} S(\alpha_{(2)}), D\rangle_1&=
\sum_{\alpha,D}\langle \alpha_{(1)},D_{(1)}\rangle_1
\langle S(\alpha_{(2)}) ,D_{(2)}\rangle_1\\
&=\sum_{\alpha,D}\langle \alpha_{(1)},D_{(1)}\rangle_1
 \langle \alpha_{(2)} ,D_{(2)}\rangle_2\\
&=D(\epsilon(\alpha))\qquad  \text{(Lemma \ref{ref-A.15-61})}
\end{align*}
and
\begin{align*}
  \langle D,1\!\!1_1\circ \epsilon(\alpha)\rangle_1&
  =\langle D\epsilon(\alpha),1\rangle_1\qquad \text{(Lemma \ref{ref-A.6-51})}\\
  &=\epsilon(D\epsilon(\alpha)) \qquad \text{(Lemma \ref{ref-A.7-54})}\\
  &=D(\epsilon(\alpha))
\end{align*}
The proof for \eqref{ref-A.7-64} is similar (one uses the cocommutativity
of $\mathrm U_R L$).
\end{proof}
Finally we verify:
\begin{lemma} One has $S^2=\Id_{{\mathrm J}_R L}$.
\end{lemma}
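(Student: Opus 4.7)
The plan is to reproduce the classical Hopf-algebra argument of \cite[Cor.\ 1.5.12]{Mont1}, adapted to the Hopf algebroid setting. Two preliminary facts are needed. First, since $S$ is an algebra morphism we have $S(1)=1$, and since $S$ exchanges the two $R$-actions we obtain $S(1\!\!1_1(r))=S(1\!\!1_1(r)\cdot 1)=1\!\!1_2(r)\cdot S(1)=1\!\!1_2(r)$; similarly $S\circ 1\!\!1_2=1\!\!1_1$, and consequently $S^2\circ 1\!\!1_1 = 1\!\!1_1$. Second, applying the algebra morphism $S$ to the antipode axiom $\sum_\alpha \alpha_{(1)}S(\alpha_{(2)})=(1\!\!1_1\circ\epsilon)(\alpha)$ yields the twisted identity
\[
\sum_\alpha S(\alpha_{(1)})\,S^2(\alpha_{(2)}) \;=\; (1\!\!1_2\circ\epsilon)(\alpha).
\]

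The heart of the proof is to evaluate the expression $\Sigma(\alpha):=\sum_\alpha \alpha_{(1)}\,S(\alpha_{(2)(1)})\,S^2(\alpha_{(2)(2)})$ in two different ways. On the one hand, by the twisted identity above applied to $\alpha_{(2)}$ in place of $\alpha$, followed by the counit axiom, one gets $\Sigma(\alpha)=\sum_\alpha \alpha_{(1)}(1\!\!1_2\circ\epsilon)(\alpha_{(2)}) = \alpha$. On the other hand, using coassociativity to resplit the iterated coproduct, $\Sigma(\alpha)=\sum_\alpha \alpha_{(1)(1)}\,S(\alpha_{(1)(2)})\,S^2(\alpha_{(2)})$, so the original antipode axiom applied to $\alpha_{(1)}$ collapses this to $\sum_\alpha (1\!\!1_1\circ\epsilon)(\alpha_{(1)})\,S^2(\alpha_{(2)})$. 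Since $S^2$ is an algebra morphism satisfying $S^2\circ 1\!\!1_1=1\!\!1_1$, the latter sum equals $S^2\bigl(\sum_\alpha (1\!\!1_1\circ\epsilon)(\alpha_{(1)})\,\alpha_{(2)}\bigr) = S^2(\alpha)$ by the counit axiom. Comparing the two evaluations gives $S^2(\alpha)=\alpha$.

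I do not expect any serious obstacle: the argument is entirely formal and rests only on items already established (coassociativity, the two counit axioms, the two antipode axioms, and the fact that $S$, hence $S^2$, is a straight algebra morphism—this being where commutativity of ${\mathrm J}_RL$ enters). The only point requiring a bit of care is the bookkeeping of the two unit maps $1\!\!1_1$, $1\!\!1_2$ and their interchange under $S$, which is the mild complication characteristic of the Hopf algebroid setting as opposed to the Hopf algebra one.
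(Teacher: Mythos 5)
Your proof is correct and is essentially the same as the paper's: both run the classical Montgomery-style argument, evaluating a three-fold expression built from the iterated coproduct in two ways using coassociativity, the counit and antipode axioms, and the fact that $S$ is an algebra morphism interchanging $1\!\!1_1$ and $1\!\!1_2$. The only difference is cosmetic — the paper computes $\sum_\alpha S^2(\alpha_{(1)})S(\alpha_{(2)})\alpha_{(3)}$ while you compute the mirror-image expression $\sum_\alpha \alpha_{(1)}S(\alpha_{(2)})S^2(\alpha_{(3)})$.
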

\begin{proof} The proof is based on the following computation. On the one hand
\[
\begin{aligned}
\sum_\alpha S^2(\alpha_{(1)})S(\alpha_{(2)})\alpha_{(3)}&=\sum_\alpha
S^2(\alpha_{(1)})(1\!\!1_2\circ\epsilon)(\alpha_{(2)})\\
&=\sum_\alpha S^2(\alpha_{(1)}(1\!\!1_2\circ\epsilon)(\alpha_{(2)}))\\
&=S^2(\alpha);
\end{aligned}
\]
and on the other hand
\[
\begin{aligned}
\sum_\alpha S^2(\alpha_{(1)})S(\alpha_{(2)})\alpha_{(3)}&=\sum_\alpha
S(S(\alpha_{(1)})\alpha_{(2)})\alpha_{(3)}\\
&=\sum_\alpha S((1\!\!1_2\circ\epsilon)(\alpha_{(1)}))\alpha_{(2)}\\
&=\sum_\alpha (1\!\!1_1\circ\epsilon)(\alpha_{(1)})\alpha_{(2)}\\
&=\alpha.
\end{aligned}
\]
We have used the coassociativity, the counit axioms and the fact that $S$ is
an algebra morphism which intertwines the actions $1\!\!1_1$ and $1\!\!1_2$ of $R$ on
${\mathrm J}_RL$.
\end{proof}

\begin{thebibliography}{10}

\bibitem{TLS} Alonso Tarr{\'{\i}}o, Leovigildo, Jerem{\'{\i}}as
              L{\'o}pez, Ana, and Souto Salorio, Mar{\'{\i}}a Jos{\'e} {\em
Localization in categories of complexes and unbounded
              resolutions
}, Canad. J. Math., {\bf 52} (2000), 225--247.

\bibitem{SGA41} M. Artin, Grothendieck, A.  and Verdier, J. L. {\em {Theorie des topos et
cohomologie \'etale des sch\'emas, {SGA4},
{T}ome 1},} Lecture Notes in Mathematics 305, Springer Verlag, 1973.

\bibitem{CDH}
D.~Calaque, V.~Dolgushev, and G.~Halbout, {\em Formality theorems for
  {H}ochschild chains in the {L}ie algebroid setting}, J. Reine Angew. Math.
  {\bf 612} (2007), 81--127.

\bibitem{CR2}
D.~Calaque and C.~A.~Rossi, {\em Compatibility with cap-products in {T}sygan's
  formality and homological {D}uflo isomorphism}, arXiv:0805.2409, 2008.

\bibitem{CRVdB}
D.~Calaque, C.~A.~Rossi, and M.~Van~den Bergh, {\em Caldararu's conjecture and
  {T}sygan's formality}, arXiv:0904.4890, 2009.

\bibitem{vdbcalaque}
D.~Calaque and M.~Van~den Bergh, {\em Hochschild cohomology and {A}tiyah
  classes}, to appear in Adv. Math., arXiv:0708.2725.

\bibitem{Caldararu2}
A.~Caldararu, {\em The {M}ukai pairing. {II}. {T}he
  {H}ochschild-{K}ostant-{R}osenberg isomorphism}, Adv. Math. {\bf 194} (2005),
  no.~1, 34--66.

\bibitem{DTT2}
V.~Dolgushev, D.~Tamarkin, and B.~Tsygan, {\em Formality of the homotopy
  calculus algebra of {H}ochschild (co)chains}, arXiv:0807.5117.

\bibitem{DTT1}
V.~Dolgushev, D.~Tamarkin, and B.~Tsygan, {\em The homotopy {G}erstenhaber
  algebra of {H}ochschild cochains of a regular algebra is formal}, J.
  Noncommut. Geom. {\bf 1} (2007), no.~1, 1--25.

\bibitem{Groth1} A.~Grothendieck, {\em Sur quelques points d'alg\'ebre homologique},
T{\^o}hoku Math. J. (2), {\bf 9} (1957), 119--221.

\bibitem{Kapranov3}
M.~Kapranov, {\em Free {L}ie algebroids and the space of paths}, Selecta Math.
  (N.S.) {\bf 13} (2007), no.~2, 277--319.

\bibitem{KP}
N.~Kowalzig and H.~Posthuma, {\em The cyclic theory of Hopf algebroids}, to appear in
JNCG, arXiv:0904.4736, 2009.

\bibitem{Mont1}
S.~Montgomery, {\em Hopf algebras and their actions on rings}, CBMS Regional
  Conference Series in Mathematics, vol.~82, 1993.

\bibitem{Rine}
G.~S. Rinehart, {\em Differential forms on general commutative algebras},
  Trans. Amer. Math. Soc. {\bf 108} (1963), 195--222.

\bibitem{Spaltenstein}
N.~Spaltenstein, {\em Resolutions of unbounded complexes}, Composition Math., {\bf 65},
(1988), 121--154.

\bibitem{Swan}
R.~G. Swan, {\em Hochschild cohomology of quasiprojective schemes}, J. Pure
  Appl. Algebra {\bf 110} (1996), no.~1, 57--80.


\bibitem{Xu}
P.~Xu, {\em Quantum groupoids}, Comm. Math. Phys. {\bf 216} (2001), no.~3,
  539--581.

\bibitem{ye2}
A.~Yekutieli, {\em The continuous {H}ochschild cochain complex of a scheme},
  Canad. J. Math. {\bf 54} (2002), no.~6, 1319--1337.

\bibitem{Y1}
A.~Yekutieli, private communication.

\end{thebibliography}
\def\cprime{$'$} \def\cprime{$'$} \def\cprime{$'$}
\ifx\undefined\bysame
\newcommand{\bysame}{\leavevmode\hbox to3em{\hrulefill}\,}
\fi

\end{document}